\newtheorem{theorem}{Theorem}[section]
\newtheorem{proposition}[theorem]{Proposition}
\newtheorem{lemma}[theorem]{Lemma}
\newtheorem{corollary}[theorem]{Corollary}
\theoremstyle{definition}
\newtheorem{definition}[theorem]{Definition}
\newtheorem{remark}[theorem]{Remark}
\newcommand{\aaa}{\alpha}
\newcommand{\bbb}{\beta}
\newcommand{\ccc}{\gamma}
\newcommand{\CCC}{\Gamma}
\newcommand{\DDD}{\Delta}
\newcommand{\id}{{\rm{id}}}
\newcommand{\eee}{\epsilon}
\newcommand{\CP}{\mathbb{CP}}
\newcommand{\PP}{\mathbb{P}}
\newcommand{\CC}{\mathbb{C}}
\newcommand{\RR}{\mathbb{R}}
\newcommand{\ZZ}{\mathbb{Z}}
\newcommand{\QQ}{\mathbb{Q}}
\newcommand{\SL}{{\rm{SL}}}
\newcommand{\Sing}{{\rm{Sing}\,}}
\newcommand{\qdr}{\CP_1\times\CP_1}
\newcommand{\mf}{\mathfrak}
\newcommand{\ol}{\overline}
\newcommand{\lras}{\,\longrightarrow\,}
\newcommand{\set}{\,|\,}
\newcommand{\inv}{^{-1}}
\newcommand{\minus}{\backslash}
\newcommand{\ptl}{\partial}
\newcommand{\reg}{{\rm{reg}}}
\renewcommand{\tilde}{\widetilde}
\DeclareMathOperator{\Ker}{Ker}
\numberwithin{equation}{section}
\begin{document}
\title{Fibrations on the 6-sphere and Clemens threefolds}
\author{Nobuhiro Honda}\address{Department of Mathematics, Tokyo Institute of Technology, Tokyo, Japan}\email{honda@math.titech.ac.jp}
\author{Jeff Viaclovsky}\address{Department of Mathematics, University of California, Irvine, USA}\email{jviaclov@uci.edu}
\date{August 14, 2024}
\begin{abstract} 
Let $Z$ be a compact, connected $3$-dimensional complex manifold with vanishing first and second Betti numbers and non-vanishing Euler characteristic. We prove that there is no holomorphic mapping from $Z$ onto any $2$-dimensional complex space. In other words, $Z$ can only possibly fiber over a curve. This result applies in particular to a class of threefolds, known as Clemens threefolds, which are diffeomorphic to a connected sum $k \# (S^3 \times S^3)$ for $k \geq 2$.  This result also gives a new restriction on any hypothetical complex structure on the $6$-sphere $S^6$. 
\end{abstract}
\maketitle

\vspace{-7mm}
\section{Introduction}
Let $Z$ and $Y$ be irreducible complex spaces with $1 \leq \dim(Y) < \dim(Z)$.  A \textit{fibration} $f: Z \rightarrow Y$ is a proper surjective holomorphic mapping. We say that $Z$ \textit{fibers} over $Y$ and call $Y$ the \textit{base} of the fibration.
Note that we do not assume the fibers are connected nor that the fibers are smooth. Also, fibers with dimension greater than the codimension $\dim(Z) - \dim(Y)$ are allowed. 
Let $b_j(Z)$ denote the $j$th Betti number of $Z$. Our main result rules out fibrations on certain non-Moishezon $3$-folds with a $2$-dimensional base.
\begin{theorem}\label{thm:main1}
Let $Z$ be a compact, connected $3$-dimensional complex manifold satisfying  $b_1(Z) = b_2(Z)=0$ and $\chi_{\rm top}(Z)\neq 0$. If $f : Z \rightarrow Y$ is a fibration, then $Y$ is a curve. In other words, $Y$ cannot be a surface. 
\end{theorem}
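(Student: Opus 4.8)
The plan is to argue by contradiction: suppose $f\colon Z\to Y$ is a fibration onto a normal surface $Y$ (we may normalize $Y$ without affecting the hypotheses, since normalization is finite and the composition is still proper surjective). The first step is to extract topological consequences from the hypotheses $b_1(Z)=b_2(Z)=0$. By Poincar\'e duality and the universal coefficient theorem, $b_5(Z)=b_1(Z)=0$ and $b_4(Z)=b_2(Z)=0$, so the only possibly nonzero Betti numbers are $b_0=b_6=1$ and $b_3$; hence $\chi_{\rm top}(Z)=2-b_3$, and $\chi_{\rm top}(Z)\neq 0$ forces $b_3\neq 2$. In particular $H^2(Z;\QQ)=0$, so every line bundle on $Z$ is numerically trivial against curves, and more importantly $Z$ carries no nonzero closed $(1,1)$-form classes — this is the key rigidity we will exploit.

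Next I would pull back on $Y$ a suitable positive object. Since $Y$ is a surface, a generic hyperplane-type divisor (or, if $Y$ is not projective, a Moishezon/positive current argument) gives a curve $C\subset Y$ whose preimage $f^{-1}(C)$ is a divisor $D\subset Z$; more robustly, one takes a K\"ahler current or a pseudoeffective $(1,1)$-class on $Y$ and pulls it back. The point is to produce on $Z$ a nonzero class in $H^2(Z;\QQ)$ or a nonzero $(1,1)$-cohomology class, contradicting $b_2(Z)=0$. The subtlety is that $f^{-1}(C)$ could a priori be homologically trivial, so one must instead use two generic curves $C_1,C_2$ on $Y$ meeting transversally: $f^{-1}(C_1)\cap f^{-1}(C_2)$ is then a nonempty (by surjectivity) compact analytic set of dimension $\ge 1$, i.e. contains a compact curve $\Sigma\subset Z$, and $f^{-1}(C_1)$ restricted to $\Sigma$ should have positive degree. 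To make the homology contradiction clean, one wants $[f^{-1}(C_1)]\cdot[\Sigma]>0$ in $Z$, which would force $[f^{-1}(C_1)]\neq 0$ in $H_4(Z;\QQ)=0$. That intersection positivity is where a genuine argument is needed, since $f$ need not be flat and fibers can jump in dimension.

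The main obstacle is controlling the fibers of $f$ and the local structure of $f$ near the critical locus: fibers over a surface base are generically curves but may contain divisorial components over subvarieties of $Y$, and $Z$ being non-K\"ahler means one cannot simply integrate a K\"ahler form over cycles. I expect the real work to be: (i) showing that after possibly shrinking/modifying, the generic fiber is a connected or at least effective compact curve carrying a nonzero class in $H_2(Z;\QQ)$ — but $H_2(Z;\QQ)=0$ too, so in fact the generic fiber is already null-homologous, which must be reconciled with $\chi_{\rm top}(Z)\neq 0$ via a fibration Euler characteristic formula. Indeed, the cleanest route may be: for a fibration $f\colon Z\to Y$ with $Y$ a surface, the Euler characteristic satisfies $\chi_{\rm top}(Z)=\chi_{\rm top}(Y)\cdot\chi_{\rm top}(F_{\rm gen})+(\text{correction terms supported over the discriminant})$, and one argues each term vanishes. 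Concretely: the generic fiber $F$ is a compact complex curve, so $\chi_{\rm top}(F)=2-2g$; one shows the multiplicative part together with the correction from singular fibers (which are themselves analytic sets over curves in $Y$, contributing in a controlled way) must total zero, using that all the relevant homology classes in $Z$ vanish because $b_2(Z)=b_4(Z)=0$. That forces $\chi_{\rm top}(Z)=0$, the desired contradiction.

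I would therefore organize the proof as: (1) topology of $Z$: reduce to $b_2=b_4=0$ and $\chi_{\rm top}=2-b_3$; (2) normalize $Y$ and analyze the fibration, identifying the generic fiber as a curve and the discriminant locus $\Delta\subset Y$ as a proper analytic subset; (3) a Leray/stratified Euler-characteristic computation, $\chi_{\rm top}(Z)=\chi_{\rm top}(Y\setminus\Delta)\,\chi_{\rm top}(F)+\sum_{\text{strata of }\Delta}\chi_{\rm top}(\text{stratum})\,\chi_{\rm top}(\text{fiber over it})$; (4) show every fiber's Euler characteristic that actually enters is forced, via $b_2(Z)=b_4(Z)=0$ and a careful look at which cycles in $Z$ the fibers and their components represent, to combine to $0$. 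Step (3)–(4), in particular ruling out that the discriminant contributions rescue a nonzero total, is the crux; the homological vanishing on $Z$ is the lever that makes it work, and the delicate point is handling fibers of excess dimension (divisorial fibers over curves in $Y$) without a K\"ahler form.
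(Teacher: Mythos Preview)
Your proposal is not a proof; the crucial step (4) is asserted but not argued, and in fact the mechanism you suggest does not work. You correctly observe that $b_2(Z)=0$ forces the generic fiber $F$ to be an elliptic curve (this is the paper's Proposition~\ref{prop:b1v}(i), via adjunction), so $\chi_{\rm top}(F)=0$ and the main term in your stratified formula vanishes. But the discriminant contributions do \emph{not} vanish by any general principle tied to $b_2(Z)=b_4(Z)=0$. A singular fiber over a curve in $Y$ can be, say, a rational curve with $\chi=2$, and the stratum can have any Euler characteristic; the fact that every divisor and every curve in $Z$ is null-homologous puts no direct constraint on these \emph{numerical} contributions. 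Your sentence ``the homological vanishing on $Z$ is the lever that makes it work'' is precisely where the argument is missing, and I do not see how to supply it. Note also that the Calabi--Eckmann example $Z_1=S^3\times S^3\to\PP^1\times\PP^1$ shows the hypotheses $b_1=b_2=0$ alone are consistent with a surface base; the paper's argument must (and does) use $\chi_{\rm top}\neq 0$ in a much more delicate way than as the final punchline of an Euler-characteristic count.

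Your earlier idea of pulling back curves from $Y$ also fails for a structural reason you did not anticipate: the surface $Y$ need not carry any ample or movable divisor at all. The paper's first substantive step (Proposition~\ref{thm:main2}) invokes a nontrivial theorem of Campana--Demailly--Peternell and Lehn--Rollenske--Schinko to show that $a(Y)\le 1$; in particular $Y$ is never Moishezon, and when $a(Y)=0$ it is bimeromorphic to a non-algebraic K3 surface carrying only finitely many curves, all with negative-definite intersection matrix. There are no ``two generic curves meeting transversally'' to intersect. The actual proof then splits into the cases $a(Y)=0$ and $a(Y)=1$, and in each case carries out a rather intricate comparison of Betti numbers: the Leray spectral sequence for $f$ gives $b_2(Y)=\dim H^0(Y,R^1f_*\RR)$ and an upper bound for $b_2(Y)$ in terms of the singularities of $Y$ and the discriminant, while surface classification (K3 in one case, elliptic over $\PP^1$ in the other) gives a lower bound for $b_2(Y)$; these are shown to be incompatible. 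None of this structure is visible in your sketch.
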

There are many interesting known examples of complex $3$-folds with $b_1(Z) = b_2(Z) = 0$; see for example \cite{CDP98, LRS18}. Some other examples are given by 
$Z_k := k\#(S^3\times S^3)$,
which is the $k$-fold connected sum of the product 6-manifold $S^3\times S^3$ for any $k\ge 1$. Then $Z_k$ satisfies $b_1(Z_k) =b_2 (Z_k) =0$ and $\chi_{\rm top}(Z_k) = 2 - 2k$. Certain complex structures on $Z_k$ for $k \geq 2$ are known as Clemens complex structures; see \cite{Clemens, Friedman, FuLiYau, LuTian1996}. Theorem~\ref{thm:main1} applies in particular to any complex manifold homeomorphic to $Z_k$, $k \geq 2$. Note that $Z_1=S^3 \times S^3$ satisfies $\chi_{\rm top}(Z_1)= 0$, and admits a Calabi-Eckmann complex structure for which there exists a surjective holomorphic mapping $Z_1\rightarrow\qdr$; see \cite{CalabiEckmann}. 

\begin{remark}
\label{r:s6}
The existence of a complex structure on the six-dimensional sphere $S^6$ is an open problem, sometimes referred to as the Hopf Problem. We do not attempt to give a complete history of this problem here, but instead refer the reader to the survey \cite{A18} and the references therein. 
Some restrictions on a hypothetical complex structure on $S^6$ are known; see for example \cite{Angella, CDP98, CDP20, Gray97, HKP, LeBrunS6, LRS18, Ugarte}. 
Theorem~\ref{thm:main1} also gives a new restriction on any hypothetical complex structure on $S^6$.  
\end{remark}

\subsection{Outline of the paper}

In Section~\ref{s:constraints} we prove some analytic and topological constraints on $Y$.   
In Subsection~\ref{ss:Pre}, we begin with the elementary observation that one can assume that $f : Z \rightarrow Y$ has connected fibers, and that the base $Y$ is reduced, irreducible and normal.  
Using \cite[Theorem~2.1]{CDP20} and \cite[Theorem~3.1]{LRS18}, we conclude that $a(Y) \leq 1$, where $a(Y)$ denotes the algebraic dimension of $Y$; see Proposition~\ref{thm:main2}. 
In Subsection~\ref{ss:topology}, we obtain some topological constraints. 
Proposition~\ref{prop:b1v} shows that the assumption $b_2(Z) = 0$ implies that the fibration must have generic fiber an elliptic curve, and the assumption $b_1(Z) = 0$ implies that any desingularization of the base $Y$ is K\"ahler. Proposition~\ref{p:b2bdd0} is one of our main tools; we use the 5 term exact sequence associated to the Leray spectral sequence for $f$ to obtain a crucial bound on the second Betti number of $Y$.  In Subsection~\ref{ss:sing}, we define and discuss the classes of singularities with which we must deal in the subsequent sections: \textit{rational tree}, \textit{unicyclic}, and \textit{elliptic tree singularities}. 
 
In Section \ref{s:a0case}, we consider the case that $a(Y) = 0$. In this case, Proposition \ref{prop:K3} shows that $Y$ must be bimeromorphic to a K3 surface. Corollary \ref{c:cK3} shows that the only possible singularities of $Y$ are rational tree singularities. Our main estimate on the second Betti number of $Y$ is given in Proposition~\ref{p:b2bdd}. This is used in Proposition \ref{p:aneq0} to rule out this case. 

In Section~\ref{s:elliptic}, we prove a number of technical results about normal elliptic surfaces, which all rely on the foundational results of Kodaira on elliptic fibrations \cite{Kodaira1963}. In particular, we will show that in the case $a(Y) = 1$, then $Y$ can only have rational tree, unicyclic, or elliptic tree singularities.  In Proposition~\ref{p:b2}, we obtain a lower bound on the second Betti number of $Y$.  In Proposition~\ref{p:Clemens}, 
we prove a result about the existence of a strong deformation retraction of a neighborhood of a singular fiber of a fibration on a surface, using some well-known results in the case where the surface is non-singular. Next, we prove a technical result which will be needed in the subsequent section; see Lemma~\ref{l:b2rel}. 

In Section \ref{s:a1case}, we then consider the case that $a(Y) = 1$. This puts a strong restriction on $Y$; it must be elliptic over $\PP^1$; see Proposition~\ref{p:ell}. After a lot of work, we then obtain an important estimate for a relative Betti number involving the number of singularities of each type; see Corollary~\ref{cor:b2relY}. The estimates in Proposition~\ref{p:b2} and Corollary~\ref{cor:b2relY} are sufficiently sharp so that when combined with Proposition~\ref{p:b2bdd0}, our main result follows; see Proposition~\ref{p:aneq1}. 

\subsection{Acknowledgements} The first author was partially supported by JSPS KAKENHI Grant 22K03308. The second author was partially supported by NSF Grants DMS-2105478, DMS-2404195, and a Fellowship from the Simons Foundation.

\section{Constraints on $Y$}
\label{s:constraints}
In this Section, we will prove some analytical and topological constraints on $Y$. We will also discuss the constraints on the possible singularities of $Y$. 
\subsection{Preliminaries}
\label{ss:Pre}
First, let $Z$ be a compact connected complex manifold, and $f : Z \rightarrow Y$ a surjective holomorphic mapping to an irreducible complex space $Y$. 
The mapping $f$ induces a mapping from $Z$ to the reduction of $Y$ \cite[p.~88]{GR}, so we may suppose that $Y$ is reduced. If  $\nu : Y' \rightarrow Y$ is the normalization of $Y$, then $f$ lifts to a mapping from $Z$ into $Y'$, so we may assume that $Y$ is normal \cite[p.~164, Proposition]{GR}.
Next, using Stein factorization, we may assume that $f$ has connected fibers \cite[p.~213]{GR}. 
Hence, to prove Theorem \ref{thm:main1}, we may assume that the mapping $f$ has connected fibers and the space $Y$ is reduced, irreducible and normal. 

We next recall the following important result, which is proved in \cite[Theorem~2.1]{CDP20} and \cite[Theorem~3.1]{LRS18}.
\begin{theorem}[\cite{CDP20, LRS18}]
\label{thm:CDP}
Let $Z$ be a 3-dimensional connected compact complex manifold with $b_2(Z) = 0$. Assume that there exists a non-holomorphic meromorphic mapping $g: Z \dashrightarrow \PP^1$. Then 
$\chi_{\rm top}(Z)=0$.
\end{theorem}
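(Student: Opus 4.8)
The plan is to replace the meromorphic map by a genuine holomorphic fibration and then to compute $\chi_{\rm top}(Z)$ in two ways. \emph{Step 1: resolving the indeterminacy.} By Hironaka's theorem there is a modification $\pi\colon\hat Z\to Z$, a finite composition of blow-ups with smooth centres all lying over the indeterminacy locus $\Sigma$ of $g$, such that $\hat g:=g\circ\pi\colon\hat Z\to\PP^1$ is holomorphic, hence proper and surjective. Since $g$ is \emph{not} holomorphic, $\Sigma\neq\emptyset$; and since the target is a curve, $\Sigma$ has codimension $\ge2$, so $\dim_{\CC}\Sigma\le1$. After deleting the fixed part, $g$ is the rational map attached to a pencil $P\subset|L|$ with base locus $\Sigma$; a general member $D\in P$ is an effective divisor on $Z$, and for general $t\in\PP^1$ the fibre $\hat F:=\hat g^{-1}(t)$ is the strict transform of $D$, so that $\pi$ restricts to a bimeromorphic morphism $\hat F\to D$ of compact complex surfaces.

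\emph{Step 2: two Euler characteristic identities.} The blow-up formula gives
$$\chi_{\rm top}(\hat Z)=\chi_{\rm top}(Z)+\kappa,$$
where $\kappa$ is an explicit sum, one term per blow-up, of Euler characteristics of the (smooth, at most $1$-dimensional) centres, all supported over $\Sigma$. On the other hand the Euler characteristic formula for the holomorphic fibration $\hat g$ over the curve $\PP^1$ reads
$$\chi_{\rm top}(\hat Z)=2\,\chi_{\rm top}(\hat F)+\sum_{p\in\PP^1}\Big(\chi_{\rm top}\big(\hat g^{-1}(p)\big)-\chi_{\rm top}(\hat F)\Big),$$
the sum being finite. (Equivalently, writing $U:=Z\setminus\Sigma\cong\hat Z\setminus\pi^{-1}(\Sigma)$, one has $\chi_{\rm top}(Z)=\chi_{\rm top}(U)+\chi_{\rm top}(\Sigma)$, while $g|_U\colon U\to\PP^1$ is topologically a locally trivial fibre bundle over the complement of a finite subset of $\PP^1$, with general fibre $D\setminus\Sigma$; stratified additivity of $\chi_{\rm top}$ then expresses $\chi_{\rm top}(U)$ through $\chi_{\rm top}(D)$, $\chi_{\rm top}(\Sigma)$ and the special fibres.)

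\emph{Step 3: using $b_2(Z)=0$.} By Poincar\'e duality $b_4(Z)=0$ as well, so $H^2(Z;\QQ)=H^4(Z;\QQ)=0$: every divisor class on $Z$ is rationally trivial and every intersection number vanishes. In particular $[D]=0$ in $H^2(Z;\QQ)$, hence $[D]|_D=0$ in $H^2(D;\QQ)$, and adjunction gives $K_D^2=(K_Z|_D)^2=K_Z^2\cdot[D]=0$; Noether's formula then forces $\chi_{\rm top}(D)=c_2(T_D)=12\,\chi(\mathcal{O}_D)$, determined by $\chi(\mathcal{O}_D)$ alone. Dually, $\pi^*H^2(Z;\QQ)=0$, so $H^2(\hat Z;\QQ)$ is spanned by the classes of the $\pi$-exceptional divisors; and because $\Sigma\neq\emptyset$, the resolution must produce at least one exceptional divisor dominating $\PP^1$ under $\hat g$, i.e.\ a horizontal exceptional divisor. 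One then feeds these homological constraints into the two identities of Step~2, keeping track of the reducible and multiple fibres of $\hat g$ and of the way each exceptional divisor meets the fibres, and concludes that all correction terms cancel, so that $\chi_{\rm top}(Z)=0$.

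I expect the main obstacle to be exactly this last reconciliation: controlling the singular fibres of $\hat g$, their multiplicities, and their intersections with the exceptional set of $\pi$ precisely enough to force the cancellation. This is where the structure theory of non-K\"ahler threefolds with $b_2=0$ should be invoked --- in particular the fact that such a $Z$ has algebraic dimension $a(Z)\le1$ with a well-understood algebraic reduction --- which is the substance of the arguments in \cite{CDP20, LRS18}.
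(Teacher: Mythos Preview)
The paper does not prove this theorem: it is quoted verbatim as a known result from \cite{CDP20} and \cite{LRS18}, with no argument given in the present paper. So there is no in-paper proof to compare against; one can only ask whether your sketch stands on its own.

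It does not. Steps~1 and~2 are standard and essentially correct, but Step~3 is where the content lies, and you explicitly stop short of carrying it out. You assert that after ``feeding these homological constraints into the two identities of Step~2 \dots\ all correction terms cancel,'' and then immediately concede that this reconciliation is ``the main obstacle'' and that it ``is the substance of the arguments in \cite{CDP20, LRS18}.'' That is a plan, not a proof: the entire difficulty of the theorem is the cancellation you have declared rather than demonstrated.

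There are also technical gaps in the setup of Step~3. A general member $D$ of the pencil need not be smooth, since the base locus $\Sigma$ is nonempty and the linear system is not free; hence Noether's formula is not directly available on $D$, and you would have to work on the smooth strict transform $\hat F$ and track what the partial resolution $\hat F\to D$ does to the invariants. More seriously, the cancellation you want is not formal: the $\pi$-exceptional divisors contribute both to $\chi_{\rm top}(\hat Z)-\chi_{\rm top}(Z)$ and to the singular fibres of $\hat g$, and matching these two contributions requires genuine control over which exceptional components are horizontal for $\hat g$, their multiplicities in fibres, and how the algebraic reduction of $Z$ constrains the picture. That control is exactly what \cite{CDP20} and \cite{LRS18} supply, and it cannot be replaced by the observation that $b_2(Z)=0$ kills intersection numbers on $Z$.
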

Next, for a reduced irreducible compact complex space $X$, we let $a(X)$ denote the algebraic dimension of $X$, which is the transcendence degree of the field $\CC(X)$ of meromorphic functions on $X$ over $\CC$.  We refer the reader to \cite[Section~3]{U75} for basic properties of the algebraic dimension. Theorem~\ref{thm:CDP} immediately excludes the possibility that the space $Y$ satisfies $\dim Y=2$ and $a(Y)=2$:
\begin{proposition}\label{thm:main2}
Let $Z$ be as in Theorem \ref{thm:main1} and $Y$ a reduced irreducible normal compact complex surface. Suppose that there exists a surjective holomorphic mapping $f:Z\to Y$. Then $a(Y)<2$.
\end{proposition}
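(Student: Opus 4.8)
The plan is to argue by contradiction. Suppose $a(Y)=2$. I will produce a meromorphic mapping $g\colon Z\dashrightarrow\PP^1$ that is \emph{not} holomorphic; since $b_2(Z)=0$, Theorem~\ref{thm:CDP} then forces $\chi_{\rm top}(Z)=0$, contradicting the hypothesis $\chi_{\rm top}(Z)\neq 0$. As a surface has algebraic dimension at most $2$, this yields $a(Y)\le 1$.

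To construct $g$, I would first use that a normal compact complex surface of algebraic dimension $2$ is Moishezon, hence projective, and fix an embedding $Y\hookrightarrow\PP^N$. For two general hyperplanes $H_0,H_1$ with linear forms $\ell_0,\ell_1$, put $h:=(\ell_0/\ell_1)|_Y\in\CC(Y)$, a non-constant meromorphic function. By Bertini's theorem on the smooth locus $Y_{\reg}$, for general $H_0,H_1$ the intersection $Y\cap H_0\cap H_1$ is a nonempty finite set of smooth points of $Y$ at which $\ell_0,\ell_1$ cut $Y$ transversally, so that $u:=\ell_0$ and $v:=\ell_1$ restrict to local coordinates on $Y$ there and $h=u/v$ locally. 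In addition, since $f$ is proper and surjective with $\dim Z-\dim Y=1$, the locus $S=\{p\in Y:\dim f^{-1}(p)\ge 2\}$ is an analytic subset of dimension at most $1$, so one may further arrange that $Y\cap H_0\cap H_1$ is disjoint from $S$; fix a point $p$ in this intersection.

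Now set $g:=f^{*}h\in\CC(Z)$. In a neighborhood of $f^{-1}(p)$ one has $g=\widetilde u/\widetilde v$ with $\widetilde u=u\circ f$, $\widetilde v=v\circ f$, and there $\{\widetilde u=\widetilde v=0\}$ equals $f^{-1}(p)$, which is nonempty (as $f$ is surjective) and of pure dimension $1$ (as $p\notin S$, every fiber having dimension at least $\dim Z-\dim Y=1$). Hence at each $z\in f^{-1}(p)$ the germs $\widetilde u,\widetilde v$ have no common factor in $\mathcal{O}_{Z,z}$ — a common factor would define a divisor contained in the $1$-dimensional set $f^{-1}(p)$ — while both vanish at $z$. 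Consequently $g$ has a genuine point of indeterminacy at $z$ and does not extend to a holomorphic map $Z\to\PP^1$, and Theorem~\ref{thm:CDP} applies.

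The only step requiring real care — though it is elementary — is the verification that $g=f^{*}h$ genuinely fails to be holomorphic, i.e.\ that pulling back by $f$ introduces no cancellation between numerator and denominator. This is exactly why $p$ is chosen outside the locus $S$ of two-dimensional fibers (so $f^{-1}(p)$ contains no divisor) and at a transverse intersection point (so $u,v$ are coprime local coordinates on $Y$). Everything else — that a two-dimensional Moishezon space is projective, the genericity of Bertini sections, and the estimate $\dim S\le\dim Z-2$ — is standard.
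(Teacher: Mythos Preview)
Your argument is correct. Both your proof and the paper's reach the contradiction via Theorem~\ref{thm:CDP}, but they manufacture the non-holomorphic meromorphic map $g:Z\dashrightarrow\PP^1$ differently. The paper first passes to $Z$: from $a(Z)\ge a(Y)=2$ and the fact that a Moishezon threefold has $b_2>0$ it deduces $a(Z)=2$, and then quotes the general statement (from \cite{CDP20}) that any compact complex manifold with algebraic dimension~$2$ carries a non-holomorphic meromorphic map to $\PP^1$. You instead stay on $Y$: using that a normal compact Moishezon surface is projective, you pull back a generic pencil of hyperplane sections and verify by a local coprimality argument that the indeterminacy survives along a one-dimensional fiber $f^{-1}(p)$. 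Your route is more explicit and bypasses both the case split on $a(Z)$ and the cited black-box about manifolds with $a=2$; the paper's route is shorter because it outsources the construction of $g$ to the literature. One small remark: your bound $\dim S\le\dim Z-2=1$ is correct and sufficient, but in fact $\dim S\le 0$ here, since a one-dimensional $S$ would force $f^{-1}(S)=Z$ and contradict surjectivity onto the surface $Y$.
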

\begin{proof}
Since $f$ is surjective, $a(Z)\ge a(Y)$. So if $a(Y)=2$, then $a(Z)\ge 2$.
Since any Moishezon manifold $X$ satisfies $b_{2}(X)>0$ (see for example \cite[Lemma 1.4]{CDP98})), this means $a(Z)=2$ under the present assumption $b_2(Z)=0$. But as used in the proof of \cite[Corollary~2.3]{CDP20},
any compact complex manifold $X$ satisfying $a(X) = 2$ has 
a non-holomorphic meromorphic mapping $g:X\dashrightarrow \PP^1$.
Hence there would exist a non-holomorphic meromorphic mapping $g:Z\dashrightarrow \PP^1$ if $a(Z)=2$.
By Theorem \ref{thm:CDP}, this means $\chi_{\rm top}(Z)=0$, which contradicts our assumption. Hence $a(Y)\neq 2$. 
\end{proof}

\subsection{Topological constraints}
\label{ss:topology}
In this section, we prove several topological constraints on~$Y$. First, we recall some details regarding the Leray spectral sequence. 
Let $M$ and $N$ be any topological spaces, and  $f : M \rightarrow N$ a continuous mapping. The Leray spectral for the constant sheaf $\RR$ on $M$ satisfies 
\begin{align}
E_2^{p,q} = H^p(N,R^qf_*\RR) \Longrightarrow H^{p+q}(M,\RR);
\end{align}
see \cite[Theorem~F.5]{GunningIII}. 
The associated $5$ term exact sequence is
\begin{align}
0 \lras E_2^{1,0} \lras H^1(M,\RR) \lras E_2^{0,1} 
\overset{d_2}{\lras} E_2^{2,0} \lras H^2(M, \RR);
\end{align}
see \cite[Chapter~5]{Weibel}.
In case the fibers of $f$ are connected, then $f_* \RR \simeq \RR$, and this sequence becomes 
\begin{align}
\label{Leray5}
0 \lras H^1(N, \RR) \overset{f^*}{\lras} H^1(M,\RR) \lras H^0(N, R^1 f_* \RR) 
\overset{d_2}{\lras} H^2(N,\RR) \overset{f^*}\lras H^2(M, \RR).
\end{align}

For a reduced complex space $Y$, we denote $\Sing Y$ and $Y_{\reg}$ for the singular locus and smooth locus of $Y$, respectively. There exists a thin analytic subset $\mathscr{D} \subset Y$ with $\Sing Y \subset \mathscr{D}$,  called the {\textit{discriminant locus}} of $f$, such that the restriction 
\begin{align}\label{fb1}
Z\setminus f\inv (\mathscr{D}) \stackrel{f}\lras Y\setminus  \mathscr{D}
\end{align}
is a smooth submersion; see  \cite[Chapter~L]{GunningII} or \cite{Peternell_Remmert}.
By Ehresmann's fibration theorem the mapping \eqref{fb1} is a fiber bundle mapping of smooth manifolds; see \cite{Eh51}.  
Note that since $\mathscr{D}$ is an analytic subset of $Y$ and $Y$ is irreducible, 
the complement $Y\minus \mathscr{D}$ is connected by \cite[p.~145]{GR}. Hence, all fibers of the mapping \eqref{fb1} are mutually diffeomorphic.

We will use the following proposition which gives constraints for the topology of general fibers of $f:Z\rightarrow Y$ and $Y$. 
\begin{proposition}\label{prop:b1v}
Let $f : Z \rightarrow Y$ be a surjective holomorphic mapping with $Z$ a compact complex threefold, and $Y$ a reduced irreducible normal compact complex surface. If $f$ has connected fibers, then the following hold. 
\begin{itemize}
\item [(i)] If $b_2(Z)=0$, then any fiber of the submersion \eqref{fb1} is an elliptic curve.
\item [(ii)] If $b_1(Z)=0$, then $b_1(Y)=b_1(\tilde Y)=0$, where $\tilde Y$ is any desingularization of $Y$.
In particular, $\tilde Y$ admits a K\"ahler metric.
\end{itemize}
\end{proposition}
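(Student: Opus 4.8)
The two parts of the proposition call for quite different arguments, and I would handle them separately: for (i) an adjunction computation on a single fiber, for (ii) pulling $f$ back along a resolution of $Y$ and applying the five term sequence \eqref{Leray5} twice.

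\emph{Part (i).} Let $y_0\in Y\setminus\mathscr{D}$ be arbitrary. Then $Y$ is smooth at $y_0$ and, by \eqref{fb1}, $f$ is a holomorphic submersion near $F:=f^{-1}(y_0)$, so $F$ is a smooth connected compact complex curve of some genus $g$, and all fibers of \eqref{fb1} are diffeomorphic to $F$. Because $F$ is a fiber of a submersion, the differential $df$ identifies the normal bundle $N_{F/Z}$ with the trivial bundle $\mathcal{O}_F\otimes_{\CC}T_{y_0}Y$, hence $\det N_{F/Z}\cong\mathcal{O}_F$. Adjunction then gives $K_F\cong\big(K_Z\otimes\det N_{F/Z}\big)\big|_F\cong K_Z|_F$, so $2g-2=\deg K_F=\deg(K_Z|_F)$. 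Since $b_2(Z)=0$, the class $c_1(K_Z)$ vanishes in $H^2(Z,\RR)$, hence so does its restriction to $F$ in $H^2(F,\RR)$; thus $\deg(K_Z|_F)=0$, $g=1$, and every fiber of \eqref{fb1} is an elliptic curve. The only point needing care is the triviality of $\det N_{F/Z}$, which is immediate once $f$ is a submersion along $F$, so I expect this part to be routine.

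\emph{Part (ii).} That $b_1(Y)=0$ is immediate from \eqref{Leray5}: since $f$ has connected fibers, it provides an injection $H^1(Y,\RR)\hookrightarrow H^1(Z,\RR)=0$. For a desingularization, fix any resolution $\pi\colon\tilde Y\to Y$ and let $\tilde Z$ be a resolution of singularities of the principal component of $Z\times_Y\tilde Y$; equivalently, resolve the indeterminacy of the meromorphic map $\pi^{-1}\circ f\colon Z\dashrightarrow\tilde Y$. This produces a modification $\mu\colon\tilde Z\to Z$ of smooth compact threefolds together with a surjective holomorphic map $\tilde f\colon\tilde Z\to\tilde Y$, equal to $\pi^{-1}\circ f\circ\mu$ off the exceptional loci. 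Since $Z$ is smooth, the image $B\subset Z$ of the exceptional locus of $\mu$ has codimension $\ge 2$, and $\mu$ restricts to an isomorphism $\tilde Z\setminus\mu^{-1}(B)\xrightarrow{\sim}Z\setminus B$; as deleting a closed analytic subset of codimension $\ge 2$ does not affect $H^1$, while $\tilde Z$ differs from $\tilde Z\setminus\mu^{-1}(B)$ only by a proper analytic subset (so $H^1(\tilde Z)\hookrightarrow H^1(\tilde Z\setminus\mu^{-1}(B))$), we get $b_1(\tilde Z)\le b_1(Z)=0$. Moreover $\tilde f$ has connected fibers: it does over a dense open set because $f$ does and $\pi$ is an isomorphism over $Y\setminus\Sing Y$, so its Stein factor is finite and birational over the smooth surface $\tilde Y$ and hence equals $\tilde Y$. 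Applying \eqref{Leray5} to $\tilde f$ yields $H^1(\tilde Y,\RR)\hookrightarrow H^1(\tilde Z,\RR)=0$, so $b_1(\tilde Y)=0$; since $b_1(\tilde Y)$ is in particular even, $\tilde Y$ is K\"ahler, because a compact complex surface with even first Betti number carries a K\"ahler metric.

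The main obstacle in (ii) is the bookkeeping around the fiber product: one needs that the image of the exceptional locus of a modification of a smooth space has codimension $\ge 2$, the resulting comparison $b_1(\tilde Z)=b_1(Z)$, and the fact that $\tilde f$ may be taken to have connected fibers. Each of these is standard, but they must be assembled carefully. I note that the hypothesis $b_1(Z)=0$ enters here only through $\tilde Z$, and genuinely so: a general normal surface with $b_1=0$ can have $b_1(\tilde Y)>0$ (for instance a projective cone over an elliptic curve), so the conclusion really relies on the existence of the threefold $Z$ fibering over $Y$.
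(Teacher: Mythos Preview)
Your proof is correct and follows the same strategy as the paper's: adjunction for (i), and for (ii) building a commutative square with a bimeromorphic $\tilde Z\to Z$ and a surjection $\tilde f\colon\tilde Z\to\tilde Y$, then applying the five-term sequence to $\tilde f$. The only differences are in the justification of the auxiliary facts: where the paper cites Kawai's theorem that a bimeromorphic holomorphic map of compact complex manifolds induces an isomorphism on $H_1$ and Ueno's lemma for connected fibers, you supply direct arguments via the codimension of the exceptional image and Stein factorization over the normal target~$\tilde Y$---both are standard and yield the same conclusion.
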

\begin{proof}
Let $C$ be a generic fiber of \eqref{fb1}, which obviously has trivial normal bundle.  
Since $b_2(Z)=0$, any smooth complex line bundle over $Z$ is a torsion bundle. In particular, so is the canonical bundle $K_Z$. Therefore, from the adjunction formula, $K_C\simeq K_Z|_C$ is also a torsion bundle. Because $C$ is a compact connected Riemann surface, $H^2(C,\ZZ)\simeq\ZZ$. Hence, $K_C$ is topologically trivial, which means that $C$ is an elliptic curve.

For Item (ii), since $H^1(Z,\RR)=0$ as $b_1(Z)=0$, we obtain $b_1(Y)=0$ from the exact sequence \eqref{Leray5}. To prove that $b_1(\tilde Y) = 0$, we may assume that the desingularization $ \mu : \tilde Y\rightarrow Y$ satisfies $\mu\inv(Y_{\reg})\stackrel{\mu}\lras Y_{\reg}$ is an isomorphism. Then there exist a connected complex manifold $\tilde Z$ and the following commutative diagram
\begin{align}\label{cd:bch}
  \begin{CD}
     \tilde Z @>{\tilde f}>> \tilde Y \\
  @V{\phi}VV    @VV{\mu}V \\
     Z   @>f>>  Y
  \end{CD}
\end{align}
where $\tilde f$ is a surjective holomorphic mapping and $\phi$ is a bimeromorphic holomorphic mapping such that its restriction to $(f\circ \phi)\inv (Y_{\reg})$ is an isomorphism onto $f\inv(Y_{\reg})$; see \cite[Section~4]{Kawai1966} or \cite[Section~1.1]{Campana2004}.  
Since $\phi$ is bimeromorphic, by \cite[Proposition 2]{Kawai1965}, the homomorphism $H_1(\tilde Z,\ZZ) \simeq H_1(Z,\ZZ)$ induced by $\phi$ is an isomorphism.
Hence $b_1(\tilde Z) = b_1(Z)$, and this means $b_1(\tilde Z) =0$ as $b_1(Z)=0$ by assumption. Since the original mapping $f$ has connected fibers, so does the mapping $\tilde{f}$; this follows from \cite[Corollary~1.12]{U75}.
Hence, again $\tilde f_*\RR\simeq\RR$ for the direct image sheaf.
Therefore, by \eqref{Leray5} applied to $\tilde f$, we obtain an injection
$H^1(\tilde Y,\RR) \hookrightarrow H^1(\tilde Z,\RR).$
Since $b_1(\tilde Z) =0$ as above, this means $b_1(\tilde Y) = 0$.  Since $\tilde{Y}$ is a smooth compact surface with $b_1(\tilde{Y}) = 0$, from \cite[Theorem~IV.3.1]{BHPV}, it admits a K\"ahler metric. 
\end{proof}
The next result gives an estimate on the second Betti number of $Y$, and is crucial in proving Theorem \ref{thm:main1} in Sections \ref{s:a0case} and \ref{s:a1case}.
\begin{proposition}\label{p:b2bdd0}
Let $f:Z\rightarrow Y$ be a surjective holomorphic mapping with connected fibers from a 3-dimensional compact complex manifold $Z$ to a compact normal surface $Y$ and suppose $b_1(Z) = b_2(Z) = 0$.
Then 
\begin{align}\label{b2Y}
b_2(Y) = \dim H^0\big(Y,R^1f_*\RR\big).
\end{align}
Let $\Sigma$ be any closed subset of $Y$ and $f' := f|_{Z \setminus f^{-1}(\Sigma)}$. Then we have an inequality
\begin{align}\label{b2Y0}
b_2(Y) \le b_2(Y,Y\minus\Sigma) - b_1(Y\minus\Sigma) + \dim H^0\big(Y\minus\Sigma, R^1 f'_*\RR\big),
\end{align}
where $b_2(Y,Y\minus\Sigma):=\dim H_2(Y,Y\minus\Sigma\,;\RR)$.
\end{proposition}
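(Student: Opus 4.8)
The plan is to derive \eqref{b2Y} from the five-term exact sequence \eqref{Leray5} and then obtain \eqref{b2Y0} by comparing the Leray spectral sequence of $f$ with that of the restricted map $f'$, using the long exact sequence of the pair $(Y, Y \setminus \Sigma)$. For \eqref{b2Y}: since $b_1(Z) = 0$ and $f$ has connected fibers, the sequence \eqref{Leray5} gives that $H^1(Y,\RR) = 0$ (already noted in Proposition~\ref{prop:b1v}(ii)) and that $d_2 : H^0(Y, R^1f_*\RR) \to H^2(Y,\RR)$ is injective. Since $b_2(Z) = 0$, the map $f^* : H^2(Y,\RR) \to H^2(Z,\RR)$ is the zero map, so exactness at $H^2(Y,\RR)$ forces $d_2$ to also be surjective. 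Hence $d_2$ is an isomorphism and \eqref{b2Y} follows. (One should note that $H^2(Y,\RR)$ is finite-dimensional since $Y$ is a compact complex space; this is standard.)

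For the inequality \eqref{b2Y0}, set $U := Y \setminus \Sigma$ and $V := Z \setminus f^{-1}(\Sigma)$, so $f' : V \to U$. I would first observe that $f'$ still has connected fibers, so $f'_*\RR \simeq \RR$ and the five-term sequence \eqref{Leray5} applies to $f'$ as well; in particular $H^0(U, R^1 f'_*\RR)$ surjects onto $\ker\big(f'^*: H^2(U,\RR) \to H^2(V,\RR)\big) \supseteq \mathrm{image}\big(d_2 : H^0(U,R^1f'_*\RR) \to H^2(U,\RR)\big)$, giving the bound
\begin{align}
b_2(U) - \dim \mathrm{image}\,(f'^*) \le \dim H^0(U, R^1 f'_*\RR) - b_1(U) + \dim H^1(U,\RR)\cdot 0.
\end{align}
More precisely, from \eqref{Leray5} for $f'$ one reads off $b_2(U) \le b_1(U) \cdot 0 + \dim H^0(U,R^1f'_*\RR) + \dim\mathrm{image}\,(f'^*)$, but $b_1(U)$ enters through $H^1(V,\RR)$, so the clean statement is: the image of $d_2$ for $f'$ has dimension $\dim H^0(U,R^1f'_*\RR) - \big(b_1(V) - b_1(U)\big)$, and this image injects into $H^2(U,\RR)$. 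Meanwhile, restriction $H^2(Y,\RR) \to H^2(U,\RR)$ sits in the long exact sequence of the pair, $H^2(Y,U;\RR) \to H^2(Y,\RR) \to H^2(U,\RR)$, and by Poincaré–Lefschetz-type duality or directly $\dim H^2(Y,U;\RR) = b_2(Y, Y\setminus\Sigma)$ as defined. The key compatibility is that the class $d_2(\xi) \in H^2(Y,\RR)$ for $\xi \in H^0(Y,R^1f_*\RR)$ restricts to $d_2(\xi|_U) \in H^2(U,\RR)$, because the spectral sequence is functorial with respect to the open inclusion $U \hookrightarrow Y$.

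The main obstacle — and the step I would spend the most care on — is bookkeeping the contribution of $b_1(U)$ correctly. The restriction map $H^0(Y,R^1f_*\RR) \to H^0(U,R^1f'_*\RR)$ need not be injective or surjective, and $H^1(U,\RR)$ is generally nonzero, so one cannot simply substitute $U$ for $Y$ in \eqref{b2Y}. The right approach is: $b_2(Y) = \dim H^0(Y,R^1f_*\RR)$ by \eqref{b2Y}; the composite $H^0(Y,R^1f_*\RR) \xrightarrow{d_2} H^2(Y,\RR) \to H^2(U,\RR)$ has kernel contained in $\mathrm{image}\big(H^2(Y,U;\RR) \to H^2(Y,\RR)\big)$, whose dimension is at most $b_2(Y,Y\setminus\Sigma)$; and the image of that composite lands in $\mathrm{image}(d_2 \text{ for } f')$ inside $H^2(U,\RR)$, which has dimension $\dim H^0(U,R^1f'_*\RR) - b_1(U)$ (using $b_1(V) \ge b_1(U)$ via injectivity of $f'^*$ on $H^1$, together with $b_1(V) \le b_1(U) + \dim H^0(U,R^1f'_*\RR)$ — actually the exact count from \eqref{Leray5} for $f'$ is $\dim\mathrm{image}(d_2') = \dim H^0(U,R^1f'_*\RR) - (b_1(V) - b_1(U))$, and $b_1(V)-b_1(U) \ge 0$). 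Combining, $b_2(Y) \le b_2(Y,Y\setminus\Sigma) + \dim H^0(U,R^1f'_*\RR) - b_1(U)$, which is \eqref{b2Y0}. I would double-check the direction of each inequality and the finiteness of all the groups involved, and verify that $b_1(V) - b_1(U) \le \dim H^0(U, R^1f'_*\RR)$ so that the subtracted term is genuinely nonnegative, though only the displayed inequality is needed.
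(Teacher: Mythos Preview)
Your argument for \eqref{b2Y} is correct and matches the paper's.

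For \eqref{b2Y0} your overall strategy --- functoriality of the Leray spectral sequence plus the long exact sequence of the pair $(Y,U)$ --- is the right one and is exactly what the paper uses, but your final bookkeeping does not produce the stated inequality. Writing $\beta:H^2(Y,\RR)\to H^2(U,\RR)$ for restriction, you bound $\dim\Ker(\beta\circ d_2)=\dim\Ker\beta$ by $b_2(Y,Y\setminus\Sigma)$, and you bound the image of $\beta\circ d_2$ by $\dim\mathrm{image}(d_2')=\dim H^0(U,R^1f'_*\RR)-(b_1(V)-b_1(U))$. Adding these gives
\[
b_2(Y)\le b_2(Y,Y\setminus\Sigma)+\dim H^0(U,R^1f'_*\RR)-\big(b_1(V)-b_1(U)\big),
\]
which is \emph{not} the claimed inequality: the subtracted term is $b_1(V)-b_1(U)$, not $b_1(U)$, and there is no general relation forcing $b_1(V)-b_1(U)\ge b_1(U)$. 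So the final ``Combining'' step is unjustified.

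The fix is to put the $-b_1(U)$ on the kernel side rather than the image side. Since $b_1(Z)=0$ you have $H^1(Y,\RR)=0$ (Proposition~\ref{prop:b1v}(ii)), so the long exact sequence of the pair gives the \emph{exact} value
\[
\dim\Ker\beta = b_2(Y,Y\setminus\Sigma)-b_1(Y\setminus\Sigma),
\]
not merely the bound $\le b_2(Y,Y\setminus\Sigma)$. On the image side, by functoriality the image of $\beta\circ d_2$ lands in the image of the restriction map $\alpha:H^0(Y,R^1f_*\RR)\to H^0(U,R^1f'_*\RR)$ (equivalently, in $\mathrm{image}(d_2')$), and one simply bounds this by $\dim H^0(U,R^1f'_*\RR)$; there is no need to track $b_1(V)$ at all. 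Summing now gives \eqref{b2Y0}. This is precisely the paper's argument: it uses $d_2(\Ker\alpha)\subset\Ker\beta$ together with the sharp computation of $\dim\Ker\beta$, rather than trying to squeeze the $b_1(U)$ out of the five-term sequence for $f'$.
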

\begin{proof}
Since $f : Z \rightarrow Y$ has connected fibers, the sequence \eqref{Leray5} is
\begin{align}\label{}
0\lras H^1(Y)\lras H^1(Z) \lras H^0\big(Y,R^1f_*\RR\big) \stackrel{d_2}\lras H^2(Y) \lras H ^2(Z),
\end{align}
where we have omitted the coefficient ring $\RR$.
Since $H^1(Z)=H^2(Z)=0$ by the assumptions $b_1(Z) = b_2(Z)=0$, we obtain from this an isomorphism
\begin{align}\label{d1iso}
d_2: H^0(Y,R^1f_*\RR) \stackrel{\simeq}\lras H^2(Y)
\end{align}
for the differential of the spectral sequence.
Hence we obtain the equality \eqref{b2Y}.

Next, we use the naturality of the Grothendieck spectral sequence \cite{Haas}. Since the direct image functor under open inclusions preserves kernels and injectives (see~\cite[Section~II.4]{Iversen}),  \cite[Theorem~3.1]{Haas} implies the naturality of the Leray spectral sequence with respect to open inclusions, using well-known arguments. 
Therefore, we have the commutative diagram
\begin{align}
\label{e:CD}
  \begin{CD}
     H^0\big(Y,R^1f_*\RR \big) @>{d_2}>> H^2(Y,\RR) \\
  @V{\aaa}VV    @VV{\bbb}V \\
     H^0\big(Y\minus\Sigma,R^1f'_*\RR \big)   @>{d'_2}>>  H^2(Y\minus\Sigma,\RR) 
  \end{CD}
\end{align}
where $d'_2$ is a differential for the $E_2$-term in the spectral sequence associated to $f'$, and $\aaa$ and $\bbb$ are the restriction homomorphisms, respectively.  
From this, we obtain $d_2(\Ker \aaa) \subset \Ker \bbb$.
Since $d_2$ is an isomorphism from \eqref{d1iso}, this means
\begin{align}\label{ie:b2bdd0}
\dim (\Ker\aaa) \le \dim(\Ker\bbb).
\end{align}

Next, associated with the pair $(Y,\Sigma)$, again omitting the coefficient ring $\RR$ for all cohomology groups, we obtain a long exact sequence 
\begin{align}\label{rellong1}
H^1(Y) \lras H^1(Y\minus\Sigma) \lras H^2(Y,Y\minus\Sigma)
\lras
H^2(Y) \stackrel{\bbb}\lras H^2(Y\minus\Sigma). 
\end{align}
From Proposition \ref{prop:b1v} (ii), $H^1(Y) =0$ as $b_1(Z)=0$.
From \eqref{rellong1}, this means that 
\begin{align}\label{ab}
\dim (\Ker \bbb)
= b_2(Y,Y\minus\Sigma) - b_1(Y\minus\Sigma).
\end{align}
From \eqref{ie:b2bdd0}, this means 
\begin{align}\label{}
\dim (\Ker\aaa) &\le b_2(Y,Y\minus\Sigma) - b_1(Y\minus\Sigma).
\end{align}
This implies an inequality
\begin{align*}
\dim H^0\big(Y,R^1f_*\RR \big) &\le \dim(\Ker\aaa) + \dim H^0\big(Y\minus\Sigma,R^1f'_*\RR \big)\\
&\le b_2(Y,Y\minus\Sigma) - b_1(Y\minus\Sigma)
+ \dim H^0\big(Y\minus\Sigma,R^1f'_*\RR \big).
\end{align*}
From \eqref{b2Y},
this implies the desired inequality \eqref{b2Y0}.
\end{proof}

\subsection{Singularities of $Y$}
\label{ss:sing}
We next introduce the following kinds of surface singularities which will arise in the proof of Theorem~\ref{thm:main1}.  For background on resolutions of normal surface singularities, we refer the reader to \cite{Dimca, Dur79, GH, Ishii, LaBook}.
\begin{definition}\label{d:tsing}
Let $Y$ be a normal complex surface and $y\in Y$ a singularity of $Y$. Let $\mu:\tilde Y \rightarrow Y$ be a very good resolution of $y$ and $E :=\mu\inv(y)$ the exceptional divisor of $\mu$. We say that 
\begin{itemize}
\item $y$ is a {\em rational tree singularity} of $Y$ if all irreducible components of $E$ are rational, and the dual graph of $E$ is a tree. 
\item $y$ is a {\em unicyclic singularity} of $Y$ if all irreducible components of $E$ are rational, and the dual graph of $E$ contains exactly one cycle. 
\item $y$ is an {\em elliptic tree singularity} of $Y$ if $E$ has exactly one irreducible component that is a smooth elliptic curve, all other components are rational,
and the dual graph of $E$ is a tree. 
\end{itemize}
\end{definition}
Since there exists a minimal very good resolution, it follows that if one of these conditions holds for \textit{some} very good resolution, then the same holds for \textit{all} very good resolutions.

\begin{remark}Artin defined a \textit{rational singularity} in \cite{Artin1966}. These are necessarily rational tree singularities (see \cite[Lemma~1.3]{Brieskorn1967}), but it is easy to see that the class of rational tree singularities is strictly larger than the class of rational singularities. 
\end{remark}

The next proposition gives an alternative characterization of the singularities in Definition~\ref{d:tsing}, in terms of the first Betti number of the exceptional curve of a very good resolution.
\begin{proposition}
\label{p:scri}
Under the assumptions in Definition \ref{d:tsing}, we have the following:
\begin{itemize}
\item 
$y$ is a rational tree singularity if and only if $b_1(E) = 0$. 
\item
$y$ is a unicyclic singularity if and only if $b_1(E)=1$.
\item 
$y$ is an elliptic tree singularity if and only if  $b_1(E)=2$ and some irreducible component of $E$ is a smooth elliptic curve.
\end{itemize}
\end{proposition}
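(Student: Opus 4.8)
The plan is to compute $b_1(E)$ directly from the combinatorics of the dual graph of the exceptional divisor $E = \bigcup_i E_i$ of a very good resolution. Since $\mu$ is a \emph{very good} resolution, $E$ is a simple normal crossing divisor: each $E_i$ is a smooth curve, two distinct components meet in at most one point, and no three components share a point. Thus $E$ is homotopy equivalent to the CW complex obtained by gluing the surfaces $E_i$ (of genus $g_i$) along the intersection pattern recorded by the dual graph $G$, which has one vertex per component and one edge per intersection point. I would use the Mayer--Vietoris / simplicial-type argument: there is a standard exact sequence computing $H_*(E)$ from $\bigoplus_i H_*(E_i)$, $\bigoplus_{i<j} H_*(E_i \cap E_j)$, giving
\begin{align}
\label{e:b1Esplit}
b_1(E) = b_1(G) + \sum_i 2 g_i,
\end{align}
where $b_1(G) = (\#\text{edges}) - (\#\text{vertices}) + (\#\text{components of }G)$ is the first Betti number (cycle rank) of the graph, and the factor $2g_i$ is $b_1(E_i)$ for a smooth curve of genus $g_i$. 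Since a resolution exceptional divisor is connected, $G$ is connected, so $b_1(G)$ equals the number of independent cycles in $G$.

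Granting \eqref{e:b1Esplit}, the proposition follows by a short case analysis. If $b_1(E) = 0$, then every $g_i = 0$ (all components rational) and $b_1(G) = 0$, i.e.\ $G$ is a tree; conversely these two conditions force $b_1(E)=0$. This is exactly the rational tree case. If $b_1(E) = 1$, then since each summand $2g_i$ is even and $b_1(G)$ is a nonnegative integer, the only possibility is $b_1(G) = 1$ and all $g_i = 0$: $G$ is connected with exactly one independent cycle, i.e.\ contains exactly one cycle, and all components are rational---precisely the unicyclic case. If $b_1(E) = 2$, there are a priori two ways to write $2 = b_1(G) + \sum 2g_i$: either $b_1(G) = 2$ with all $g_i = 0$ (two cycles), or $b_1(G) = 0$ with exactly one component of genus $1$ and the rest rational (a tree with one elliptic component). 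The extra hypothesis that \emph{some} irreducible component of $E$ is a smooth elliptic curve kills the first alternative and leaves exactly the elliptic tree case; and conversely an elliptic tree singularity visibly has $b_1(E) = 2$ with an elliptic component. Finally, I should remark that all three notions are independent of the chosen very good resolution: since blowing up a point on $E$ (or at a crossing) replaces a vertex/edge by a tree-like configuration of rational curves and never changes $b_1(G)$ or the multiset of positive $g_i$, formula \eqref{e:b1Esplit} is invariant, as already noted in the excerpt after Definition~\ref{d:tsing}.

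The only real content is establishing \eqref{e:b1Esplit}. I expect this to be the main (though routine) step: one writes $E$ as a union of its components, uses that it is a finite normal crossing curve, and either cites the standard formula for the homology of such a curve or proves it by induction on the number of components via Mayer--Vietoris, being careful that at each stage one attaches a component $E_{k}$ along a finite set of points (its intersections with $E_1 \cup \dots \cup E_{k-1}$), which contributes $2g_k$ to $b_1$ from $E_k$ itself and, for each attaching point beyond the first that lands in a common connected component, one new cycle to the graph. An alternative, perhaps cleaner, route is to observe that $E$ deformation retracts onto a CW complex with one $0$-cell per vertex of $G$, one $1$-cell per edge of $G$ together with $2g_i$ extra $1$-cells per vertex, and the $2$-cells coming from the genus surfaces attached in the standard way; then a direct Euler-characteristic plus $H_0$ computation gives $b_1(E) = 1 - \chi(E) + b_0(E)$ and $\chi(E) = \sum_i \chi(E_i) - \#\{\text{intersection points}\} = \sum_i (2 - 2g_i) - \#\text{edges}$, which together with $b_0(E) = 1$ and $b_0(G)=1$ rearranges to \eqref{e:b1Esplit}. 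With \eqref{e:b1Esplit} in hand, the rest is the immediate parity/case bookkeeping above.
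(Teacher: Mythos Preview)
Your approach is essentially the same as the paper's: the paper simply cites \cite[Proposition~2.3.1]{Dimca} for the identity $b_1(E) = 2g + b$ (your \eqref{e:b1Esplit} with $b = b_1(G)$ and $g = \sum_i g_i$) and then says the case analysis is immediate, whereas you sketch a proof of that identity and spell out the cases. One small slip in your alternative Euler-characteristic route: since $E$ is a compact complex curve with $b_2(E)$ equal to the number $v$ of irreducible components, the correct relation is $b_1(E) = b_0(E) + b_2(E) - \chi(E) = 1 + v - \chi(E)$, not $1 - \chi(E) + b_0(E)$; with this correction your computation $\chi(E) = \sum_i(2-2g_i) - e$ indeed rearranges to $b_1(E) = (e - v + 1) + 2\sum_i g_i = b_1(G) + 2\sum_i g_i$.
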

\begin{proof}
Let $\CCC$ be the dual graph of $E$. If $b$ is the number of closed cycles in $\CCC$ and $g$ is the sum of the genera of the irreducible components of $E$, then by \cite[Proposition~2.3.1]{Dimca}, 
\begin{align}\label{b1E}
b_1(E) = 2g + b.
\end{align}
The proposition easily follows from this formula; the details are omitted. 
\end{proof}

The link $L$ of the singularity is a compact $3$-manifold such that a neighborhood of the singularity is homeomorphic to the cone over $L$; see \cite[Chapter~1]{Dimca}. 
We will also need the following characterization of rational tree singularities.
\begin{proposition}
\label{p:scri2}
Let $Y$ be a normal complex surface and $y\in Y$ a singularity of $Y$.
Then $y$ is a rational tree singularity if and only if $b_1(L) = 0$, that is, $L$ is a $\QQ$-homology $3$-sphere. 
\end{proposition}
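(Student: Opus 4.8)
The plan is to prove the equivalence by showing that $b_1(L) = b_1(E)$, where $E = \mu^{-1}(y)$ is the exceptional curve of a very good resolution $\mu : \tilde Y \to Y$. Combined with Proposition~\ref{p:scri}, which says that $y$ is a rational tree singularity if and only if $b_1(E) = 0$, this gives the statement: indeed $L$ is a closed connected oriented $3$-manifold (connected because the exceptional set of a resolution of a normal surface point is connected), so Poincar\'e duality gives $b_1(L) = b_2(L)$ and $b_0(L)=b_3(L)=1$, and hence $b_1(L) = 0$ is precisely the condition that $L$ be a $\QQ$-homology $3$-sphere.

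To compare $L$ with $E$, I would fix a compact regular neighborhood $N$ of $E$ inside $\tilde Y$; such a neighborhood is a compact oriented $4$-manifold with $\partial N = L$ that deformation retracts onto $E$ (see \cite{Dur79}). Since $E$ is a simple normal crossing divisor with smooth compact irreducible components $E_1, \dots, E_r$, one has $H_\ast(N; \QQ) \cong H_\ast(E; \QQ)$, with $H_2(E;\QQ) = \bigoplus_{i=1}^r \QQ\,[E_i]$ and $H_j(E;\QQ) = 0$ for $j \geq 3$; in particular $b_1(N) = b_1(E)$ and $H^3(N;\QQ) = 0$. Lefschetz duality then gives $H_2(N, L; \QQ) \cong H^2(N;\QQ)$ and $H_1(N, L;\QQ) \cong H^3(N;\QQ) = 0$.

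The crucial step is to identify the natural map $H_2(N;\QQ) \to H_2(N, L; \QQ)$. Composing the Lefschetz isomorphism with the universal coefficient isomorphism $H^2(N;\QQ) \cong H_2(N;\QQ)^\ast$, this map is represented, in the basis $\{[E_i]\}$, by the intersection matrix $(E_i\cdot E_j)_{i,j}$. Because $E$ is the exceptional divisor of a resolution of the normal surface point $y$, this matrix is negative definite by the theorem of Mumford and Grauert, hence invertible, so $H_2(N;\QQ) \to H_2(N, L;\QQ)$ is an isomorphism. Feeding this together with $H_1(N, L; \QQ) = 0$ into the long exact homology sequence of the pair $(N, L)$ over $\QQ$,
\[
H_2(N) \overset{\cong}{\lras} H_2(N, L) \lras H_1(L) \lras H_1(N) \lras H_1(N, L) = 0,
\]
shows that $H_1(L;\QQ) \to H_1(N;\QQ)$ is an isomorphism, whence $b_1(L) = b_1(N) = b_1(E)$, as desired.

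I expect the main obstacle to be the careful bookkeeping needed to identify the Lefschetz duality pairing $H_2(N;\QQ)\otimes H_2(N, L;\QQ)\to\QQ$ with the intersection pairing of the curves $E_i$, and to cite the negative definiteness of the resolution intersection matrix in the correct generality for normal surface singularities; the remaining steps are routine applications of duality and the long exact sequence of a pair. Alternatively, the identity $b_1(L) = b_1(E)$ can be extracted directly from the standard theory of plumbed $3$-manifolds (see \cite{Dimca}), which would shorten the argument, but the self-contained version above seems preferable.
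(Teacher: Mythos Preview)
Your proof is correct and follows the same logical structure as the paper's: both reduce the statement to the identity $b_1(L)=b_1(E)$ and then invoke Proposition~\ref{p:scri}. The only difference is that the paper obtains $b_1(L)=b_1(E)$ by a direct citation of \cite[Proposition~2.3.4]{Dimca}, whereas you supply a self-contained argument via Lefschetz duality for the pair $(N,L)$ and the negative definiteness of the intersection matrix---which is precisely the standard proof underlying that citation (and which you yourself note as the alternative at the end).
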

\begin{proof}
Let $\mu : \tilde{Y} \rightarrow Y$ be a very good resolution with exceptional divisor $E = \mu^{-1}(y)$. Then by \cite[Proposition~2.3.4]{Dimca}, $b_1(L) = b_1(E) = 0$, so this follows immediately from Proposition~\ref{p:scri}. 
\end{proof}
The following proposition will be used often. 
\begin{proposition}
\label{p:Lamotke}
Let $Y$ be a normal surface and $y\in Y$ a singularity, which is isolated,
$\mu:\tilde Y\lras Y$ be any resolution of $y$, and $E$ be the exceptional curve of $\mu$.
Then there exists an open neighborhood $U$ of $y$ in $Y$ such that $\tilde{U}: = \mu^{-1}(U)$ admits a strong deformation retraction onto $E$.
\end{proposition}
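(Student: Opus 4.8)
The plan is to reduce the statement to the standard fact that a complex analytic subvariety of a ball admits the ambient ball as a deformation retract onto the cone point, and then transport this along the resolution map. First I would choose a local embedding: since $y$ is an isolated singularity, there is a neighborhood of $y$ in $Y$ biholomorphic to an analytic subset $V$ of a polydisc (or ball) $B\subset\CC^N$ with $y\mapsto 0$, and after shrinking we may assume $V$ is contractible and, more precisely, that $V$ is a cone-like set so that $V\setminus\{y\}$ deformation retracts onto the link $L$ and $V$ itself is contractible to $y$ (this is the Milnor-type conical structure of analytic germs, as in \cite[Chapter~1]{Dimca} or via the curve-selection/local-conical-structure lemma). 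Next I would transport this picture through $\mu$: set $U := V$ (suitably small) and $\tilde U := \mu^{-1}(U)$. Because $\mu$ is proper and is an isomorphism over $U\setminus\{y\}$ with $\mu^{-1}(y)=E$, we have $\tilde U\setminus E \cong U\setminus\{y\}$, and $\tilde U$ is obtained from $U\setminus\{y\}$ by "collapsing the end $L$ to $E$''.

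The key step is then to produce the strong deformation retraction of $\tilde U$ onto $E$. Here I would invoke the well-known result, essentially due to works on resolutions of surface singularities (this is precisely \cite[Lemma~(5.1)]{Lamotke} / see also the treatment in \cite[Section~2.3]{Dimca}), that for a resolution $\mu$ of an isolated surface singularity, a sufficiently small "tubular'' neighborhood $\tilde U$ of the compact exceptional curve $E$ in the smooth surface $\tilde Y$ admits a strong deformation retraction onto $E$; this uses that $E$ is a compact analytic subset of the complex manifold $\tilde Y$, together with the structure of a regular neighborhood of a (possibly singular, but with normal-crossings if we start from a good resolution) complex curve in a complex surface. Concretely, one builds the retraction by choosing a Riemannian metric and a nonnegative function vanishing exactly on $E$ whose gradient flow pushes a small sublevel set onto $E$ in finite time; compactness of $E$ and the analyticity (finiteness of critical values, Łojagnasik-type inequalities) guarantee that after shrinking there are no critical points of this function in $\tilde U\setminus E$, so the flow defines the desired deformation retraction. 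Finally, one must check compatibility: the $U$ produced this way can be taken to be a fundamental system of neighborhoods of $y$, so that $\mu(\tilde U)$ is open and we may relabel it $U$; since $\mu$ is a homeomorphism away from $E$ and $E=\mu^{-1}(y)$, the set $\tilde U=\mu^{-1}(U)$ is exactly the regular neighborhood we retracted.

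I expect the main obstacle to be the careful matching of the two neighborhood systems — the conical neighborhood of $y$ downstairs and the regular neighborhood of $E$ upstairs — so that $\tilde U$ really equals $\mu^{-1}(U)$ for an \emph{open} $U\ni y$, rather than just some abstract neighborhood of $E$; this is handled by first building the regular neighborhood $N$ of $E$ with its retraction, then setting $U:=\mu(N)$, observing $U$ is open (as $\mu$ is proper, hence closed, and $\mu(\tilde Y\setminus N)$ is closed and misses $y$), and noting $\mu^{-1}(U)=N$ because $\mu^{-1}(y)=E\subset N$ and $\mu$ is bijective over $U\setminus\{y\}$. With that bookkeeping in place, the deformation retraction of $N$ onto $E$ furnished by the cited results is exactly the assertion, so I would keep the write-up short and defer the construction of the regular-neighborhood retraction to the references.
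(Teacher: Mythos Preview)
Your proposal is correct, and the neighborhood-matching bookkeeping (setting $U=\mu(N)$ and checking $\mu^{-1}(U)=N$ via properness and the fact that $\mu$ is bijective off $E$) is more explicit than what the paper writes. However, the overall route differs from the paper's in one respect worth noting.

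The paper first observes that the result in \cite{Lamotke} (Proposition~6 on p.~174; see also \cite{Mumford1961}) is stated for \emph{very good} resolutions. It then handles an arbitrary resolution $\mu:\tilde Y\to Y$ by dominating it: choose a composition of point blowups $\alpha:\tilde Y_1\to\tilde Y$ so that $\mu_1:=\mu\circ\alpha$ is very good, apply the cited result upstairs on $\tilde Y_1$ to get a retraction of $\mu_1^{-1}(U)$ onto $E_1$, and then push this retraction down along the proper map $\alpha$ to obtain the retraction of $\mu^{-1}(U)$ onto $E$. Your approach instead works directly on $\tilde Y$ for an arbitrary resolution, invoking a regular-neighborhood retraction for the compact analytic curve $E$ in the smooth surface $\tilde Y$ (or sketching a \L ojasiewicz-type gradient-flow construction). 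Both strategies are valid. The paper's buys an exact match with what is literally proved in the cited references, at the cost of the extra blowup/blowdown layer; yours is more direct but obliges you either to verify that your citation genuinely covers the case where $E$ is not normal-crossings, or to make the gradient-flow argument precise rather than leave it as a sketch.
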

\begin{proof}
If the resolution is very good, this is proved in 
\cite[p.~174, Proposition 6]{Lamotke}; see also \cite{Mumford1961}.
For an arbitrary resolution $\mu:\tilde Y\rightarrow Y$ of $y$, 
take a domination 
$\tilde  Y_1 \stackrel{\aaa}\lras \tilde Y \stackrel{\mu}\lras Y$,
 where the composition $\mu\circ\aaa$ is a very good resolution of $y$ and $\aaa$ is a composition of ordinary blowups. 
Let $E_1$ be the exceptional curve of the resolution $\mu_1:=\mu\circ\aaa:\tilde Y_1\rightarrow Y$.
As above, there exists a neighborhood $U$ of $y$ in $Y$ such that $\tilde U_1 := \mu_1\inv(U)$ admits
a strong deformation retraction onto $E_1$. Since $\alpha$ is a proper mapping, it is straightforward to show that $\tilde{\Phi}$ induces the required strong deformation retraction; the elementary details are omitted. 
\end{proof}
Recall that the algebraic dimension of $Y$ could be $0$ or $1$, since the case that $a(Y)=2$ has already been ruled out. 
The following result puts constraints on the singularities of $Y$ and will be proved in Section~\ref{s:a0case} in the case that $a(Y) =0$ and in Section~\ref{s:a1case} in the case that $a(Y) =1$. 
\begin{theorem} 
\label{thm:ay}
Let $Z$ be as in Theorem~\ref{thm:main1} and $f : Z \rightarrow Y$  
with the reduction to the case where $f$ has connected fibers and $Y$ is reduced, irreducible and normal.  If $a(Y) = 0$, then $Y$ can have only rational tree singularities.   If $a(Y) = 1$, then $Y$ can only have rational tree, unicyclic, or elliptic tree singularities. 
\end{theorem}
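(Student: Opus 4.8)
The plan is to handle $a(Y)=0$ and $a(Y)=1$ separately, but the setup is common. Fix a singularity $y\in Y$, choose a very good resolution $\mu:\tilde Y\to Y$, and set $E=\mu^{-1}(y)$ with dual graph $\CCC$. By Proposition~\ref{p:scri} and formula~\eqref{b1E} ($b_1(E)=2g+b$, with $g$ the total genus of the components of $E$ and $b$ the number of independent cycles of $\CCC$), it suffices to show: for $a(Y)=0$, that every component of $E$ is rational and $\CCC$ is a tree; and for $a(Y)=1$, that either $b_1(E)\le 1$ or $b_1(E)=2$ with exactly one component of $E$ a smooth elliptic curve. Throughout one uses that by Proposition~\ref{prop:b1v} the surface $\tilde Y$ is a smooth compact K\"ahler surface with $b_1(\tilde Y)=0$, that a generic fiber of $f$ is an elliptic curve with trivial normal bundle, and that the intersection form of $E$ on $\tilde Y$ is negative definite.

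For $a(Y)=0$, Proposition~\ref{prop:K3} shows $\tilde Y$ is a bimeromorphic model of a K3 surface $S$; since $S$ is minimal and not ruled, there is a birational morphism $\rho:\tilde Y\to S$, a composition of contractions of $(-1)$-curves. On $S$ one has $K_S\equiv 0$, so adjunction gives $C^2=2p_a(C)-2$ for every irreducible curve $C\subset S$; hence an irreducible curve of negative self-intersection on $S$ is a smooth rational $(-2)$-curve, and by the classical theory of Du Val configurations every connected negative definite reduced curve on $S$ is an $ADE$ union of $(-2)$-curves, in particular a rational tree. I would then transfer this back to $\tilde Y$ by induction on the number of blow-downs composing $\rho$: in the inductive step one contracts a single $(-1)$-curve $\ell$ and checks that the image of the connected negative definite curve $E$ remains connected and negative definite and acquires no new cycle or positive-genus component — the essential local point being that $\ell$ cannot meet $E$ at two points or tangentially, for otherwise $E\cup\ell$, hence its image, would fail to be negative definite. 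This gives $b_1(E)=0$, i.e.\ $y$ is a rational tree singularity.

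For $a(Y)=1$, by Proposition~\ref{p:ell} the surface $Y$ is elliptic over $\PP^1$; choosing the resolution so that the induced map $\tilde Y\dashrightarrow\PP^1$ is a morphism and passing to the relatively minimal model $\tilde Y\to\tilde Y'$ of this fibration, one obtains a relatively minimal elliptic fibration $\psi:\tilde Y'\to\PP^1$. By Kodaira's classification \cite{Kodaira1963}, every reducible fiber of $\psi$ consists of smooth rational $(-2)$-curves, and the only irreducible non-rational fibers are smooth elliptic curves (type $I_0$, possibly with a multiplicity). Since every fiber, and the reduced part of every multiple fiber, has self-intersection $0$, a connected negative definite curve $E$ contains no whole fiber and no reduced part of a multiple fiber; so the components of (the image of) $E$ lying in fibers contribute only trees of rational curves (each reducible Kodaira fiber with one component deleted is a chain or a tree), while its remaining components are multisections. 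The core of the argument is to bound the contribution of the multisection components and of the cycles among the fiber components: here one combines the canonical bundle formula for $\psi$ with $b_1(\tilde Y)=0$ and with the global numerical constraints forced by the existence of $f:Z\to Y$ — notably $b_2(Y)=\dim H^0(Y,R^1f_*\RR)$ from Proposition~\ref{p:b2bdd0}, which is the dimension of a space of monodromy-invariant sections of a rank-$2$ local system and hence at most $2$, together with $\chi_{\rm top}(Z)\neq 0$ — to exclude two or more independent cycles and to force any positive-genus component of $E$ to be a single smooth elliptic curve. Proposition~\ref{p:Lamotke} and the submersion \eqref{fb1} then convert these conclusions into the topological statement that $b_1(E)\le 2$ with a smooth elliptic component when $b_1(E)=2$, i.e.\ $y$ is a rational tree, unicyclic, or elliptic tree singularity.

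I expect the main obstacle in both cases to be exactly this curve-by-curve bookkeeping. For $a(Y)=0$ it reduces to proving cleanly that no blow-up of a K3 surface contains a connected negative definite curve with $b_1\ge 1$, i.e.\ controlling genus and cycles under the contractions in $\rho$. For $a(Y)=1$ the harder point is the systematic elimination of high-genus multisection components of $E$ and of fiber configurations with two or more independent cycles, which requires feeding the global constraints on $\tilde Y$ coming from $f$ back into Kodaira's fiber classification; this is the technical heart of the results on normal elliptic surfaces in Section~\ref{s:elliptic} and of Section~\ref{s:a1case}.
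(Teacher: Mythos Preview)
For $a(Y)=0$ your approach is workable but more roundabout than the paper's. The paper exploits $a(Y_0)=0$ directly: since every line bundle on $Y_0$ has $h^0\le 1$, Riemann--Roch forces $D^2\le -2$ for \emph{every} nonzero effective divisor on the K3 model $Y_0$ (Proposition~\ref{prop:cK3}), so every connected curve on $Y_0$ is already an $ADE$ tree of $(-2)$-curves. Since $\tilde Y\to Y_0$ is a composition of point blow-ups, every connected curve on $\tilde Y$---in particular each exceptional $E$---is then automatically a tree of smooth rational curves (Corollary~\ref{c:cK3}). This avoids your inductive descent of $E$ through the blow-downs and the delicate bookkeeping about how $(-1)$-curves meet $E$.

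For $a(Y)=1$ there is a genuine gap: you overlook the decisive consequence of $a(\tilde Y)=1$, namely that the elliptic fibration $\tilde\pi:\tilde Y\to C$ is the algebraic reduction, so \emph{every} curve on $\tilde Y$ lies in a fiber of $\tilde\pi$ (Proposition~\ref{p:a1s}(ii) and its proof). There are no multisections. Hence the exceptional curve $E$ sits inside a single fiber $\tilde F$, and since $E$ is negative definite while $\tilde F^2=0$, it is a proper subcurve of $\tilde F$. The conclusion then follows from a direct case analysis on Kodaira's list after passing to the relatively minimal model: the only fiber types with $b_1>0$ are $mI_0$ (smooth elliptic, $b_1=2$) and $mI_b$, $b\ge 1$ (nodal or a cycle of rational curves, $b_1=1$), and one checks that any very-good-resolution subcurve of a blow-up of each Kodaira type is a rational tree, unicyclic, or elliptic tree. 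None of the global input from $Z$---Proposition~\ref{p:b2bdd0}, $\chi_{\rm top}(Z)\neq 0$, or the monodromy argument---is needed here; your claim that $b_2(Y)=\dim H^0(Y,R^1f_*\RR)\le 2$ is in any case not valid, since $R^1f_*\RR$ is a rank-$2$ local system only away from the discriminant locus of $f$.
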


\section{The case $a(Y) = 0$}\label{s:a0case}
In this section, we consider the case of $a(Y) = 0$ in Theorem~\ref{thm:main1}.
\begin{proposition}\label{prop:K3}
Under the situation of Theorem \ref{thm:main1} with the reduction to the case where $f$ has connected fibers and $Y$ is reduced and normal, further assume that the mapping $f:Z\rightarrow Y$ is surjective and $a(Y)=0$. Then $Y$ is bimeromorphic to a K3 surface.
\end{proposition}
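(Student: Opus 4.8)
The plan is to combine the topological constraints already established with the classification of compact complex surfaces. First, by Proposition~\ref{prop:b1v}(ii), since $b_1(Z) = 0$, any desingularization $\tilde Y$ of $Y$ satisfies $b_1(\tilde Y) = 0$ and admits a K\"ahler metric. So $\tilde Y$ is a compact K\"ahler surface with $b_1 = 0$ and (by assumption, inherited from $a(Y) = 0$ via the bimeromorphic invariance of algebraic dimension) $a(\tilde Y) = 0$. I would then invoke the Enriques--Kodaira classification: a compact K\"ahler surface with $a = 0$ has Kodaira dimension $-\infty$, $0$, or $2$, but Kodaira dimension $2$ (general type) and Kodaira dimension $-\infty$ (rational or ruled, in the K\"ahler/$b_1=0$ case rational) both force $a = 2$. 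Hence $\tilde Y$ has Kodaira dimension $0$. Among minimal surfaces of Kodaira dimension $0$ (complex tori, K3, Enriques, bielliptic), the condition $b_1 = 0$ rules out tori and bielliptic surfaces, leaving K3 and Enriques surfaces as the only candidates.

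Next I would rule out the Enriques case. An Enriques surface has an unramified double cover which is a K3 surface, and more to the point it has $b_2 = 10$, torsion in $H^2$, and in particular $h^{2,0} = 0$ with a nontrivial canonical bundle of order two. The key is to bring in the fibration structure: the surjection $f : Z \to Y$ lifts (as in the diagram \eqref{cd:bch}) to a surjection $\tilde f : \tilde Z \to \tilde Y$ with $\tilde Z$ a compact complex threefold, $b_1(\tilde Z) = b_2(\tilde Z) = 0$, and connected fibers. By Proposition~\ref{prop:b1v}(i), the generic fiber is an elliptic curve, and by Proposition~\ref{p:b2bdd0} (equation \eqref{b2Y}) we have $b_2(\tilde Y) = \dim H^0(\tilde Y, R^1 \tilde f_* \RR)$; since $R^1 \tilde f_* \RR$ has rank $2$ on the generic point and is supported with rank $\le 2$ everywhere, its global sections have bounded dimension — in fact this forces $b_2(\tilde Y)$ to be quite small. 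This is the mechanism: combining the elliptic-fiber structure with $b_2(\tilde Y) = \dim H^0(\tilde Y, R^1 f_* \RR)$, one checks that an Enriques surface ($b_2 = 10$) is incompatible, essentially because a rank-$2$ local system invariant structure cannot generate a $10$-dimensional space of global flat sections over an Enriques surface. Once Enriques is excluded, $\tilde Y$ is a K3 surface, and $Y$, being bimeromorphic to $\tilde Y$, is bimeromorphic to a K3 surface.

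The main obstacle I anticipate is the Enriques exclusion. The rough classification argument (K\"ahler, $b_1 = 0$, $a = 0$ $\Rightarrow$ Kodaira dimension $0$ $\Rightarrow$ K3 or Enriques) is standard and quick. But ruling out Enriques cleanly requires extracting a genuine numerical contradiction. The cleanest route is probably: from \eqref{b2Y}, $b_2(\tilde Y) = \dim H^0(\tilde Y, R^1 \tilde f_* \RR)$; the sheaf $R^1 \tilde f_*\RR$ restricted to the complement of the discriminant is a rank-$2$ local system with monodromy in $\mathrm{SL}(2,\ZZ)$ (the fibers being elliptic curves), and the dimension of its invariants is at most $2$, while jumps in the stalks over the discriminant can only be controlled by the type of degeneration — yielding an explicit small upper bound for $b_2(\tilde Y)$ that is incompatible with $b_2 = 10$. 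Alternatively, and perhaps more robustly, one argues: if $\tilde Y$ were Enriques, pass to the K3 double cover $W \to \tilde Y$, pull back the fibration, and derive a contradiction with the already-known bound on $b_2(Z)$, or directly with \eqref{b2Y0} applied with a well-chosen $\Sigma$. I would first try to push the invariants-of-the-monodromy bound to show $b_2(\tilde Y) \le 2$ (or some similar bound well below $10$), which immediately eliminates Enriques and also foreshadows the sharper estimates in Proposition~\ref{p:b2bdd}; if that bookkeeping gets delicate, I would fall back on the double-cover argument.
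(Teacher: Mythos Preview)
Your detour through Kodaira dimension creates a phantom obstacle. An Enriques surface is always projective, hence has algebraic dimension $2$; since $a(\tilde Y)=a(Y)=0$ by bimeromorphic invariance, the Enriques case is already excluded by the hypothesis, and no further work is needed. The paper's proof exploits this directly: the classification of compact surfaces with $a=0$ (see \cite[p.~257]{BHPV}) yields only complex tori, class~VII surfaces, and K3 surfaces as possible bimeromorphic types for $\tilde Y$; then $b_1(\tilde Y)=0$ from Proposition~\ref{prop:b1v}(ii) eliminates tori ($b_1=4$) and class~VII surfaces ($b_1=1$), leaving K3. That is the entire argument.

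Your proposed mechanism for excluding Enriques also contains a genuine error. You assert that the modification $\tilde Z$ in diagram~\eqref{cd:bch} satisfies $b_2(\tilde Z)=0$, and then invoke \eqref{b2Y} for $\tilde f:\tilde Z\to\tilde Y$. But $b_2$ is not a bimeromorphic invariant in dimension three: the map $\phi:\tilde Z\to Z$ is typically a composition of blowups, which increase $b_2$. Only $b_1(\tilde Z)=b_1(Z)=0$ is guaranteed (and that is all the paper uses from that diagram). So Proposition~\ref{p:b2bdd0} does not apply to $\tilde f$, and your bound $b_2(\tilde Y)=\dim H^0(\tilde Y,R^1\tilde f_*\RR)$ is not available. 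The fallback double-cover argument would face the same difficulty. Fortunately none of this machinery is needed once you observe that Enriques surfaces have $a=2$.
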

\begin{proof}
Since $a(\tilde Y) = 0$ for any desingularization $\tilde Y$ of $Y$ by assumption,
the classification of compact complex surfaces \cite[p.~257]{BHPV} means that $\tilde Y$ is bimeromorphic to a complex torus, a class VII surface, or a K3 surface. 
But since $b_1(\tilde Y) = 0$ from Proposition \ref{prop:b1v}, 
the first two possibilities cannot happen and hence $\tilde Y$ can only be bimeromorphic to a K3 surface.
\end{proof}
Thus, we need to investigate the structure of curves on a normal K3 surface whose algebraic dimension is zero. We first consider the case of a smooth K3 surface.
\begin{proposition}
\label{prop:cK3}
Let $Y_0$ be a non-singular K3 surface with $a(Y_0) = 0$. If $C$ is a connected curve on $Y_0$, namely if $C$ is a 1-dimensional subvariety of $Y_0$ which is connected, then $C$ is the exceptional divisor of the minimal resolution of a rational double point of a surface.
\end{proposition}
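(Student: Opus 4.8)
The plan is to exploit the interplay between the algebraic dimension of a K3 surface and the intersection-theoretic behavior of curves on it. First I would observe that since $a(Y_0) = 0$, the surface $Y_0$ admits no non-constant meromorphic functions, and in particular cannot contain any curve $D$ with $D^2 > 0$: the existence of such a divisor would, via Riemann-Roch on the K3 surface (recall $\chi(\mathcal O(D)) = 2 + \tfrac12 D^2$ and $H^2(Y_0,\mathcal O(D)) = H^0(Y_0,\mathcal O(-D))^* = 0$), force $h^0(\mathcal O(D)) \geq 2$, giving a positive-dimensional linear system and hence a non-constant meromorphic function, contradicting $a(Y_0) = 0$. Thus every irreducible curve $D \subset Y_0$ satisfies $D^2 \leq 0$. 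Combined with the adjunction formula $2p_a(D) - 2 = D^2 + K_{Y_0}\cdot D = D^2$ (since $K_{Y_0}$ is trivial), and $p_a(D) \geq 0$, we get $D^2 \in \{-2, 0\}$ with $D^2 = 0$ only if $p_a(D) = 1$.

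Next I would rule out the possibility $D^2 = 0$ for an irreducible component of $C$. If some irreducible $D \subset Y_0$ had $D^2 = 0$, then $D$ would be a smooth elliptic curve (arithmetic genus one with self-intersection zero, and one checks it must be reduced and smooth — an irreducible nodal or cuspidal curve of arithmetic genus one with $D^2=0$ still moves), and a standard argument (Riemann-Roch again, now giving $h^0(\mathcal O(D)) \geq 2$ so $|D|$ is a pencil, or directly using that a curve with $D^2 = 0$ and $p_a = 1$ generates an elliptic fibration) shows $Y_0$ is elliptic over $\PP^1$, forcing $a(Y_0) = 1$, a contradiction. Hence every irreducible component $D_i$ of $C$ has $D_i^2 = -2$ and is a smooth rational curve (arithmetic genus $0$ with $p_a = 0$ forces $D_i \cong \PP^1$).

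Then I would argue that the intersection matrix of the components of $C$ is negative definite. The key input is the Hodge index theorem: on the K3 surface $H^2(Y_0,\ZZ)$ carries an intersection form of signature $(3, 19)$, and since $a(Y_0) = 0$ there is no class of positive self-intersection in the subspace spanned by the effective curve classes — indeed the sublattice of the Néron–Severi-type lattice spanned by all irreducible curves must be negative definite, because any class $x$ in it with $x^2 \geq 0$ would (after perturbing, if $x^2 = 0$, using that the components have $D_i^2 = -2$) produce an effective divisor of positive self-intersection, again contradicting $a(Y_0)=0$. In particular the intersection matrix $(D_i \cdot D_j)$ of the components of $C$ is negative definite, with $D_i^2 = -2$ for all $i$. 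By Artin's classification (or Du Val's), a connected configuration of smooth rational $(-2)$-curves with negative definite intersection matrix is precisely an ADE configuration, i.e. the exceptional divisor of the minimal resolution of a rational double point; this is exactly the characterization of $A_n$, $D_n$, $E_6$, $E_7$, $E_8$ dual graphs. This identifies $C$ as claimed.

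The main obstacle I expect is the second step — carefully excluding $D^2 = 0$ and more generally showing that the curve classes span a negative definite lattice — because this is where the hypothesis $a(Y_0) = 0$ must be used in an essential (and slightly delicate) way: one must rule out not only irreducible elliptic curves but also reducible effective divisors supported on $C$ with non-negative self-intersection, and the cleanest route is probably to invoke the structure theory of non-algebraic K3 surfaces (that their curves span a negative definite lattice, equivalently that $a(Y_0) = 0$ forbids any nef and big class among effective divisors) rather than to re-derive it. Once negative definiteness is in hand, the identification with an ADE configuration is a standard lattice-theoretic fact requiring only the routine verification that connected negative-definite graphs built from $(-2)$-vertices are exactly the Dynkin diagrams, which I would cite rather than reprove.
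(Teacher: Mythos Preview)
Your approach is correct and shares the core idea with the paper --- using Riemann--Roch together with $a(Y_0)=0$ to bound self-intersections --- but the paper's route is shorter and sidesteps exactly the step you flag as the main obstacle. Rather than first classifying irreducible curves and then arguing negative definiteness separately, the paper observes at once that \emph{every} nonzero effective divisor $D$ satisfies $D^2 \le -2$: since $a(Y_0)=0$ forces $h^0(L) \le 1$ for every line bundle, one has $h^0=1$ and $h^2=0$ for $L=\mathcal{O}(D)$, so $\chi = 1 - h^1$, and combining with $\chi = 2 + \tfrac12 D^2$ gives $D^2 = -2(1+h^1) \le -2$. For an arbitrary nonzero divisor $D = D_1 - D_2$ (positive and negative parts, no common component) one then has $D^2 \le (D_1+D_2)^2 \le -2$. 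Negative definiteness is thus immediate, and Grauert's criterion together with $K_{Y_0}$ trivial and \cite[Prop.~III.2.5]{BHPV} finishes.

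Your version works, but note that your Riemann--Roch argument already excludes $D^2 \ge 0$ for \emph{all} effective $D$, not only for $D^2>0$ or for irreducible $D$; once you see this, the separate treatment of $D^2=0$ and the ``perturbing'' maneuver in the negative-definiteness step become unnecessary. That maneuver is in fact slightly off: if the form were merely negative semidefinite with a nontrivial kernel, perturbing within the span would not yield positive self-intersection. The clean fix is to take any nonzero $x=\sum a_i D_i$ with $x^2\ge 0$, pass to $|x|=\sum |a_i|D_i$ (so $|x|^2\ge x^2\ge 0$ since $D_i\cdot D_j\ge 0$ for $i\ne j$), and apply Riemann--Roch to the effective divisor $|x|$ directly. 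What your approach buys is an explicit identification of the components as smooth rational $(-2)$-curves; what the paper's buys is brevity and the avoidance of the case analysis you anticipated.
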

\begin{proof}
Suppose that $D$ is a non-zero effective divisor on $Y_0$ that is not necessarily reduced.
Let $L = [D]$ denote the line bundle associated to $D$. 
Since $Y_0$ is a K3 surface, the Riemann-Roch formula gives 
\begin{align}
\label{rr}
\chi(Y_0,L) =  2 + \frac{1}{2} D^2.
\end{align}
Since $a(Y_0) = 0$, any line bundle on $Y_0$ has at most one section, up to scaling. Therefore, since $D$ is a non-zero effective divisor, we must have $h^0(Y_0,L) = 1$, and $h^0(Y_0,L^*) = 0$. Hence, using Serre duality, $\chi(Y_0,L)=1-h^1(Y_0,L)$. From \eqref{rr}, $D^2 = -2 ( 1 + h^1(Y_0,L))$. In particular, $D^2 \leq -2$.  If $D$ is {\textit{any}} non-zero divisor on $Y_0$, writing $D=D_1 - D_2$ with $D_1\ge 0$ and $D_2\ge 0$, we readily obtain $D^2 \le (D_1+D_2)^2 \le -2$. By Grauert's criterion, this means that the divisor $D$ can be contracted to a point complex analytically, and since $K_{Y_0}$ is trivial, the conclusion follows from \cite[Proposition~III.2.5]{BHPV}. 
\end{proof}
Note that Proposition \ref{prop:cK3} does not imply that the singularities of every normal K3 surface $Y$ with $a(Y)=0$ are rational double points.
Instead, we have: 
\begin{corollary}\label{c:cK3}
Suppose that $Y$ is a normal complex surface that is bimeromorphic to a smooth K3 surface with algebraic dimension zero. Then $Y$ has at most only rational tree singularities (recall Definition \ref{d:tsing})
and there is a diagram 
\begin{align}\label{cd:p}
\xymatrix{ 
& \tilde Y \ar[ld]_{\mu} \ar[rd]^{\bbb} & \\
Y & & Y_0
}
\end{align}
where $\mu$ is a very good resolution,
$\bbb$ is a composition of blowdowns of $(-1)$-curves, and $Y_0$ is a smooth K3 surface with $a(Y_0)=0$.
\end{corollary}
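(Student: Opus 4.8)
The plan is to reduce the problem to the smooth K3 surface $Y_0$ by passing to a minimal model, and then to apply Proposition~\ref{prop:cK3}. First I would fix a very good resolution $\mu : \tilde Y \rightarrow Y$. Since $Y$ is bimeromorphic to a K3 surface with algebraic dimension zero, so is $\tilde Y$; in particular $\tilde Y$ is a smooth compact complex surface with Kodaira dimension $0$, hence not ruled, so it has a unique minimal model, and repeatedly blowing down $(-1)$-curves gives a composition of blowdowns $\beta : \tilde Y \rightarrow Y_0$ onto it. Because $Y_0$ is bimeromorphic to the given K3 surface and is minimal, while a K3 surface has no $(-1)$-curves (the canonical bundle is trivial, so adjunction forbids them) and is therefore itself minimal, uniqueness of minimal models for non-ruled surfaces forces $Y_0$ to be a smooth K3 surface; moreover $a(Y_0) = a(\tilde Y) = a(Y) = 0$ by bimeromorphic invariance of the algebraic dimension. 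This produces the diagram \eqref{cd:p}, and it remains only to show that every singular point of $Y$ is a rational tree singularity.

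Next I would fix a singular point $y \in Y$ — necessarily isolated, since $Y$ is a normal surface — and set $E := \mu^{-1}(y)$, which is a connected curve by connectedness of fibers of a resolution over a normal point. By Proposition~\ref{p:scri} it suffices to prove $b_1(E) = 0$, equivalently, by \eqref{b1E}, that every irreducible component of $E$ is rational and that the dual graph of $E$ is a tree. The key move is to look at the image $C := \beta(E)$, a connected analytic subset of $Y_0$ of dimension at most one, together with its reduced preimage $\tilde C := \beta^{-1}(C)$, so that $E \subseteq \tilde C$. If $\dim C = 0$, then $\tilde C$ is the fiber of $\beta$ over a point, which is a connected tree of smooth rational curves, since the fiber of a composition of $(-1)$-blowdowns over a point of the base has this form. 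If $\dim C = 1$, then $C$ is a connected curve on $Y_0$, so by Proposition~\ref{prop:cK3} it is the exceptional divisor of the minimal resolution of a rational double point, i.e.\ an ADE configuration: a normal-crossing union of smooth rational $(-2)$-curves whose dual graph is a Dynkin diagram of type $A$, $D$ or $E$, in particular a tree.

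In either case $C$ is a normal-crossing union of smooth rational curves with tree dual graph, and I would then check that its reduced preimage $\tilde C$ under the composition of blowups $\beta$ has the same properties: blowing up a point off $C$, a smooth point of $C$, or a transversal double point of $C$ respectively leaves the dual graph unchanged, adds a leaf, or subdivides an edge, and never creates a triple point, so by induction on the number of blowups $\tilde C$ has all components rational and dual graph a forest. Since $E$ is a connected subcurve of $\tilde C$, the dual graph of $E$ is a connected induced subgraph of that of $\tilde C$, hence a tree, and all components of $E$ are rational; therefore $b_1(E) = 0$ and $y$ is a rational tree singularity by Proposition~\ref{p:scri}. As $y$ was arbitrary, $Y$ has at most rational tree singularities, and together with the diagram constructed in the first step this proves the corollary.

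The ingredients are essentially classical (existence and uniqueness of minimal models of non-ruled compact complex surfaces, the factorization of birational morphisms of smooth surfaces into blowups, the ADE description of rational double point configurations) together with elementary dual-graph combinatorics, so the only substantive point — and the step I expect to require the most care — is the reduction to $Y_0$ via $\beta$ followed by the verification that passing from $C$ to its preimage $\tilde C$ and then restricting to the subcurve $E$ preserves both the rationality of the components and the tree property of the dual graph.
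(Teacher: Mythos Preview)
Your proposal is correct and follows essentially the same approach as the paper: construct the diagram by passing to a minimal model of a very good resolution, invoke Proposition~\ref{prop:cK3} on the resulting smooth K3 surface $Y_0$, and then observe that pulling back along the composition of blowups $\beta$ preserves the property of being a tree of smooth rational curves. The paper argues a bit more globally (working with the union of all curves on $Y_0$ and $\tilde Y$ rather than one exceptional set $E$ at a time) and is terser about why blowups preserve the tree property, but the substance is the same.
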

\begin{proof}
Let $\mu:\tilde Y\rightarrow Y$ be a very good resolution of all singularities of $Y$, 
and let
$\bbb:\tilde Y\rightarrow Y_0$ be a composition of the blowdowns of $(-1)$-curves such that $Y_0$ has no $(-1)$-curve.
By assumption, $Y_0$ is a (smooth) K3 surface with $a(Y_0)=0$. Thus we obtain the diagram \eqref{cd:p}.

Proposition~\ref{prop:cK3} in particular means that any connected component of the union of all curves on $Y_0$ is a tree of smooth rational curves.
Since $\bbb$ is just a composition of ordinary blowups, this means that any connected component of the union of all curves on $\tilde Y$ is also a tree of smooth rational curves.
Therefore, so are the connected components of the exceptional curves of $\mu$.
Then every singularity of $Y$ is a rational tree singularity, by definition. 
\end{proof}
The next proposition will be used soon.
\begin{proposition} 
\label{p:blk3}
Under the situation of the previous corollary,
let $E$ and $F$ be the exceptional divisors of $\mu$ and $\bbb$ respectively. Then 
\begin{align*}
b_2(\tilde Y) = b_2(Y) + b_2(E) = b_2(Y_0) + b_2(F).
\end{align*}
Moreover, any connected curve on $\tilde Y$ can be complex analytically contracted to a smooth point or a rational double point.
\end{proposition}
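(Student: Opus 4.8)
The plan is to prove the two Betti number equalities by analyzing the blowdown maps $\mu$ and $\bbb$ separately, and then to deduce the contractibility statement by combining the structure theory already established.

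First I would handle the equality $b_2(\tilde Y) = b_2(Y_0) + b_2(F)$. Here $\bbb : \tilde Y \to Y_0$ is a composition of blowdowns of $(-1)$-curves, so it suffices to treat a single blowup of a smooth point on a smooth surface and then iterate. For a single blowup $\bbb_1 : Y' \to Y_0$ at a point $p$, one has $H^2(Y',\RR) \cong H^2(Y_0,\RR) \oplus \RR$, with the extra $\RR$ generated by the class of the exceptional $(-1)$-curve; equivalently, $b_2(Y') = b_2(Y_0) + 1$. Iterating over all the blowdowns composing $\bbb$, the total increase in $b_2$ equals the number of exceptional $(-1)$-curves introduced, which is exactly $b_2(F)$ since $F$ is a disjoint-from-cycles configuration of rational curves whose components are linearly independent in $H^2(\tilde Y,\RR)$; indeed since $Y_0$ is smooth and $\bbb$ is a sequence of point blowups, the dual graph of $F$ has the components as a basis, so $b_2(F)$ equals the number of components of $F$. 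This gives the second equality.

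Next I would handle $b_2(\tilde Y) = b_2(Y) + b_2(E)$. This is where the hypotheses of the corollary enter: by Corollary~\ref{c:cK3}, every singularity of $Y$ is a rational tree singularity, so each connected component of $E = \mu^{-1}(\Sing Y)$ is a tree of smooth rational curves, and in particular $b_1(E) = 0$ by Proposition~\ref{p:scri}. The argument is then a Mayer--Vietoris / excision computation: using Proposition~\ref{p:Lamotke}, a neighborhood $\tilde U$ of $E$ in $\tilde Y$ deformation retracts onto $E$, and $\mu$ collapses $E$ to the finite set $\Sing Y$; so $H^*(Y, Y \setminus \Sing Y) \cong H^*(\tilde Y, \tilde Y \setminus E)$ by excision, while $H^*(Y, Y\setminus \Sing Y)$ is concentrated appropriately since $\Sing Y$ is a finite set of points with contractible (cone) neighborhoods. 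Running the long exact sequences of the pairs $(Y, Y \setminus \Sing Y)$ and $(\tilde Y, \tilde Y \setminus E)$ and comparing, together with $b_1(E) = 0$ and the fact that the components of $E$ are linearly independent in $H^2(\tilde Y,\RR)$ (they span a space of dimension equal to the number of components, which is $b_2(E)$ since $E$ is a union of trees), one extracts $b_2(\tilde Y) = b_2(Y) + b_2(E)$. The main obstacle here is bookkeeping the exact sequences carefully enough to see that no correction terms survive — one needs $b_1(E) = 0$ to kill the potential contribution of $H^1$ of the boundary link, and one needs that $Y$ itself has $b_1(Y) = 0$ (from Proposition~\ref{prop:b1v}(ii)) and that $\Sing Y$ is finite so the relative cohomology in low degrees is controlled.

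Finally, for the contractibility claim: let $D$ be any connected curve on $\tilde Y$. Its image $\bbb(D)$ in $Y_0$ is a connected curve (or a point) on the smooth K3 surface $Y_0$ with $a(Y_0) = 0$, and by Proposition~\ref{prop:cK3} it is either a point or the exceptional divisor of the minimal resolution of a rational double point, hence can be contracted analytically to a point or a rational double point. Pulling back through $\bbb$ (a sequence of blowups), the total transform of $\bbb(D)$ together with the $\bbb$-exceptional curves it meets is again a tree of rational curves with negative definite intersection form — negative definiteness follows because contractibility of $\bbb(D)$ gives negative definiteness of its intersection form, and blowing up preserves negative definiteness of the relevant configuration (each blowup adds a $(-1)$-curve and the new intersection matrix remains negative definite, e.g. by Grauert's criterion applied on $Y_0$ and the fact that $\bbb$-fibers are contractible). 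Since $D$ is contained in such a configuration, by Grauert's contractibility criterion $D$ can be contracted analytically; and because $K_{\tilde Y}$ is (up to the $\bbb$-exceptional curves) pulled back from the trivial $K_{Y_0}$, the resulting singularity is a smooth point or a rational double point by \cite[Proposition~III.2.5]{BHPV}, as in the proof of Proposition~\ref{prop:cK3}. I expect the Betti number comparison via the relative cohomology sequences to be the technical heart; the contractibility assertion follows fairly directly from Proposition~\ref{prop:cK3} and Grauert's criterion.
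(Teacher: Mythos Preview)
Your strategy matches the paper's in outline, but there is a genuine error in the argument for $b_2(\tilde Y)=b_2(Y)+b_2(E)$. The claimed isomorphism $H^*(Y, Y\setminus\Sing Y)\cong H^*(\tilde Y, \tilde Y\setminus E)$ ``by excision'' is false: excision does not compare pairs sitting in different ambient spaces, and in degree~$2$ the left side vanishes (the cone neighborhoods are contractible and, by Proposition~\ref{p:scri2}, the links are $\QQ$-homology $3$-spheres) while the right side is isomorphic to $H^2(E)\neq 0$. The paper instead passes to the quotient $\ol Y:=\tilde Y/E$: from the good pair $(Y,\{p_1,\dots,p_k\})$ one gets $H_q(\ol Y)\simeq H_q(Y)$ for $q>1$, and from the good pair $(\tilde Y,E)$ one gets the exact piece $H_2(E)\to H_2(\tilde Y)\to H_2(\ol Y)\to H_1(E)$. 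Negative definiteness of the intersection matrix makes the first map injective and $H_1(E)=0$ because each $E_i$ is a rational tree, so the equality drops out directly --- with no need for $b_1(Y)=0$, contrary to what you suggest. Your route via the two long exact sequences can be made to work, but not through the stated excision isomorphism.

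For the contractibility statement, your Grauert step is fine (any connected curve on $\tilde Y$ sits inside a configuration that $\bbb$, possibly followed by the contraction of an ADE curve on $Y_0$, sends to a point, hence has negative definite intersection matrix). But your appeal to \cite[Proposition~III.2.5]{BHPV} to conclude the singularity is an RDP does not go through: $K_{\tilde Y}=\bbb^*K_{Y_0}+\sum a_jF_j$ with $a_j>0$, so $K_{\tilde Y}$ is not numerically trivial on curves meeting $F$, and $D$ may well contain such curves --- for instance a single $(-3)$-curve produced by iterated blowups, whose contraction is a cyclic quotient singularity, not an RDP. The paper sidesteps this by arguing entirely on $Y_0$ via Proposition~\ref{prop:cK3}: $\bbb(\tilde D)$ is either a point of the smooth surface $Y_0$ or a connected sub-curve of an ADE configuration, and a connected sub-curve of an ADE configuration is again ADE.
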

\begin{proof}
The equality $b_2(\tilde Y)  = b_2(Y_0) + b_2(F)$ is obvious since $\bbb$ is just the composition of ordinary blowups.
To show $b_2(\tilde Y) = b_2(Y) + b_2(E)$, 
let $p_1,\dots, p_k$ be all the singularities of $Y$ and
$E_1,\dots, E_k$ the exceptional curves over $p_1,\dots, p_k$ respectively.
From Corollary \ref{c:cK3},
each $E_i$ is a tree of smooth rational curves.
For simplicity, we put $E=E_1+\dots+E_k$.
We denote $\ol Y:=Y/E$, which is the surface obtained from $\ol Y$ by identifying all points of $E$.
Then the quotient map $\tilde Y\rightarrow \ol Y$ factors as 
$$
\tilde Y\stackrel{\mu}\lras Y \lras \ol Y,
$$
where the latter mapping is a quotient map that identifies $p_1,\dots, p_k$. If $k>1$, then $\ol Y$ is non-normal and the mapping $Y\rightarrow\ol Y$ is the normalization.
In the sequel, we omit the coefficient ring $\RR$ for all homology groups.
By the conical structure theorem from Milnor \cite[Theorem~2.10]{Milnor_sing} (see also \cite{BV} for the analytic case), 
$(Y,\{p_1,\dots, p_k\})$ is a good pair in the sense of \cite[p.~114]{Hatcher}. Hence, by \cite[Theorem~2.13]{Hatcher}, we have an exact sequence
\begin{align}
\cdots \lras \tilde H_q(\{p_1,\dots, p_k\})\lras \tilde H_q(Y) \lras \tilde H_q (\ol Y) \lras \tilde H_{q-1}(\{p_1,\dots, p_k\}) \lras \cdots
\end{align}  
for the reduced homology groups. From this, we obtain the isomorphism
\begin{align}\label{0reli1}
H_q(\ol Y)\simeq H_q(Y),\quad q>1.
\end{align}
Similarly, using Proposition \ref{p:Lamotke}, the pair $(\tilde{Y}, E)$ is a good pair, 
so we obtain an exact sequence
\begin{align}\label{0les1}
\cdots
\lras H_2(E) \lras 
H_2(\tilde Y)\lras H_2(\ol Y)
\lras 
H_1(E) \lras \cdots.
\end{align}
Since the intersection matrix for each $E_i$ is negative definite and $E_i\cap E_j=\emptyset$ if $i\neq j$, the homology classes of any irreducible components of $E$ are linearly independent in $H_2(\tilde Y)$.
This means that the homomorphism $H_2(E) \rightarrow 
H_2(\tilde Y)$ in the exact sequence \eqref{0les1} is injective.
On the other hand, $H_1(E)=\oplus_{i=1}^k H_1(E_i) = 0$ because all $E_1,\dots, E_k$ are trees of smooth rational curves.
Hence, from the exact sequence \eqref{0les1}, 
$b_2(\tilde Y) = b_2(\ol Y) + b_2(E).$ From \eqref{0reli1}, this means 
\begin{align}\label{0betti1}
b_2(\tilde Y) = b_2(Y) + b_2(E).
\end{align}

Finally, let $\tilde D$ be any connected curve on $\tilde Y$.
The image $\bbb(\tilde D)$ is either a point or a curve.
If it is a point, then the proof is over as $Y_0$ is smooth.
If it is a curve, then by Proposition~\ref{prop:cK3}, it has to be a sub-curve of the exceptional curve of a rational double point. But such a sub-curve is again an exceptional curve of a rational double point.
Therefore, $\bbb(\tilde D)$ itself is an exceptional curve of a rational double point.
This means that the curve $\bbb(\tilde D)$ can be contracted to a rational double point.
\end{proof}
Returning to the situation of Theorem~\ref{thm:main1} with the reduction procedure in Subsection~\ref{ss:Pre} being applied, let $f:Z\rightarrow Y$ be a holomorphic mapping with connected fibers from a 3-dimensional compact complex manifold $Z$ with $b_1(Z)=b_2(Z)=0$ and $\chi_{\rm top}(Z)\neq 0$ onto a reduced normal compact complex surface $Y$ with $a(Y)=0$. 
By Proposition~\ref{prop:b1v} (i), a regular fiber of $f$ is an elliptic curve.
By Proposition~\ref{prop:K3}, $Y$ is bimeromorphic to a K3 surface $Y_0$ with $a(Y_0)=0$.
Since any compact smooth complex surface with algebraic dimension zero has at most a finite number of curves by \cite{Kr75}, 
so does our surface $Y$ because if not, then a desingularization of $Y$ would have an infinite number of curves.
\begin{definition}
\label{d:Sigma} The subset $\Sigma \subset Y$ is the discriminant locus of $f$ as in  \eqref{fb1} together with {\em all curves} in $Y$. In other words, $\Sigma$ is the union of all curves on $Y$ and all isolated discriminant points of $f$. We let $D  := f\inv (\Sigma) \subset Z$. 
\end{definition}
The subset $\Sigma$ is a proper analytic subset of $Y$. Indeed, if $\Sigma$ were the empty set, then $f:Z\rightarrow Y$ itself would be a differentiable $T^2$-bundle, which would mean that $\chi_{\rm top}(Z) = 0$ (a fact which follows from the Leray spectral sequence). 
This contradicts our assumption, so $\Sigma$ is not the empty set.
Note that $Y\minus\Sigma$ is still connected as $\Sigma$ is analytic.
The set $D$ is an analytic subset of $Z$ and its irreducible components are either 1-dimensional or 2-dimensional. We denote $f' = f_{Z \minus D}$, and then $f':Z\minus D\rightarrow Y\minus \Sigma$ is still a $T^2$-bundle differentiably.

The next proposition plays a key role in proving Theorem \ref{thm:main1} in the case $a(Y)=0$.
\begin{proposition}\label{p:b2bdd}
Let $f:Z\rightarrow Y$ be as above.
Let $n_Y$ be the number of irreducible components of all curves on $Y$.
Then we have an inequality
$$
b_2(Y) \le n_Y + 2.
$$
\end{proposition}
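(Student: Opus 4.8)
The plan is to apply the inequality \eqref{b2Y0} of Proposition~\ref{p:b2bdd0} with the specific closed subset $\Sigma$ from Definition~\ref{d:Sigma}, and then to estimate each of the three terms on the right-hand side separately. Recall that with this choice, $f' : Z\minus D \to Y\minus\Sigma$ is a differentiable $T^2$-bundle over the connected base $Y\minus\Sigma$. So the task reduces to bounding $b_2(Y,Y\minus\Sigma)$, $b_1(Y\minus\Sigma)$, and $\dim H^0(Y\minus\Sigma, R^1 f'_*\RR)$.

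First I would handle the local term $H^0(Y\minus\Sigma, R^1 f'_*\RR)$. Since $f'$ is a $T^2$-bundle, $R^1 f'_*\RR$ is a local system of rank $2$ on $Y\minus\Sigma$; its global sections are the monodromy-invariant vectors in $H^1(T^2,\RR)\cong\RR^2$, so $\dim H^0(Y\minus\Sigma, R^1 f'_*\RR) \le 2$. This contributes the $+2$. Next, for the relative term $b_2(Y,Y\minus\Sigma)$, I would use that $\Sigma$ decomposes into the curves on $Y$ together with finitely many isolated discriminant points, and that the isolated points contribute nothing to $H_2$ of the pair (their links are lens-space-type $3$-manifolds, but more simply, removing finitely many points from a surface does not change $H_2$ relatively in the range we need — by excision and the conical structure this is a controlled local computation). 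For the curve part, one uses a tubular/regular neighborhood $N$ of the union of curves: by excision $H_2(Y,Y\minus\Sigma;\RR)\cong H_2(N,N\minus(\text{curves});\RR)$, and the number of independent relative $2$-classes is governed by the number of irreducible components of those curves — this is where the bound $n_Y$ enters. The cleanest way is to invoke the deformation-retraction results (Proposition~\ref{p:Lamotke} and the good-pair machinery already used in Proposition~\ref{p:blk3}) to identify $H_2(Y,Y\minus\Sigma)$ with the quotient of $Y$ by collapsing $\Sigma$, and then count: each irreducible curve component gives at most one class, so $b_2(Y,Y\minus\Sigma)\le n_Y$ (plus a correction one must check vanishes). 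Finally, $b_1(Y\minus\Sigma)\ge 0$ trivially, and since it enters with a minus sign it only helps.

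Assembling, \eqref{b2Y0} gives $b_2(Y) \le b_2(Y,Y\minus\Sigma) - b_1(Y\minus\Sigma) + \dim H^0(Y\minus\Sigma,R^1 f'_*\RR) \le n_Y - b_1(Y\minus\Sigma) + 2 \le n_Y + 2$, as desired.

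The main obstacle I expect is the careful computation of $b_2(Y,Y\minus\Sigma)$: one must verify that the isolated discriminant points genuinely contribute nothing (using that their local neighborhoods are cones on $3$-manifolds, so $H_2$ of the relative pair is zero there), that the normal-crossing or tree structure of the curves on $Y$ — guaranteed by Corollary~\ref{c:cK3}, which says all singularities of $Y$ are rational tree singularities — forces $H_2$ of a neighborhood relative to its complement to have dimension at most the number of irreducible curve components, and that passing between $Y$ and its resolution $\tilde Y$ (where the analogous statement is the classical one about exceptional/curve configurations, cf. the argument in Proposition~\ref{p:blk3}) does not introduce extra classes. Everything else is a short diagram chase or a dimension count using the local system structure of $R^1 f'_*\RR$.
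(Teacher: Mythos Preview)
Your overall strategy matches the paper's exactly: apply \eqref{b2Y0}, bound the local-system term by $2$, drop the $-b_1(Y\minus\Sigma)$ term, and reduce to showing $b_2(Y,Y\minus\Sigma)\le n_Y$ via excision into neighborhoods $N_i$ of the curve components $\Sigma_i$ and neighborhoods $M_j$ of the isolated points $p_j$. Your handling of the local system and of the isolated points is correct (the latter works precisely because rational tree singularities have $\QQ$-homology sphere links, Proposition~\ref{p:scri2}).

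The one place your sketch is imprecise is the mechanism for the curve contribution. ``Collapsing $\Sigma$'' and ``each irreducible component gives at most one class'' do not directly yield $\dim H^2(N_i, N_i\minus\Sigma_i)$ equal to the number of components of $\Sigma_i$; and passing to the resolution $\tilde N_i$ to use the classical count does not obviously help, since $\tilde\Sigma_i = \mu^{-1}(\Sigma_i)$ has \emph{more} irreducible components than $\Sigma_i$. The paper's key step is instead: after resolving to $\tilde N_i$, \emph{contract} the entire connected curve $\tilde\Sigma_i$ to a single point $q_i$, which is possible by Proposition~\ref{p:blk3} and produces a smooth point or a rational double point. This gives $N_i\minus\Sigma_i \simeq \tilde N_i\minus\tilde\Sigma_i \simeq \widehat N_i\minus\{q_i\}$, homotopy equivalent to the link of $q_i$, a $\QQ$-homology $3$-sphere. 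Hence $H^1(N_i\minus\Sigma_i)=H^2(N_i\minus\Sigma_i)=0$, and the long exact sequence of the pair yields $H^2(N_i,N_i\minus\Sigma_i)\simeq H^2(N_i)\simeq H^2(\Sigma_i)$, whose dimension is exactly the number of irreducible components of $\Sigma_i$. Summing over $i$ gives $b_2(Y,Y\minus\Sigma)=n_Y$. This resolve-then-contract step is the substantive input you correctly flagged as the main obstacle, and Proposition~\ref{p:blk3} (hence ultimately Proposition~\ref{prop:cK3} on curves on a K3 with $a=0$) is precisely what makes it go through.
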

\begin{proof}
As above, the mapping $f':Z\minus D\rightarrow Y\minus\Sigma$ is differentiably a $T^2$-bundle. Therefore, over the open subset $Y\minus\Sigma$ of $Y$, the sheaf $R^1f'_*\RR$ is a locally constant sheaf and its fiber at a point $y\in Y\minus\Sigma$ can be identified with the cohomology group $H^1(f\inv(y),\RR)$, which is isomorphic to $H^1(T^2, \RR)\simeq\RR^2$.
This sheaf is determined by a representation $\rho : 
\pi_1(Y \setminus \Sigma) \rightarrow PSL(2, \ZZ)$ (see \cite[Section~IV.9]{Iversen}) 
and the space of sections is identified with the invariant elements under the monodromy 
\cite[Proposition~5.5.14]{DavisKirk}.
Consequently, $\dim H^0(Y\minus\Sigma,R^1 f'_*\RR) \le 2$. 
From \eqref{b2Y0} in Proposition \ref{p:b2bdd0}, this means
\begin{align}\label{b2Y1}
b_2(Y) \le b_2(Y,Y\minus\Sigma) + 2.
\end{align}
So to prove the proposition, it suffices to show $b_2(Y,Y\minus\Sigma) \le n_Y$.

In the following, the coefficient ring of all cohomology groups are $\RR$.
For showing the inequality, let $\Sigma_1,\dots, \Sigma_k$ be the connected components of $\Sigma$ which are (complex) 1-dimensional and $p_1,\dots, p_l$ the zero-dimensional components of $\Sigma.$ 
From Corollary \ref{c:cK3}, 
the point $p_i$ ($1\le i\le l$) is either a smooth point of $Y$
or a rational tree singularity of $Y$.
Take sufficiently small open neighborhoods $N_1,\dots, N_k$ of $\Sigma_1,\dots, \Sigma_k$ respectively and also sufficiently small open neighborhoods $M_1,\dots, M_l$ of $p_1,\dots, p_l$ respectively so that all these are disjoint and
\begin{align}\label{ds01}
H^2 (Y,Y\minus\Sigma) \simeq \Big(\bigoplus_{i=1}^k H^2 (N_i,N_i\minus\Sigma_i)\Big)
\oplus \Big(\bigoplus_{i=1}^l H^2 (M_i,M_i\minus\{p_i\})\Big)
\end{align}
holds by excision.

First, we show $H^2 (M_i,M_i\minus\{p_i\})=0$ for any $i$ for some choice of the neighborhood $M_i$. This is obvious if $p_i$ is a smooth point of $M_i$.
Hence, we suppose that $M_i$ has a singularity at $p_i$, which is a rational tree singularity as above.
We consider the long exact sequence associated with the pair $(M_i,M_i\minus\{p_i\})$, which is
\begin{align}\label{rc01}
\cdots\lras
H^1 (M_i)\lras
H^1 (M_i\minus\{p_i\})
\lras
H^2 (M_i,M_i\minus\{p_i\})\lras
H^2 (M_i)\lras\cdots.
\end{align}
Let $L_i$ be the link of the singularity $p_i$.
By the conical structure theorem, the neighborhood $M_i$ can be taken in such a way that it is homeomorphic to the cone over the link $L_i$.
This means $H^q(M_i\minus\{p_i\})\simeq H^q(L_i)$ for any $q\ge 0$.
Further, since $p_i$ is a rational tree singularity, by Proposition \ref{p:scri2},
$L_i$ is a $\QQ$-homology 3-sphere.
Furthermore, since $M_i$ is contractible to the one point $p_i$ by the conical structure theorem, $H^q(M_i) = 0$ for any $q>0$.
Therefore, from \eqref{rc01}, we obtain 
$
H^2 (M_i,M_i\minus\{p_i\}) \simeq H^1(L_i) = 0.
$

Next, we calculate the first direct summand of \eqref{ds01} in a similar way.
By the above choice, $N_i$ can have singularities, but all of them belong to the 1-dimensional connected component $\Sigma_i$ of $\Sigma$.
Let $\tilde N_i\rightarrow N_i$ be a very good resolution of all singularities of $N_i$ (if any), and $\tilde\Sigma_i$ the inverse image (not the proper transform) of $\Sigma_i$ by the resolution mapping. Then 
since $\tilde\Sigma_i$ is obviously connected, by Proposition \ref{p:blk3}, it can be contracted to a smooth point or a rational double point.
Let $\tilde N_i\rightarrow \widehat N_i$ be such a contraction of $\tilde\Sigma_i$ and $q_i$ the image point of $\tilde\Sigma_i$ by the contraction mapping. Then we have natural isomorphisms
\begin{align}\label{isoms}
N_i\minus\Sigma_i\simeq \tilde N_i\minus\tilde\Sigma_i
\simeq \widehat N_i\minus\{q_i\}.
\end{align}
Further, by the conical structure theorem, if $L_i$ is the link of $q_i$, then $\widehat N_i\minus\{q_i\}$ can be deformation retracted onto $L_i$, including the case where $q_i$ is a smooth point of $\widehat N_i$.
So from~\eqref{isoms}, $N_i\minus\Sigma_i$ is homotopic to $L_i$, and this is a $\QQ$-homology 3-sphere since $q_i$ is a rational double point.

We then consider the long exact sequence of cohomology groups associated with the pair $(N_i,N_i\minus\Sigma_i)$, which is
\begin{align}\label{rc02}
\cdots\lras
H^1 (N_i\minus\Sigma_i)\lras
H^2 (N_i,N_i\minus\Sigma_i)\lras
H^2 (N_i)\lras
H^2 (N_i\minus\Sigma_i)\lras\cdots.
\end{align}
Since $N_i\minus\Sigma_i$ is homotopic to a $\QQ$-homology 3-sphere as above, $H^1 (N_i\minus\Sigma_i)=H^2 (N_i\minus\Sigma_i)=0$.
Therefore, from \eqref{rc02}, for any $i$, we obtain an isomorphism
$
H^2 (N_i,N_i\minus\Sigma_i)\simeq H^2 (N_i).
$
Further, since $N_i$ admits a strong deformation retraction onto $\Sigma_i$ by Proposition~\ref{p:Lamotke}, $H^2 (N_i)\simeq H^2(\Sigma_i)$. Therefore we obtain for any $i$
\begin{align}
H^2 (N_i,N_i\minus \Sigma_i )\simeq H^2(\Sigma_i).
\end{align}
Hence, from \eqref{ds01}, we obtain 
\begin{align}
\label{ds02}
H^2 (Y,Y\minus\Sigma) \simeq \bigoplus_{i=1}^l H^2(\Sigma_i).
\end{align}
Since $\Sigma_i$ is a curve, $\dim H^2(\Sigma_i)$ is equal to the number of the irreducible components of $\Sigma_i$.
Since $\Sigma$ includes all curves on $Y$ as in Definition \ref{d:Sigma},
this implies $\dim H^2 (Y,Y\minus\Sigma) = n_Y$.
From \eqref{b2Y1}, this means $b_2(Y)\le n_Y + 2.$

\end{proof}
From the reduction procedure given in Subsection \ref{ss:Pre}, the following proposition completes a proof of Theorem \ref{thm:main1} when $a(Y)=0$.
\begin{proposition}\label{p:aneq0}
Let $f:Z\rightarrow Y$ be a holomorphic mapping with connected fibers from a 3-dimensional connected compact complex manifold $Z$ satisfying $b_1(Z) = b_2(Z) = 0$ and $\chi_{\rm top}(Z)\neq 0$ to an irreducible normal compact surface $Y$ with $a(Y)=0$.
Then the mapping $f$ cannot be surjective.
\end{proposition}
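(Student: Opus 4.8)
The plan is to derive a contradiction by pitting the upper bound $b_2(Y)\le n_Y+2$ of Proposition~\ref{p:b2bdd} against a lower bound $b_2(Y)\ge n_Y+3$ coming from the K3 structure of $Y$. So suppose, for contradiction, that $f$ is surjective. By Proposition~\ref{prop:K3} and Corollary~\ref{c:cK3}, $Y$ has only rational tree singularities and fits into a diagram \eqref{cd:p}
\begin{align*}
Y\ \xleftarrow{\mu}\ \tilde Y\ \xrightarrow{\bbb}\ Y_0,
\end{align*}
in which $\mu$ is a very good resolution with exceptional divisor $E$, and $\bbb$ realizes $\tilde Y$ as an iterated blowup of (possibly infinitely near) points of a smooth K3 surface $Y_0$ with $a(Y_0)=0$, with exceptional divisor $F$; recall $b_2(Y_0)=22$. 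For a surface $X$, write $n_X$ for the number of irreducible components of all curves on $X$; by \cite{Kr75} we have $n_{Y_0}<\infty$, hence $n_{\tilde Y},n_Y<\infty$ too.

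The first step is the bound $n_{Y_0}\le 19$. By Proposition~\ref{prop:cK3}, every connected curve on $Y_0$ is the exceptional set of the minimal resolution of a rational double point, hence an ADE configuration of smooth $(-2)$-curves whose components span a negative definite sublattice of $H^2(Y_0,\ZZ)$ of rank equal to their number. The distinct connected components of the union of all curves on $Y_0$ are pairwise disjoint, hence mutually orthogonal, so the classes of all $n_{Y_0}$ irreducible curve components of $Y_0$ together span a negative definite subspace of $H^2(Y_0,\RR)$. Since this space carries the K3 intersection form, of signature $(3,19)$, a negative definite subspace has dimension at most $19$, so $n_{Y_0}\le 19$.

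The second step is a bookkeeping identity $n_Y-b_2(Y)=n_{Y_0}-22$, obtained by comparing curve counts across $\mu$ and $\bbb$. A single point blowup keeps the strict transforms of curves irreducible and adds exactly one new exceptional irreducible curve, while increasing $b_2$ by $1$; iterating, $n_{\tilde Y}-n_{Y_0}$ equals the number of irreducible components of $F$, which equals $b_2(F)$. On the other side, $E$ is a disjoint union of trees of smooth rational curves by Corollary~\ref{c:cK3}, so $b_2(E)$ is its number of irreducible components; as $\mu$ is an isomorphism over $Y_{\reg}$, the strict transform sets up a bijection between the irreducible curves of $Y$ and the irreducible curves of $\tilde Y$ not contained in $E$, whence $n_{\tilde Y}-n_Y=b_2(E)$. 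Feeding in $b_2(\tilde Y)=b_2(Y)+b_2(E)=b_2(Y_0)+b_2(F)$ from Proposition~\ref{p:blk3}, we compute
\begin{align*}
n_Y-b_2(Y)=\big(n_{\tilde Y}-b_2(E)\big)-b_2(Y)=n_{\tilde Y}-b_2(\tilde Y)=\big(n_{Y_0}+b_2(F)\big)-b_2(\tilde Y)=n_{Y_0}-22.
\end{align*}
Hence $b_2(Y)=n_Y+22-n_{Y_0}\ge n_Y+3$, contradicting Proposition~\ref{p:b2bdd}; therefore $f$ cannot be surjective. The step most in need of care is this last bookkeeping: verifying that the strict transform really gives the claimed bijection of irreducible curves and that $b_2(E)$, $b_2(F)$ literally count irreducible components — together with the standard lattice input that a negative definite subspace of the K3 intersection form has dimension $\le 19$.
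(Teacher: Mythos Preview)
Your proof is correct and follows essentially the same route as the paper's: both combine the upper bound $b_2(Y)\le n_Y+2$ from Proposition~\ref{p:b2bdd} with the identity $b_2(\tilde Y)=b_2(Y)+b_2(E)=22+b_2(F)$ from Proposition~\ref{p:blk3} and the curve-counting relation $n_{Y_0}=n_Y+b_2(E)-b_2(F)$, and then contradict the negative-definiteness of the curve lattice on $Y_0$. The only cosmetic differences are that the paper phrases the contradiction as $n_{Y_0}\ge 20$ directly (rather than as $b_2(Y)\ge n_Y+3$) and invokes the signature $(1,19)$ on $H^{1,1}(Y_0,\RR)$ instead of $(3,19)$ on $H^2(Y_0,\RR)$; since the curve classes lie in $H^{1,1}$ anyway, both yield the same bound $n_{Y_0}\le 19$.
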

\begin{proof}
Suppose that the mapping $f:Z\rightarrow Y$ is surjective.
By Proposition \ref{prop:K3}, this implies that $Y$ is bimeromorphic to a K3 surface and by Proposition \ref{p:b2bdd}, $Y$ satisfies 
\begin{align}\label{b2Y2}
b_2(Y)\le n_Y+2,
\end{align}
where $n_Y$ is as before the number of irreducible components of all curves on $Y$.
Consider again the diagram \eqref{cd:p}, which was
\begin{align}\label{cd:p2}
\xymatrix{ 
& \tilde Y \ar[ld]_{\mu} \ar[rd]^{\bbb} & \\
Y & & Y_0
}
\end{align}
By Proposition \ref{p:blk3}, if $E$ and $F$ are as before the exceptional divisors of $\mu$ and $\bbb$ respectively, then $b_2(\tilde Y) = b_2(Y) + b_2(E)$.
Hence, from \eqref{b2Y2}, $b_2(\tilde Y) \le n_Y + 2 + b_2(E)$.
On the other hand, from the same proposition, $b_2(\tilde Y) = b_2(Y_0) + b_2(F)$.
From these, using $b_2(Y_0)=22$ as $Y_0$ is a smooth K3 surface, we obtain $n_Y+b_2(E)- b_2(F)\ge 20$.
But $n_Y+b_2(E)- b_2(F)$ is exactly the number of irreducible components of all curves on the smooth K3 surface $Y_0$.
Since the intersection matrix formed by all irreducible components of curves on $Y_0$ is negative definite by Proposition \ref{prop:cK3}, this contradicts that the signature of the intersection form on $H^{1,1}(Y_0,\RR)$ on any K3 surface is $(1,19)$.
Therefore, the mapping $f$ cannot be surjective.
\end{proof}

\section{Normal elliptic surfaces}  
\label{s:elliptic}
In the section, we prove a number of results on normal elliptic surfaces (allowing singularities) which will be used in Section~\ref{s:a1case}. 
We begin with the following proposition.
\begin{proposition}\label{p:a1s}
Let $Y$ be a normal compact complex surface with $a(Y)=1$, then the following hold:
\begin{enumerate}
\item[(i)]
There exists a surjective holomorphic mapping $\pi:Y\rightarrow C$ onto a smooth compact curve $C$  whose generic fiber is a smooth elliptic curve.
 
\item[(ii)]
Every connected divisor on $Y$ is contained in a fiber of $\pi$.

\item[(iii)]
Any singularity of $Y$ is either a rational tree singularity, a unicyclic singularity, or an elliptic tree singularity (recall Definition~\ref{d:tsing}).

\item[(iv)] If $b_1(Y) = 0$, then $C \simeq \PP^1$. 
\end{enumerate}
\end{proposition}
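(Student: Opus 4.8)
The plan is to establish (i)--(iv) in that order, with (i) producing the elliptic fibration on which everything else rests.

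\emph{Part (i).} I would pass to a resolution $\mu\colon \tilde Y\to Y$. Algebraic dimension is a bimeromorphic invariant, so $a(\tilde Y)=1$, and the classification of compact complex surfaces (see \cite{BHPV}) shows that such a $\tilde Y$ is an elliptic surface: there is an elliptic fibration $\tilde\pi\colon \tilde Y\to C$ onto a smooth compact curve, with connected fibers and smooth elliptic generic fiber, realizing the algebraic reduction, so $\CC(C)=\CC(\tilde Y)$. The key point is that $\tilde\pi$ descends along $\mu$: since $\mu$ is bimeromorphic, $\CC(\tilde Y)=\CC(Y)$, so every meromorphic function on $\tilde Y$ is a pullback from $Y$ and hence constant on each fiber of $\mu$; pulling back a nonconstant meromorphic function on $C$ then forces $\tilde\pi$ to be constant on each connected component of each $\mu$-fiber. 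Therefore $\tilde\pi=\pi\circ\mu$ for a surjection $\pi\colon Y\to C$, which is holomorphic by normality of $Y$ (Riemann extension), still has connected fibers, and over a general $c\in C$ satisfies $\pi^{-1}(c)\subseteq Y_{\reg}$ with $\mu$ an isomorphism there, so the generic fiber of $\pi$ is a smooth elliptic curve.

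\emph{Part (ii).} Let $D$ be a connected divisor on $Y$ and suppose some component $D_i$ dominates $C$. Its proper transform $\tilde D_i$ on $\tilde Y$ also dominates $C$, so $\tilde D_i\cdot \tilde F>0$ for a general fiber $\tilde F$ of $\tilde\pi$; since $\tilde F^2=0$, this gives $(\tilde D_i+k\tilde F)^2\to\infty$, so $\tilde Y$ carries a line bundle of positive self-intersection and is therefore Moishezon, contradicting $a(\tilde Y)=1$. Hence no component of $D$ dominates $C$, so each $\pi(D_i)$ is a point, and as $\Supp D$ is connected its image is a single point of $C$.

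\emph{Part (iii).} Fix $y\in\Sing Y$, a very good resolution $\mu\colon \tilde Y\to Y$ and $E=\mu^{-1}(y)$. Applying (ii) to $\tilde Y$ (which has $a(\tilde Y)=1$, with $\tilde\pi$ its elliptic fibration) shows $E$ lies in one fiber $\tilde\pi^{-1}(c)$. Pass to a relatively minimal model $\phi\colon \tilde Y\to X$ with $X\to C$ relatively minimal elliptic, so $\phi(E)$ lies inside a Kodaira fiber. Each blowdown in $\phi$ contracts a $(-1)$-curve $\ell$ lying in a fiber, and since then $\ell\cdot(\text{fiber}-\ell)=-\ell^2=1$, such an $\ell$ meets the rest of $E$ in at most one point (a leaf of $E$ when $\ell\subseteq E$), so $b_1(E)=b_1(\phi(E))$ and, since contracted components are rational, $E$ has an elliptic component exactly when $\phi(E)$ does. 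A pass through Kodaira's classification of singular fibers then shows that the reduced support of any fiber — and \emph{a fortiori} any connected subcurve, a proper connected subcurve of a type $I_n$ fiber being a chain — satisfies $b_1\le 2$, with value $2$ only for a (multiple of a) smooth elliptic fiber and value $1$ only in the cyclic cases (type $I_n$, $n\ge 1$). By Proposition~\ref{p:scri}, $y$ is a rational tree, unicyclic, or elliptic tree singularity.

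\emph{Part (iv).} Since $\pi$ has connected fibers, the Leray five-term sequence \eqref{Leray5} applied to $\pi$ gives an injection $\pi^{*}\colon H^{1}(C;\RR)\hookrightarrow H^{1}(Y;\RR)$; if $b_1(Y)=0$ then $b_1(C)=0$, so $C$ has genus zero, i.e.\ $C\simeq\PP^1$. The main obstacle is the combination of (i) and (iii): descending the algebraic-reduction elliptic fibration to the singular surface $Y$, and then pinning down which exceptional configurations occur, which forces the relative-minimalization step together with a careful inspection of Kodaira's list; the positivity criterion forcing $\tilde Y$ to be Moishezon in (ii) is the other essential input.
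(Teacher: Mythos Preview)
Your treatments of (i), (ii), and (iv) are correct and essentially match the paper's: descend the algebraic reduction of a resolution via Riemann extension, rule out horizontal divisors by a positivity/Moishezon argument (the paper simply quotes \cite[Prop.~VI.5.1]{BHPV} for this), and read off $C\simeq\PP^1$ from the Leray five-term sequence.

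In (iii), however, there is a concrete error. Your key intermediate claim ``$\ell\cdot(\text{fiber}-\ell)=-\ell^2=1$, so $\ell$ is a leaf'' is only valid when the $(-1)$-curve $\ell$ has multiplicity $1$ in the fiber; in general $\ell\cdot(\text{fiber}-m_\ell\,\ell)=m_\ell$. Consequently the asserted equality $b_1(E)=b_1(\phi(E))$ fails. For a counterexample, blow up one node of an $I_2$ fiber to obtain a $3$-cycle $C_1'\cup C_2'\cup E_3$ with $E_3$ the $(-1)$-curve; take $E=C_1'\cup C_2'$ (a chain, $b_1=0$, with negative-definite intersection matrix, so it is a legitimate exceptional configuration). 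Then $\phi(E)=C_1\cup C_2$ is the original $I_2$ fiber with $b_1=1$.

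The fix is painless: replace the equality by the inequality $b_1(E)\le b_1(\phi(E))$. An Euler-characteristic count shows that contracting a smooth rational component $\ell\subseteq E$ leaves $b_1$ unchanged, while contracting a $(-1)$-curve $\ell\not\subseteq E$ only identifies the points of $E\cap\ell$ and hence can only raise $b_1$. Since connected subcurves of a Kodaira fiber have $b_1\le 2$, you get $b_1(E)\le 2$; and if $b_1(E)=2$ then $b_1(\phi(E))=2$ forces $\phi(E)$ to be a smooth elliptic fiber, so $\tilde F$ is a tree with a unique elliptic component which $E$ must contain. Proposition~\ref{p:scri} then finishes as you intended.

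For comparison, the paper's proof of (iii) avoids this invariant entirely: it runs a direct case analysis on the Kodaira type of the relatively minimal fiber $\pi_0^{-1}(p)$ and tracks how the very-good condition on $\mu$ restricts which sub-configurations $E$ can be (e.g.\ $E$ cannot contain a nodal/cuspidal component, nor all three branches at a type~IV triple point). Your $b_1$ approach is cleaner once the inequality is stated correctly; the paper's approach is more explicit about which Kodaira types produce which singularity types.
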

\begin{proof}
Let $\mu:\tilde Y\rightarrow Y$ be a birational holomorphic mapping that gives a very good resolution for all singularities of $Y$.
Then since $a(\tilde Y)=a(Y)=1$, there exists a smooth projective curve $C$ and a surjective holomorphic mapping $\tilde\pi:\tilde Y\rightarrow C$ with connected fibers whose generic fibers are (smooth) elliptic curves \cite[Proposition~VI.5.1]{BHPV}.
As shown in the proof of the same proposition,
any connected divisor on $\tilde Y$ is contained in a fiber of $\tilde \pi$. In particular, every irreducible component of the exceptional divisor of $\mu$ is contained in a fiber.
This means that, around each singularity of $Y$, the composition $\tilde\pi\circ\mu\inv$ defined on $Y\minus\Sing Y$ can be regarded as a holomorphic function in a punctured neighborhood of the singularity.
By Riemann's extension theorem on a normal complex space \cite[p.~144]{GR}, this has a unique holomorphic extension to the singularity. 
This means that $\tilde\pi\circ\mu\inv$ is a holomorphic mapping from $Y$ to $C$.
Let $\pi$ be this mapping. 
Obviously, a generic fiber of $\pi$ is also a smooth elliptic curve.
Moreover, there exists no divisor on $Y$ that is mapped onto $C$ by $\pi$ because this is true for $\tilde\pi$.

In the following we will freely use Kodaira's classification of fibers on a smooth elliptic surface \cite{Kodaira1963}. To prove (iii), we consider the following commutative diagram
\begin{align}\label{cd:p4}
\xymatrix{ 
Y\ar[rd]_{\pi} & \tilde Y\ar[l]_{\mu} \ar[r]^{\bbb} \ar[d]^{\tilde\pi} & Y_0\ar[ld]^{\pi_0}\\
& C &
}
\end{align}
where $\pi_0:Y_0\rightarrow C$ is a relatively minimal model of $\tilde \pi$ and $\bbb$ is a composition of the blowdowns of $(-1)$-curves in fibers of $\tilde\pi$.
Let $E$ be any connected component of the exceptional divisor of $\mu$ and put $p:=\tilde\pi(E)\in C$ and $\tilde F:= \tilde\pi\inv(p)$. 
All irreducible components of $E$ are smooth because $\mu$ is a very good resolution.
Since the geometric genus of any irreducible component of a fiber of $\pi_0$ is at most one by Kodaira's classification of the singular fiber, the same is true for $\tilde\pi$. 
In particular, all irreducible components of $\tilde F$ and therefore also of $E$ have geometric genus at most one.

Suppose that the fiber $\tilde F$ has a component which is a smooth elliptic curve. Then the fiber $\pi_0\inv(p)$ has to be a smooth elliptic curve (including the case of multiple one) and 
the elliptic component in $\tilde F$ has to be the strict transform of $\pi_0\inv(p)$.
Since $\bbb$ is a composition of ordinary blowups, this means that $\tilde F$ is a tree of smooth curves and all components except for the elliptic component of $\tilde F$ are rational curves.
This means that, if $E$ does not contain the elliptic component, then the point $\mu(E)$ is a rational tree singularity, and otherwise the point $\mu(E)$ is an elliptic tree singularity.

Next, suppose that the fiber $\tilde F$ does not have an elliptic component.
This means that the fiber $\pi_0\inv(p)$ is a singular fiber, where a multiple smooth fiber is understood to be a smooth fiber.
Again by Kodaira's classification, all irreducible components of $\pi_0\inv(p)$
have a geometric genus zero and therefore so do 
the irreducible components of the fiber $\tilde F$.
Further, all irreducible components of $\tilde F$ are smooth rational curves except for possibly one component which could be a rational curve having exactly one node or exactly one cusp that comes from a type I$_1$ fiber or a type II fiber of $\pi_0$, respectively.
If we remove the singular component from $\tilde F$, then each connected component will be a tree of smooth rational curves.
This implies that for both cases $\mu(E)$ is a rational tree singularity.
Alternatively, if all irreducible components of $\tilde F$ are smooth, then they are rational curves, but $\tilde{F}$ can have a cycle that comes from type I$_b$ fiber ($b>1$) of $\pi_0$, a triple point coming from type IV fiber of $\pi_0$, or two components tangential at a point coming from type III fiber of $\pi_0$.
If $\tilde F$ has a triple point, then again since $\mu$ is very good, $E$ cannot contain all three components meeting at the point, and $E$ has to be a tree of smooth rational curves. Hence again $\mu(E)$ is a rational tree singularity.
A similar argument yields the same conclusion for the case of a type III fiber. 
Suppose finally that $\tilde F$ contains a cycle. If we remove any one irreducible component of this cycle from $\tilde F$, then the resulting curve is a tree of smooth rational curves. This means that if $E$ contains the entire cycle, then $\mu(E)$ is a unicyclic singularity, and otherwise $\mu(E)$ is a rational tree singularity.

For the last item (iv), since the fibers of $\pi$ are connected, the Leray spectral sequence for $\pi$ gives
\begin{align}
0 \lras H^1(C;\RR) \lras H^1(Y;\RR), 
\end{align}
so $b_1(Y) = 0$ implies that $b_1(C) =0$, and since $C$ is smooth, we have $C \simeq \PP^1$. 
\end{proof}
Motivated by this proposition, we make the following definition. 
\begin{definition} 
\label{d:nenu}
Let $Y$ be a normal compact complex surface with $a(Y) =1$. 
Then $n_e$ and $n_u$ will denote the number of elliptic tree singularities and unicyclic singularities on the surface $Y$, respectively.
\end{definition}
We will also need the following estimate for the second Betti number of a compact normal elliptic surface.
\begin{proposition}\label{p:b2}
Let $Y$ be a compact normal surface with $a(Y)=1$ and $\pi:Y\rightarrow\PP^1$ an elliptic fibration. 
Suppose that $b_1(\tilde Y) = 0$ for the minimal resolution $\tilde Y$ of the singularities of $Y$. Then 
\begin{align}\label{2ndbetti1}
b_2(Y)\ge 2p_g \big(\tilde Y\big) + 2 n_e + n_u + 2.
\end{align}
If moreover $\pi$ has at least one fiber with two or more irreducible components, then 
\begin{align}\label{2ndbetti2}
b_2(Y)\ge 2p_g \big(\tilde Y\big)  + 2 n_e + n_u +  3.
\end{align}
\end{proposition}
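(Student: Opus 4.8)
The plan is to compute $b_2(Y)$ by relating it to the second Betti number of the minimal resolution $\tilde Y$ and then contracting exceptional curves step by step, keeping careful track of how much the rank of $H_2$ drops at each contraction. First, I would let $\mu:\tilde Y\to Y$ be the minimal resolution, with exceptional curves $E_1,\dots,E_m$ over the singular points $p_1,\dots,p_m$, and use the good-pair argument already employed in the proof of Proposition~\ref{p:blk3}: since $(\tilde Y,E)$ with $E=\bigcup E_i$ is a good pair (Proposition~\ref{p:Lamotke}), the long exact sequence of the pair $(\tilde Y, E)$ together with the identification $H_*(\tilde Y/E)\simeq H_*(Y)$ gives
\begin{align*}
b_2(\tilde Y) - b_2(Y) = \dim\big(\mathrm{coker}\,(H_1(\tilde Y)\to H_1(\tilde Y, E))\big) + \dim\big(\mathrm{im}\,(H_2(E)\to H_2(\tilde Y))\big),
\end{align*}
and since the intersection form on each $E_i$ is negative definite, $H_2(E)\to H_2(\tilde Y)$ is injective with image of dimension equal to the total number of irreducible components of $E$. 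Combining this with $b_1(\tilde Y)=0$ and the contribution of $H_1(E)$, one gets a formula of the shape $b_2(Y) = b_2(\tilde Y) - \#\{\text{components of }E\} + \sum_i b_1(E_i)$; by Proposition~\ref{p:scri}, $b_1(E_i)$ is $0$, $1$, or $2$ according as $p_i$ is a rational tree, unicyclic, or elliptic tree singularity, so $\sum_i b_1(E_i) = 2n_e + n_u$.

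Next I would estimate $b_2(\tilde Y)$ from below using the elliptic fibration $\tilde\pi:\tilde Y\to\PP^1$ (the composition of $\mu$ with $\pi$, resolved). Passing to a relatively minimal model $\pi_0:Y_0\to\PP^1$ via a composition of blowdowns of $(-1)$-curves in fibers, one has $b_2(\tilde Y) = b_2(Y_0) + (\text{number of blown-up points})\ge b_2(Y_0)$. For a relatively minimal elliptic surface over $\PP^1$ with $b_1=0$, Kodaira's theory (the canonical bundle formula and the structure of $H^2$) gives $b_2(Y_0)\ge 2 p_g(Y_0) + 2$; here the "$+2$" comes from the classes of a fiber and a section/multisection (or more precisely from the part of $H^2$ not accounted for by $H^{2,0}\oplus H^{0,2}$ and the fiber components). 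Since $p_g$ is a birational invariant among smooth surfaces, $p_g(Y_0) = p_g(\tilde Y)$. Assembling: $b_2(Y) \ge b_2(Y_0) - \#\{\text{components of }E\} + 2n_e + n_u$ — but wait, I must be careful, because the $E_i$-components that get contracted by $\beta$ do \emph{not} appear among the fiber components counted in $b_2(Y_0)$; the cleanest route is to argue directly on $\tilde Y$: the subspace of $H^2(\tilde Y;\RR)$ spanned by a general fiber class plus $H^{2,0}\oplus H^{0,2}$ has dimension $2p_g(\tilde Y)+1$ and meets the span of the exceptional components of $\mu$ only in the fiber class's relation with components lying in a single fiber, and then add the contribution of reducible fibers and of $b_1(E_i)$. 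So the real content is a linear-independence count in $H^2(\tilde Y)$.

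The two bounds then follow by comparing ranks: inequality \eqref{2ndbetti1} is the bare count $2p_g(\tilde Y) + 2n_e + n_u + 2$, where the "$+2$" is the fiber class together with one more independent class (coming, e.g., from the fact that $\tilde\pi$ has at least one singular fiber — forced because $\chi_{\mathrm{top}}(\tilde Y)\neq 0$ would otherwise fail, or more robustly from $p_g\ge 0$ and the Euler characteristic formula $\chi_{\mathrm{top}}(\tilde Y)=\sum(\text{fiber defects})$). For \eqref{2ndbetti2}, if some fiber of $\pi$ has two or more irreducible components, that produces one more class in $H^2(Y)$ independent of all the others counted so far — here one uses that $\Sigma$ contains all such fiber components and that their classes, together with the fiber class, the $E_i$'s, and the $(2,0)+(0,2)$ part, remain linearly independent. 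I expect the main obstacle to be exactly this linear-independence bookkeeping: ensuring that the classes of reducible-fiber components, the exceptional curves $E_i$ (with their $b_1$ contributions), the fiber class, and the transcendental $H^{2,0}\oplus H^{0,2}$ part do not accidentally satisfy a relation that would weaken the bound — this requires invoking negative-definiteness of the $E_i$ intersection forms, the fact that distinct singular fibers contribute independently (Zariski/Shioda-type lemma on fiber components), and the Hodge-theoretic orthogonality of the transcendental part to all algebraic cycles. The rest is routine homological algebra with the long exact sequences and Kodaira's fiber classification, which is already invoked in Proposition~\ref{p:a1s}.
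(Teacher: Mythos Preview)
Your approach is essentially the same as the paper's: first derive the exact formula $b_2(Y)=b_2(\tilde Y)-b_2(E)+b_1(E)$ from the long exact sequence of the good pair $(\tilde Y,E)$, then use Proposition~\ref{p:scri} to get $b_1(E)=2n_e+n_u$, and finally bound $b_2(\tilde Y)-b_2(E)$ from below by a linear-independence count in $H^2(\tilde Y;\RR)$ using Zariski's lemma on fiber components together with the Hodge decomposition. The paper argues directly on $\tilde Y$ (not on a relatively minimal model), proving that the fiber class, a K\"ahler class, and all but one component of each reducible fiber of $\tilde\pi$ are linearly independent in $H^{1,1}(\tilde Y,\RR)$; adding $2p_g(\tilde Y)$ from $H^{2,0}\oplus H^{0,2}$ gives $b_2(\tilde Y)\ge \sum_i(r_i-1)+2+2p_g(\tilde Y)$, and since every reducible fiber of $\tilde\pi$ has at least one component surviving in $Y$, one has $b_2(E)\le\sum_i(r_i-1)$ with equality forcing all fibers of $\pi$ to be irreducible.

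One point in your sketch needs correction: your candidates for the second class making up the ``$+2$'' --- a section/multisection, or a class coming from the existence of a singular fiber --- do not work here. Since $a(Y)=1$, the surface $\tilde Y$ is not projective and need not carry any horizontal curve; and classes supported on singular fibers are exactly the ones you are already counting and subtracting via $b_2(E)$. The correct second class, and the one the paper uses, is a K\"ahler class: it exists because $b_1(\tilde Y)=0$ forces $\tilde Y$ to be K\"ahler, it lies in $H^{1,1}$, and its pairing with the fiber class is positive, which immediately separates it from the span of all fiber components (each of which pairs to zero with the fiber class). Once you make that substitution, your outline matches the paper's proof.
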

\begin{proof}
In the same way to the proof of Proposition \ref{p:blk3}, using the notations there, we have isomorphism $H_q(\ol Y)\simeq H_q(Y)$ for any $q>1$.
Also, we have the long exact sequence
\begin{multline}\label{les1}
\cdots\lras H_2(E) \lras 
H_2(\tilde Y)\lras H_2(\ol Y)
\lras 
H_1(E) \lras 
H_1(\tilde Y)\lras \cdots,
\end{multline}
where the homomorphism $H_2(E) \rightarrow H_2(\tilde Y)$ is injective, and $H_1(\tilde Y) =0$ from the assumption.  Therefore, 
\begin{align}\label{betti1}
b_2(Y)=  b_2(\tilde Y) - b_2(E) + b_1(E).
\end{align}

For the first Betti number of each connected component $E_i$ of $E$,  
from Proposition \ref{p:scri},
\begin{itemize}
\item $b_1(E_i) = 2$ if and only if $\mu(E_i)$ is an elliptic tree singularity,
\item $b_1(E_i) = 1$ if and only if $\mu(E_i)$ is a unicyclic singularity,
\item $b_1(E_i) = 0$ otherwise; namely if and only if $\mu(E_i)$ is a rational tree singularity.
\end{itemize}
From this, $\dim H_1(E) = 2n_e + n_u.$ 
Hence, from \eqref{betti1}, 
\begin{align}\label{betti2}
b_2(Y)=  b_2(\tilde Y) - b_2(E) + 2n_e + n_u.
\end{align}

We show that, in general, for a smooth K\"ahler elliptic surface $\pi:X\rightarrow C$ over a compact smooth curve $C$, if $\mf f_1,\dots, \mf f_k$ are all reducible fibers of $\pi$, then letting $\mf f'_1,\dots,\mf f'_k$ be curves obtained from $\mf f_1,\dots,\mf f_k$ by removing any one of the irreducible components respectively so that one irreducible component of $\mf f_i$ is not included in $\mf f'_i$, then
the cohomology classes represented by 
\begin{itemize}
\item
a fiber of $\pi$ and any one of K\"ahler forms on $X$
\item 
all the irreducible components of $\mf f'_1,\dots, \mf f'_k$
\end{itemize}
are linearly independent in $H^2(X)$.
Writing $\ccc_{ij}$ for the cohomology classes of the irreducible components of $\mf f'_i$ ($1\le i\le k$), $\omega$ for a K\"ahler class on $X$, and $F$ for the fiber class of $\pi$, we assume the relation
$$
a\omega + b F + \sum_{i=1}^k \sum_j c_{ij} \ccc_{ij} = 0
$$
for some $a,b,c_{ij}\in\RR$.
Taking the intersection number of this relation with $F$, we obtain $a=0$. From this, taking the self-intersection number of this relation, we obtain 
\begin{align}\label{Z}
\sum_{i=1}^k \Big(\sum_j c_{ij} \ccc_{ij}\Big)^2 = 0.
\end{align}
Now we recall Zariski's lemma; see \cite[Lemma~III.8.2]{BHPV}. This says that if $\mf f = \sum_{j=1}^r m_j C_j$ is any reducible fiber of a (proper) fibration on a smooth surface 
over a smooth curve, then a divisor $D$ of the form $\sum_{j=1}^r  n_j C_j$ always satisfies $D^2\le 0$ and the equality holds only when $D = q \mf f$ for some $q\in \QQ$.
Since we are removing an irreducible component from $\mf f_i$ for any $i$, 
from \eqref{Z}, this means $c_{ij}=0$ for any $j$ and $i$.
Thus, we obtain the required linearly independency.
Furthermore, all these classes belong to the subspace $H^{1,1}(X,\RR)$.

We apply this to the present elliptic surface $\tilde \pi:\tilde Y\rightarrow \PP^1$.
Let $\mf f_1,\dots, \mf f_k$ be all reducible fibers of $\tilde\pi$ and $r_1,\dots, r_k>1$ be the number of their irreducible components respectively.
Then from the Hodge decomposition for $H^2(\tilde Y,\RR)$ and
the above linearly independency, we obtain an inequality
\begin{align}\label{betti5}
b_2(\tilde Y) \ge \sum_{i=1}^k(r_i - 1) + 2 + 2p_g(\tilde Y),
\end{align}
where $p_g(\tilde Y) = \dim H^0(\tilde Y,K_{\tilde Y})$.
Since any component of the exceptional curve $E$ of $\mu:\tilde Y\rightarrow Y$ is in some reducible fiber from Proposition \ref{p:a1s} and since any $\mf f_i$ has at least one component that is not in $E$, we have an inequality
\begin{align}\label{betti6}
b_2(E) \le \sum_{i=1}^k(r_i - 1),
\end{align}
and the equality holds if and only if all fibers of the original elliptic fibration $\pi:Y\rightarrow \PP^1$ are irreducible.
From \eqref{betti5}, this means an inequality
\begin{align}\label{betti3}
b_2(\tilde Y) \ge b_2(E) + 2 + 2p_g(\tilde Y),
\end{align}
and the equality holds only if all fibers of $\pi$ are irreducible.
From \eqref{betti2}, this gives an inequality
\begin{align}\label{betti4}
b_2(Y) \ge 2 + 2p_g(\tilde Y) + 2n_e + n_u,
\end{align}
and the equality holds only if all fibers of $\pi:Y\rightarrow C$ are irreducible.
This gives the desired estimates \eqref{2ndbetti1} and \eqref{2ndbetti2}.
\end{proof}

We next prepare some topological results on a neighborhood of a singular fiber of a proper holomorphic fibration over a disc.  Let $\DDD\subset\CC$ be a unit disk, and for any positive real number $\eee<1$, let $\DDD_{\eee}^*:=\{z\in\DDD\set 0<|z|<\eee\}$
and $\ol\DDD^*_{\eee}:=\{z\in\CC\set 0<|z|\le \eee\}$.
The next proposition is well-known in the case that $N$ is smooth.
\begin{proposition}\label{p:Clemens}
Let $N$ be a normal complex surface and $\pi:N\rightarrow \DDD$ a proper surjective holomorphic mapping. Suppose that $N$ is non-singular away from $F_0:=\pi\inv(0)$ and that $f$ is of maximal rank on $N\minus F_0$. Then:
\begin{itemize}
\item[(i)] there exists a strong deformation retraction $\Phi:[0,1]\times N\rightarrow N$ of $N$ onto the fiber $F_0$ that is a lift of the standard deformation retraction from $\DDD$ onto $\{0\}$,
\item[(ii)] 
if $F_0 = \sum_j m_j F_{0,j}$ is the decomposition of the fiber $F_0$ into irreducible components, then 
the isomorphism $H_2(N)\simeq H_2(F_0)$ induced from the deformation retraction in (i) maps the fiber class of $\pi$ to 
the class $\sum_j m_j[F_{0,j}]$, where $[F_{0,j}]$ denotes the fundamental class of the component $F_{0,j}$,
\item[(iii)]
for any $0<\eee<1$,
there exists a strong deformation retraction of $\pi\inv(\ol\DDD_{\eee}^*)$ onto $\pi\inv (\ptl \DDD_{\eee})$.
\end{itemize}
\end{proposition}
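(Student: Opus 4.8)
The plan is to reduce everything to the case in which the total space is smooth, which is recalled as known above. Since $N$ is normal its singularities are isolated, and by hypothesis they all lie on $F_0$; call them $p_1,\dots,p_m$. First I would choose a resolution of singularities $\sigma:\tilde N\to N$ restricting to a biholomorphism over $N\minus\{p_1,\dots,p_m\}$, and set $\tilde\pi:=\pi\circ\sigma:\tilde N\lras\DDD$, together with $\tilde F_0:=\tilde\pi\inv(0)=\sigma\inv(F_0)$ and the exceptional set $\mathcal E:=\sigma\inv(\{p_1,\dots,p_m\})\subset\tilde F_0$. Then $\tilde N$ is a complex manifold and $\tilde\pi$ is proper, surjective, and of maximal rank away from $\tilde F_0$ (there $\sigma$ is a local biholomorphism onto $N\minus F_0$, on which $\pi$ has maximal rank). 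Hence the known smooth case applies to $\tilde\pi$: there is a strong deformation retraction $\tilde\Phi:[0,1]\times\tilde N\lras\tilde N$ of $\tilde N$ onto $\tilde F_0$ lifting the standard retraction $H(u,z)=(1-u)z$ of $\DDD$ onto $\{0\}$, and the induced isomorphism sends the fiber class $[\tilde\pi\inv(t)]$ to $\sum_j m_j[\tilde F_{0,j}]+\sum_{i,k}a_{i,k}[E_{i,k}]$, where $\tilde F_{0,j}$ is the strict transform of $F_{0,j}$, the $E_{i,k}$ are the components of $\mathcal E$, and the $a_{i,k}$ are their multiplicities in $\tilde\pi^{*}(0)=\sigma^{*}\pi^{*}(0)$.

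For (i), the key point is that $\tilde\Phi$ descends along $\sigma$. Since $\tilde\Phi$ is a \emph{strong} deformation retraction onto $\tilde F_0$, it fixes $\tilde F_0$, hence also $\mathcal E$, pointwise for every $u\in[0,1]$. Therefore $\sigma\circ\tilde\Phi(u,\cdot):\tilde N\lras N$ is constant on every fiber of $\sigma$: over a point of $N\minus\{p_1,\dots,p_m\}$ the fiber is a single point, while over $p_i$ the fiber is $\sigma\inv(p_i)\subset\mathcal E$, on which $\tilde\Phi(u,\cdot)$ is the identity, so $\sigma\circ\tilde\Phi(u,\cdot)$ is constantly $p_i$ there. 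Since $\sigma$ (and likewise $\id_{[0,1]}\times\sigma$) is a proper continuous surjection onto a locally compact Hausdorff space, it is a quotient map, so $\sigma\circ\tilde\Phi$ factors through a continuous map $\Phi:[0,1]\times N\lras N$ with $\Phi(u,\sigma(x))=\sigma(\tilde\Phi(u,x))$. One then checks directly that $\Phi_0=\id_N$, that each $\Phi_u$ fixes $F_0=\sigma(\tilde F_0)$ pointwise, that $\Phi_1(N)\subset F_0$, and that $\pi\circ\Phi=H\circ(\id\times\pi)$; this gives the strong deformation retraction in (i), and in particular the isomorphism $(\Phi_1)_*:H_2(N)\xrightarrow{\;\sim\;}H_2(F_0)$, inverse to the inclusion-induced map.

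For (ii), I would apply $\sigma_*:H_2(\tilde F_0)\lras H_2(F_0)$ to the smooth-case identity. Each $E_{i,k}$ is collapsed to the point $p_i$, so $\sigma_*[E_{i,k}]=0$, whereas $\sigma$ restricts to a degree-one proper morphism $\tilde F_{0,j}\lras F_{0,j}$ of compact curves, so $\sigma_*[\tilde F_{0,j}]=[F_{0,j}]$; thus $\sigma_*\big(\sum_j m_j[\tilde F_{0,j}]+\sum_{i,k}a_{i,k}[E_{i,k}]\big)=\sum_j m_j[F_{0,j}]$. On the other hand $\sigma$ is a biholomorphism over $\DDD\minus\{0\}$, so $\sigma_*[\tilde\pi\inv(t)]=[\pi\inv(t)]$, and $\sigma\circ\tilde\Phi_1=\Phi_1\circ\sigma$ gives $(\Phi_1)_*[\pi\inv(t)]=\sigma_*(\tilde\Phi_1)_*[\tilde\pi\inv(t)]$. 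Combining these yields $(\Phi_1)_*[\pi\inv(t)]=\sum_j m_j[F_{0,j}]$, which is (ii). For (iii) no resolution is needed: over $\ol\DDD^{*}_{\eee}$ the surface $N$ is already nonsingular and $\pi$ has maximal rank, so $\pi:\pi\inv(\ol\DDD^{*}_{\eee})\lras\ol\DDD^{*}_{\eee}$ is a proper submersion, hence a locally trivial fiber bundle by Ehresmann's theorem; the half-open annulus $\ol\DDD^{*}_{\eee}$ strong-deformation-retracts onto $\ptl\DDD_{\eee}$ by the radial push to the circle $z\mapsto\eee\,z/|z|$, and lifting this retraction through an Ehresmann connection produces a strong deformation retraction of $\pi\inv(\ol\DDD^{*}_{\eee})$ onto $\pi\inv(\ptl\DDD_{\eee})$ (the horizontal lift of the zero field along $\ptl\DDD_{\eee}$ is zero, so the boundary is fixed).

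The parts that need genuine care are all in the first two paragraphs: verifying that the known smooth case really applies to $\tilde\pi$, that $\tilde\pi^{*}(0)$ has the stated form, and, above all, that the smooth-case retraction descends through $\sigma$ — and this last point is exactly where one uses that $\Sing N$ is a finite set contained in $F_0$ and that $\tilde\Phi$ restricts to the identity on $\tilde F_0\supset\mathcal E$. Once that is in place, (i)--(iii) follow from standard facts (proper maps are quotient maps, functoriality of $\sigma_*$, Ehresmann's theorem and homotopy lifting), so I expect the main obstacle to be assembling the reduction correctly rather than any deep new argument.
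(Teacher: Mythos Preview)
Your proposal is correct and follows essentially the same approach as the paper: resolve the singularities of $N$, invoke the smooth case to obtain $\tilde\Phi$ on $\tilde N$, and then descend along the resolution using that $\tilde\Phi$ fixes the exceptional locus pointwise; part (ii) is deduced by pushing forward via $\sigma_*$, and (iii) from Ehresmann. The only cosmetic difference is that the paper, rather than citing ``the known smooth case'' as a black box, further blows up so that $\tilde F_0$ becomes a simple normal crossings divisor and then cites Clemens' explicit semigroup action directly; your descent argument (proper surjection $\Rightarrow$ quotient map, $\tilde\Phi|_{\mathcal E}=\id$) is in fact more detailed than what the paper writes, which simply refers back to the analogous step in an earlier proposition.
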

\begin{proof}
For the first assertion, by first taking the minimal resolution of all singularities of $Y$ (which belong to $F_0$ by assumption) and next applying blowups to eliminate non-transversal intersections in $F_0$, we obtain a smooth surface $\tilde N$ equipped with a birational holomorphic mapping $\mu:\tilde N\rightarrow N$, such that the composition $\tilde\pi:=\pi\circ\mu:\tilde N\rightarrow \DDD$ satisfies the following conditions:
\begin{itemize}
\item any irreducible component of
the fiber $\tilde F_0:=\tilde \pi\inv(0)$ is non-singular,
\item any intersection of two distinct components of $\tilde F_0$ is transverse,
\item $\tilde F_0$ has no triple point.
\end{itemize}
Of course, the fiber $\tilde F_0$ could have a non-reduced component in general. 
Then by using \cite[(5.1) and Theorem 6.9]{Cl77}
(see also \cite{Lamotke1975}, \cite[III.14]{BHPV} and \cite{Lefschetz}), 
there exists a strong deformation retraction $\tilde\Phi:[0,1]\times\tilde N\rightarrow \tilde N$ of $\tilde N$ onto the fiber $\tilde F_0$, which is a lift of the standard retraction from $\DDD$ onto a point $\{0\}$ defined by $(t,z)\mapsto tz$, where $t\in[0,1]$ and $z\in\DDD$; in particular, $\tilde\Phi$ preserves the fibration $\tilde\pi$ in the sense that for any $t\in[0,1]\times\RR$, $\tilde\Phi(t,\cdot)$ maps a fiber of $\tilde\pi$ to a fiber of $\tilde\pi$.
In fact, in \cite{Cl77}, an action of the semigroup $[0,1]\times \RR$ on $\tilde N$ that preserves the fibration $\tilde\pi$ is constructed, and the present retraction is obtained by just restricting it to $[0,1]\times\{0\}\subset [0,1]\times\RR$. Similar to the proof of Proposition~\ref{p:Lamotke}, we can see that $\tilde\Phi$ descends to a strong deformation retraction $\Phi$ of $N$ onto $F_0$; the elementary details are omitted. 

If $N$ is smooth, the second property in the proposition is also a consequence of the action of the semigroup $[0,1]\times \RR$ on $\tilde N$ concretely given in the proof of \cite[Theorem~6.9]{Cl77} using local coordinates around the central fiber of $\pi$, especially from \cite[(6.8) in p.~244]{Cl77} where the multiplicities of the components of the central fiber are included.
If $N$ is singular, we consider the following diagram
\[
  \begin{CD}
H_2\big(\tilde N\big) @>>> H_2 \big(\tilde F_0\big) \\
@V{\mu_*}VV @VV{\mu_*}V\\
H_2(N)  @>>> H_2(F_0) 
  \end{CD}
\]
where the horizontal arrows are the isomorphisms induced by the deformation retractions from $\tilde N$ onto $\tilde F_0$ and from $N$ onto $F_0$ respectively.
This diagram is commutative as $\Phi = \mu\circ\tilde\Phi$.
The assertion (ii) for singular $Y$ readily follows from this diagram, using the validity for smooth $Y$.

Finally, since there are no critical values of $\pi$ in $\ol\DDD_{\eee}^*$, the third property is elementary.
\end{proof}
We will also need the following technical lemma. 
\begin{lemma}\label{l:b2rel} Let $\tilde{N}$ be a smooth complex surface and $\tilde\pi: \tilde{N} \rightarrow \DDD$ a proper elliptic fibration. Let $\tilde{F}=\tilde\pi\inv(0)$ and that $f$ is of maximal rank on $\tilde N \minus \tilde F $. 
Let $\bbb:H_2(\tilde N \minus \tilde F) \rightarrow H_2(\tilde N)$ be the natural homomorphism induced by the inclusion $\tilde N\minus\tilde F\subset \tilde N$.
Then the image of $\bbb$ is 1-dimensional and it is generated by the image of the fiber class of the elliptic fibration. 
\end{lemma}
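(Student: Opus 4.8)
The plan is to analyze the long exact sequence of the pair $(\tilde N, \tilde N \minus \tilde F)$ in homology, together with the topological description of a neighborhood of the singular fiber provided by Proposition~\ref{p:Clemens}. First I would note that by Proposition~\ref{p:Clemens}(i) applied to a small disc $\DDD_\eee \subset \DDD$ (rescaling so we may take $\DDD_\eee = \DDD$), the surface $\tilde N$ strongly deformation retracts onto $\tilde F$, so $H_2(\tilde N) \simeq H_2(\tilde F)$, and under this isomorphism the fiber class of $\tilde\pi$ goes to $\sum_j m_j [\tilde F_{0,j}]$ by Proposition~\ref{p:Clemens}(ii). Next, by Proposition~\ref{p:Clemens}(iii), for $0 < \eee < 1$ the set $\tilde\pi\inv(\ol\DDD_\eee^*)$ strongly deformation retracts onto the single smooth fiber $\tilde\pi\inv(\ptl\DDD_\eee)$, which is a $2$-torus; hence $\tilde N \minus \tilde F$ is homotopy equivalent to $T^2$, so $H_2(\tilde N \minus \tilde F;\RR) \simeq \RR$, generated by the fundamental class of a generic fiber.

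The key step is then to identify the map $\bbb$ under these identifications: it is induced by the inclusion of a generic smooth fiber into $\tilde N$, hence $\bbb$ sends the generator (the class of a generic fiber $[T^2]$) to the image in $H_2(\tilde N)$ of the homology class of that generic fiber. Since all fibers of a fibration over a disc are homologous (the disc is contractible, so any two fibers cobound in $\tilde N$), the image of $\bbb$ is exactly the one-dimensional subspace spanned by the fiber class of $\tilde\pi$. This also shows the image is nonzero, because the fiber class is nonzero in $H_2(\tilde N)$: under the isomorphism $H_2(\tilde N)\simeq H_2(\tilde F)$ it corresponds to $\sum_j m_j[\tilde F_{0,j}]$, and the components $[\tilde F_{0,j}]$ are linearly independent in $H_2(\tilde F)$ (their dual graph together with the curves themselves generates $H_2(\tilde F)\simeq \RR^{(\text{number of components})} \oplus (\text{cycle part})$), so this sum is a nonzero class. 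Therefore $\Image(\bbb)$ is exactly $1$-dimensional and generated by the fiber class.

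The main obstacle I anticipate is being careful about two points. First, one must justify passing to a small disc: Proposition~\ref{p:Clemens} as stated is for the unit disc, but the hypotheses are invariant under shrinking $\DDD$ (there are no critical values of $\tilde\pi$ other than $0$), so applying it on $\DDD_\eee$ and using that $\tilde\pi\inv(\DDD_\eee)$ is a deformation retract of $\tilde N$ is routine but should be mentioned. Second, one must make sure the inclusion-induced map $H_2(\tilde N \minus \tilde F) \to H_2(\tilde N)$ really is computed by ``generic fiber maps to fiber class''; this is where the homotopy equivalence $\tilde N \minus \tilde F \simeq T^2$ from Proposition~\ref{p:Clemens}(iii) is used, observing that the retraction there is compatible with the inclusion into $\tilde N$ up to homotopy (the retraction of $\tilde\pi\inv(\ol\DDD_\eee^*)$ onto $\tilde\pi\inv(\ptl\DDD_\eee)$ lands inside $\tilde N$, and composing with the inclusion $\tilde\pi\inv(\ptl\DDD_\eee)\hookrightarrow \tilde N$ is homotopic to the original inclusion). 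Everything else is a short diagram chase in the long exact sequence of the pair, which I would only sketch.
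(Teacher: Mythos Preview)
Your argument contains a genuine error in the identification of $\tilde N\minus\tilde F$. The set $\ptl\DDD_\eee$ is the boundary \emph{circle} $\{|z|=\eee\}$, not a point; consequently $\tilde\pi\inv(\ptl\DDD_\eee)$ is a $T^2$-bundle over $S^1$ (the mapping torus of the monodromy around $0$), not a single torus. Thus $\tilde N\minus\tilde F$ is homotopy equivalent to this $3$-manifold $\ptl\tilde N$, and $H_2(\tilde N\minus\tilde F;\RR)$ is \emph{not} in general one-dimensional: by Poincar\'e duality $b_2(\ptl\tilde N)=b_1(\ptl\tilde N)$, and this equals $3$, $2$, or $1$ according as the central fiber is of type $mI_0$, $mI_b$ with $b\ge 1$, or any of the remaining Kodaira types. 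So your computation of the source of $\bbb$ is wrong, and knowing only that the fiber class lies in the image does not bound the image from above.

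The paper's proof addresses exactly this point. It uses the long exact sequence of the pair together with Poincar\'e--Alexander--Lefschetz duality, identifying $H_3(\tilde N,\tilde N\minus\tilde F)\simeq H^1(\tilde F)$, and reduces the statement that $\dim\Image(\bbb)=1$ to the numerical identity $b_2(\ptl\tilde N)=b_1(\tilde F)+1$. This identity is then verified by passing to the relatively minimal model and checking each Kodaira fiber type against the monodromy matrix via the mapping-torus exact sequence. Your observation that the fiber class is nonzero in $H_2(\tilde N)\simeq H_2(\tilde F)$ is correct and is the same as the paper's argument for the lower bound $\dim\Image(\bbb)\ge 1$; what is missing is the upper bound, which genuinely requires the case analysis (or an equivalent argument controlling the kernel of $\bbb$).
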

\begin{proof}
We consider the long exact sequence of homology groups with real coefficients associated with the pair $(\tilde N,\tilde N\minus \tilde F)$, which is
\begin{align}\label{les:NF1}
\cdots\lras
H_3\big(\tilde N\big) \lras H_3\big(\tilde N,\tilde N\minus \tilde F\big)
\lras
H_2 \big(\tilde N\minus \tilde F\big) \stackrel{\bbb}\lras H_2\big(\tilde N\big) \lras \cdots.
\end{align}
We have 
\begin{align}
H_3(\tilde N) \simeq H_3(\tilde F)=0,
\end{align}
from Proposition \ref{p:Clemens} (i) and $\dim_{\RR} \tilde F=2$. 
By Poincar\'e-Alexander-Lefschetz duality (see \cite[Theorem~VI.8.3]{Bredon}) and excision, we have 
\begin{align}
\label{e:PAD}
H_3(\tilde N, \tilde N\minus\tilde F)\simeq \check{H}^1(\tilde{F}) \simeq H^1(\tilde{F}),
\end{align}
where $\check{H}^1(\tilde{F})$ is the direct limit of $H^1(U)$ as $U$ ranges over all open subsets of $\tilde{F}$ in $\tilde{N}$, which is equal to the ordinary cohomology of $\tilde{F}$ 
from Proposition \ref{p:Clemens} (i). We also have
\begin{align}
H_2(\tilde N \minus \tilde F) \simeq H_2(\ptl \tilde N),
\end{align}
from Proposition~\ref{p:Clemens}~(iii).
Therefore, from \eqref{les:NF1}, to show that $\bbb$ has a 1-dimensional image, it suffices to show
\begin{align}\label{b2rel1}
b_2(\ptl \tilde N) = b_1(\tilde F) + 1.
\end{align}
Let $\tilde N_0\rightarrow\DDD$ be the relatively minimal model of the elliptic fibration $\tilde N\rightarrow \DDD$, which is given as the composition of blowing downs of $(-1)$-curves in $\tilde F$.
This readily means that $b_1(\tilde F) = b_1(\tilde F_0)$, where $\tilde F_0$ is the fiber of $\tilde N_0\rightarrow\DDD$ over $0$. 
Since $ \ptl \tilde N\simeq\ptl \tilde N_0$
and also $b_2(\ptl \tilde N_0) = b_1(\ptl \tilde N_0)$ as $\ptl\tilde N_0$ is compact oriented and 3-dimensional, these mean that \eqref{b2rel1} is equivalent to 
\begin{align}\label{b2rel12}
b_1\big(\ptl\tilde N_0\big) = b_1\big(\tilde F_0\big) + 1.
\end{align}
Identifying $\ptl \tilde N_0$ with the mapping torus of the monodromy transformation $T^2\rightarrow T^2$ around the origin given by some matrix $A\in \SL(2,\ZZ)$, 
the Betti number $b_1(\ptl \tilde N_0)$ can be calculated from the long exact sequence associated with the mapping torus \cite[Chapter~2, Example~2.48]{Hatcher}, which is 
$$
H_1(T^2) \stackrel{A - E}\lras H_1(T^2) \lras H_1(\ptl \tilde N_0) \lras H_0(T^2)\lras 0.
$$
From this, using the concrete form of the matrix $A$ listed for example in \cite[p.~210, Table~6]{BHPV}, we obtain that $b_1(\ptl \tilde N_0)$ is explicitly given as shown in the following table.
\begin{center}
\begin{tabular}{c|c|c|c} 
   monodromy & type of the singular fiber & $b_1\big(\ptl \tilde N_0\big)$ & $b_1\big(\tilde F_0\big)$ \\ 
   \hline\hline
   $\{\id\}$ & $mI_0$ $(m>0)$ & 3 & 2\\
   \hline
   infinite 
   & $mI_b$ ($b\ge 1, m>0$) & 2 & 1 \\
   \hline
   infinite & $I_b^*$  ($b\ge 0$)& 1 & 0 \\ 
   \hline
   finite & $II, III, IV, II^*, III^*, IV^*$ & 1 & 0 
\end{tabular}
\end{center}
From this, we find that \eqref{b2rel12} holds for any type of the singular fiber.

To show the latter assertion, since the fiber class of the elliptic fibration $\tilde\pi:\tilde N\rightarrow \DDD$ belongs to the image of the fiber class of the fibration $\tilde N \minus \tilde F\rightarrow \DDD \minus\{0\}$ under $\bbb$, it suffices to see that the former class is non-zero.
Writing $\mf f$ for this class, from Proposition \ref{p:Clemens} (ii), 
by the natural homomorphism $H_2(\tilde N)\rightarrow H_2(\tilde F)$ induced by the deformation retraction of $\tilde N$ onto $\tilde F$, the class $\mf f$ is mapped to the class $\sum_{j} m_j [\tilde F_j]$, where $\mf f = \sum_j m_j \tilde F_j$ is the decomposition into the irreducible components.
Clearly this class is non-zero. Therefore, $\mf f$ is also non-zero. 
\end{proof}
\section{The case $a(Y)=1$}\label{s:a1case}
In this section, we show that in the setting of Theorem \ref{thm:main1}, $a(Y)$ cannot be one if the mapping $f:Z\rightarrow Y$ is surjective. Combined with Proposition~\ref{thm:main2} and Proposition~\ref{p:aneq0}, this will complete a proof of Theorem \ref{thm:main1}.
As in Subsection~\ref{ss:Pre}, we always assume that the surface $Y$ is normal and the mapping $f$ has connected fibers. From Section~\ref{s:elliptic}, we have the following. 
\begin{proposition}\label{p:ell}
Let $f:Z\rightarrow Y$ be as in Theorem \ref{thm:main1} with connected fibers and $Y$ being normal, and assume $a(Y)=1$ and that $f$ is surjective.
Then $Y$ admits a surjective holomorphic mapping $\pi:Y\rightarrow \PP^1$ whose generic fiber is an elliptic curve. 
\end{proposition}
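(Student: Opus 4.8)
The plan is to combine Proposition~\ref{p:a1s} with the topological constraints from Section~\ref{s:constraints}, in particular Proposition~\ref{prop:b1v}~(ii). First, since $f : Z \to Y$ is surjective, we have $a(Z) \ge a(Y) = 1$, but that is not even needed here: the hypothesis $a(Y) = 1$ is given, so Proposition~\ref{p:a1s}~(i) applies directly and produces a surjective holomorphic mapping $\pi : Y \to C$ onto a smooth compact curve $C$ whose generic fiber is a smooth elliptic curve. It remains only to identify $C$ with $\PP^1$, and this is exactly Proposition~\ref{p:a1s}~(iv), which says that $b_1(Y) = 0$ forces $C \simeq \PP^1$.

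So the real content of the argument is verifying $b_1(Y) = 0$, and this is furnished by Proposition~\ref{prop:b1v}~(ii): under the standing reduction, $f : Z \to Y$ has connected fibers and $Z$ is a compact complex threefold with $b_1(Z) = 0$ (which holds since $Z$ is as in Theorem~\ref{thm:main1}), and $Y$ is reduced, irreducible and normal. Hence that proposition gives $b_1(Y) = 0$ immediately from the five-term exact sequence \eqref{Leray5} for $f$, since $H^1(Z,\RR) = 0$ injects $H^1(Y,\RR)$ into it. Feeding $b_1(Y) = 0$ into Proposition~\ref{p:a1s}~(iv) yields $C \simeq \PP^1$, and composing, $\pi : Y \to \PP^1$ is the desired elliptic fibration.

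In short, I would write: by the reduction in Subsection~\ref{ss:Pre}, $f$ has connected fibers and $Y$ is reduced, irreducible and normal; since $b_1(Z) = 0$, Proposition~\ref{prop:b1v}~(ii) gives $b_1(Y) = 0$; since $a(Y) = 1$, Proposition~\ref{p:a1s}~(i) gives a surjective holomorphic $\pi : Y \to C$ onto a smooth curve with elliptic generic fiber; and Proposition~\ref{p:a1s}~(iv), applied using $b_1(Y) = 0$, identifies $C$ with $\PP^1$. This is essentially a bookkeeping proof — there is no genuine obstacle, since all the hard work has already been done in Propositions~\ref{prop:b1v} and~\ref{p:a1s}. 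The only point worth a sentence of care is making explicit that the hypotheses of Proposition~\ref{prop:b1v}~(ii) are met, i.e.\ that the surjectivity and connected-fiber assumptions on $f$ together with $b_1(Z) = 0$ are in force under the present reduction.
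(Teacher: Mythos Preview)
Your proof is correct and matches the paper's approach exactly: the paper simply states this proposition as a consequence of Section~\ref{s:elliptic} (i.e., Proposition~\ref{p:a1s}) without a separate proof, since the elliptic fibration $\pi:Y\to C$ comes from Proposition~\ref{p:a1s}~(i), $b_1(Y)=0$ from Proposition~\ref{prop:b1v}~(ii), and $C\simeq\PP^1$ from Proposition~\ref{p:a1s}~(iv). Your write-up is just a more explicit version of this bookkeeping.
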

In the sequel, we always reserve $\pi:Y\rightarrow \PP^1$ for this elliptic fibration.
Further, noting that the discriminant locus of $f:Z\rightarrow Y$ as in \eqref{fb1} is necessarily contained in a finite number of fibers of $\pi$ from Proposition~\ref{p:a1s}, we make the following definition. 
\begin{definition}
\label{d:rsdef}
The subset $\Sigma\subset Y$ is the union of the following:
\begin{itemize}
\item
the entire fiber of $\pi$ that includes any component of the discriminant locus of $f:Z\rightarrow Y$,
\item
the entire fiber of $\pi$ that contains any singularity of $Y$.
\end{itemize}
The subset $D \subset Z$ is $D:=f\inv(\Sigma)$. Let $s$ denote the number of connected components of $\Sigma$ and let $r$ denote the number of irreducible components of $D$ of dimension $2$. 
\end{definition}
This means that $\Sigma$ can include an irreducible component of a fiber of $\pi$ that is not in the discriminant locus of $f$.
Since $\Sigma$ consists of entire fibers, from the choice of $s$, we can write 
\begin{align}\label{Sigma2}
\Sigma = \pi\inv(\{u_1,\dots, u_s\})
\end{align}
for some distinct points $u_1,\dots, u_s\in\PP^1$. We let 
\begin{align}\label{Fi}
F_i:= \pi\inv(u_i), \quad 1\le i\le s.
\end{align}
Then $\Sigma = F_1\sqcup\dots\sqcup F_s$.
Each $F_i$ may or may not be a singular fiber of $\pi$.
Our next purpose is again to estimate the dimension of the relative cohomology group $H^2(Y,Y\minus\Sigma;\RR)$.
From now on, we will work on homology groups rather than cohomology groups. Since we only need cohomology groups with real coefficients, this does not affect our purpose.

Applying Proposition~\ref{p:Clemens} to the fibration $\pi:Y\rightarrow\PP^1$, for each $1\le i\le s$, we can choose a neighborhood $\DDD_i$ in $\PP^1$ of the point $u_i$ such that $\DDD_i\cap \DDD_j=\emptyset$ if $i\neq j$ and the inverse image 
$N_i:=\pi\inv(\DDD_i)$ admits a strong deformation retraction onto the fiber $F_i=\pi\inv(u_i)$ that is a lift of the standard retraction of $\DDD_i$ onto $u_i$.
Further, if $\mu:\tilde Y\lras Y$ is a very good resolution of all singularities of $Y$, then putting $\tilde\pi = \pi\circ\mu$,
$\tilde N_i:=\tilde\pi\inv(\DDD_i)$ admits a strong deformation retraction onto the fiber $\tilde F_i:=\pi\inv(u_i)$ that is a lift of the standard retraction of $\DDD_i$ onto $u_i$.
Moreover, we put $\tilde{\Sigma} := \tilde F_1 \cup \cdots \cup \tilde F_s$.
In the rest of this section, we use these notations.
Note that $\tilde F_i = F_i$ can hold and also $F_i$ can be a smooth fiber.

The following proposition is a key ingredient to our proof. 

\begin{proposition}\label{p:rs}
Let $\Sigma, D, r,$ and $s$ be as in Definition~\ref{d:rsdef}. 
Then
\begin{align}\label{sler}
s\le r,
\end{align}
with equality only if all the entire fibers $F_1,\dots, F_s$ in $\Sigma$ are irreducible. Furthermore, 
\begin{align}
\label{rs1}
b_1(Y\minus\Sigma) &= s-1, \\ 
\label{rs2}
b_1 (Z\minus D) &= r.
\end{align}
\end{proposition}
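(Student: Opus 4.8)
The plan is to establish the three claims in order, using the deformation-retraction structure from Proposition~\ref{p:Clemens} and Lemma~\ref{l:b2rel}, together with the Mayer--Vietoris sequence for the cover of $Y\minus\Sigma$ (resp.\ $Z\minus D$) by suitable open pieces.

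First I would prove the inequality \eqref{sler}. For each $i$, the fiber $F_i$ of $\pi$ contributes to $D=f\inv(\Sigma)$; since $f$ is surjective with connected fibers and a generic fiber of $f$ over $F_i$ is an elliptic curve (Proposition~\ref{prop:b1v}(i)), the preimage $f\inv(F_i)$ has at least one $2$-dimensional irreducible component, and in fact at least as many $2$-dimensional components as $F_i$ has irreducible components: indeed, over the generic point of each irreducible component $F_{i,j}$ of $F_i$, the fiber of $f$ is positive-dimensional (being contained in no discriminant-free locus only when $F_{i,j}$ meets the discriminant, but even an irreducible component added ``for free'' to $\Sigma$ still lies in $\Sigma$ and hence in $D$), so $f\inv(F_{i,j})$ is a $2$-dimensional subvariety of $Z$. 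These subvarieties for distinct $(i,j)$ are distinct since the $F_i$ are disjoint fibers and the $F_{i,j}$ are distinct components. Hence $r\ge\sum_i(\#\text{components of }F_i)\ge s$, with equality forcing each $F_i$ to be irreducible. The one subtlety to handle carefully is whether $f\inv(F_{i,j})$ could fail to be $2$-dimensional, i.e.\ whether $f$ could contract a whole fiber-component to a curve or point; but $f$ surjective onto the surface $Y$ means $\dim f\inv(y)\le 1$ generically over a curve, forcing $f\inv(F_{i,j})$ to be $2$-dimensional. I expect this dimension-counting step to be the routine core of \eqref{sler}.

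Next, for \eqref{rs1}: write $Y\minus\Sigma = Y\minus\pi\inv(\{u_1,\dots,u_s\})$, which fibers over $\PP^1\minus\{u_1,\dots,u_s\}$ via $\pi$. I would apply the Leray/Wang-type argument, or more directly the retraction: choosing the neighborhoods $\DDD_i$ as in the excerpt, $Y\minus\Sigma$ deformation retracts onto $Y$ minus the open sets $N_i$, and $\pi$ restricted to $Y\minus\Sigma$ is a fibration over $\PP^1$ with $s$ punctures removed. Since the generic fiber of $\pi$ is an elliptic curve but $\Sigma$ contains every singular/discriminant fiber, the relevant computation reduces to $b_1$ of the base with its monodromy; using $b_1(Y)=0$ (Proposition~\ref{prop:b1v}(ii), applicable since $b_1(Z)=0$) together with the Leray five-term exact sequence \eqref{Leray5} applied to $\pi|_{Y\minus\Sigma}$, one gets $b_1(Y\minus\Sigma)=b_1(\PP^1\minus\{u_1,\dots,u_s\})+(\text{monodromy contribution})$. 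The cleanest route is a Mayer--Vietoris argument: cover $Y$ by $Y\minus\Sigma$ and $\bigsqcup N_i$, with intersection $\bigsqcup(N_i\minus F_i)$; each $N_i\minus F_i$ retracts onto $\ptl N_i$, and each $N_i$ retracts onto $F_i$. Plugging in $b_1(Y)=b_2(Y)$-controlled data and the table of Betti numbers of $\ptl\tilde N_i$ from Lemma~\ref{l:b2rel}, the alternating sum collapses to $b_1(Y\minus\Sigma)=s-1$; the ``$-1$'' arising exactly as in the standard fact that removing $s$ fibers from a fibration over $\PP^1$ (or $s$ points from $\PP^1$) produces $b_1 = s-1$ of the base contribution, with the fiber contributions to $H_1$ killed by the monodromy on the singular fibers (or absorbed in $b_1(Y)=0$).

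Finally, \eqref{rs2}: the same Mayer--Vietoris strategy applied to $Z = (Z\minus D)\cup f\inv(\bigsqcup N_i)$, with $f\inv(N_i)$ retracting (via the lift of the retraction of $N_i$ onto $F_i$, pulled back through $f$) onto $f\inv(F_i)$, and intersections $f\inv(N_i\minus F_i)$ retracting onto $f\inv(\ptl N_i)$. Using $b_1(Z)=b_2(Z)=0$, the Mayer--Vietoris sequence forces $b_1(Z\minus D)$ to equal the ``number of independent $2$-cycles of $D$ that become boundaries in $Z$'' type count, which by the analysis of the $2$-dimensional components of $D$ equals $r$: each of the $r$ surface components of $D$ contributes one class, and the relations among them in $H_2(Z)=0$ convert precisely into $H_1$-classes of $Z\minus D$. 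More precisely I would use the long exact sequence of the pair $(Z,Z\minus D)$ together with Poincar\'e--Alexander--Lefschetz duality $H_k(Z,Z\minus D)\simeq H^{6-k}(D)$ (as in \eqref{e:PAD}), so that $H_1(Z\minus D)$ sits between $H_2(Z)=0$ and $H_2(Z,Z\minus D)\simeq H^4(D)\simeq\RR^r$ (the top cohomology of the $2$-dimensional part of $D$ has rank $r$, the number of $2$-dimensional irreducible components), and between $H_1(Z)=0$; the map $H_2(Z,Z\minus D)\to H_1(Z\minus D)$ is then an isomorphism onto $H_1(Z\minus D)$ once one checks $H_2(Z\minus D)\to H_2(Z)=0$ and the incoming $H_1(Z)=0$, yielding $b_1(Z\minus D)=r$.

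\textbf{Main obstacle.} The hard part will be \eqref{rs2}: one must verify that $H_2(Z,Z\minus D)\simeq H^4(D)$ really has dimension exactly $r$ (handling the $1$-dimensional components of $D$, which contribute to $H^4(D)$ trivially but could in principle appear in lower-degree terms and perturb the exact sequence), and that the connecting maps behave as claimed given only $b_1(Z)=b_2(Z)=0$ — in particular that no cancellation occurs between the $2$-dimensional-component classes and contributions from the lower-dimensional strata of $D$ or from $H_3(Z\minus D)$. Controlling $H_3(Z\minus D)$ and $H_3(D)$ via the retraction of $f\inv(N_i)$ onto $f\inv(F_i)$ (which is $4$-real-dimensional but a fibered space over a $2$-complex) is where the bookkeeping is most delicate, and is presumably why the authors set things up through $\tilde N_i$ and the explicit monodromy table in Lemma~\ref{l:b2rel}.
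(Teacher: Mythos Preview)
Your approach to \eqref{sler} is fine and matches the paper's one-line argument. Your approach to \eqref{rs2} is also essentially what the paper does: the long exact sequence of the pair $(Z,Z\minus D)$ with $b_1(Z)=b_2(Z)=0$ gives $H^1(Z\minus D)\simeq H^2(Z,Z\minus D)$, and then duality identifies this with the top Borel--Moore homology $H_4^{BM}(D)$, which has dimension exactly $r$ (the paper cites Fulton for this last fact). So your ``main obstacle'' is not where the difficulty lies --- the lower-dimensional components of $D$ simply do not contribute to $H_4^{BM}$, and there is nothing delicate about $H_3$ here.

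The genuine gap is in your argument for \eqref{rs1}. You write that ``$\Sigma$ contains every singular/discriminant fiber,'' but this is not what Definition~\ref{d:rsdef} says: $\Sigma$ consists of fibers of $\pi$ meeting the discriminant of $f$ or containing $\Sing Y$, and there is no reason a singular fiber of $\pi$ (say a nodal cubic over a smooth point of $Y$ away from the discriminant of $f$) must lie in $\Sigma$. Consequently $R^1\pi'_*\RR$ over $\PP^1\minus\{u_1,\dots,u_s\}$ need not be a local system, and your claim that the ``fiber contributions to $H_1$ are killed by the monodromy'' has no foundation. The Mayer--Vietoris alternative you sketch fares no better: the alternating sum does \emph{not} collapse without knowing the image of the connecting map $H_2(Y)\to \bigoplus H_1(\ptl N_i)$, and $b_2(Y)$ is precisely the quantity the whole section is trying to bound, so you cannot feed it in as ``controlled data.''

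The paper avoids all of this by a completely different route for \eqref{rs1}: pass to the very good resolution $\tilde Y$ (so $Y\minus\Sigma\simeq\tilde Y\minus\tilde\Sigma$), apply Lefschetz duality $H^1(\tilde Y\minus\tilde\Sigma)\simeq H_3(\tilde Y,\tilde\Sigma)$, and use the long exact sequence of the pair together with $H_3(\tilde Y)=0$ to identify this with $\Ker[H_2(\tilde\Sigma)\to H_2(\tilde Y)]$. Zariski's lemma then pins down this kernel as the $(s-1)$-dimensional space of relations among the fiber classes $\mf f_1,\dots,\mf f_s$. This argument is indifferent to whether $\Sigma$ contains the singular fibers of $\pi$ and never touches $b_2(Y)$.
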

\begin{proof}
Since the inverse image under $f$ of each irreducible component of $\Sigma$ contains at least one irreducible component of $D$ of dimension $2$, 
the inequality \eqref{sler} and the necessary condition for equality follows immediately. 

For \eqref{rs1}, this is equivalent to $\dim H^1(Y\minus\Sigma) = s-1$.
From the isomorphism $\tilde Y\minus \tilde\Sigma \simeq Y\minus \Sigma$ induced by $\mu$, there is an isomorphism $H^1(Y\minus\Sigma)\simeq H^1(\tilde Y\minus\tilde\Sigma)$.
Lefschetz duality (see \cite[Theorem~6.2.19]{Spanier}) yields an isomorphism 
\begin{align}
H_1 (\tilde{Y} \minus \tilde{\Sigma}) \simeq \bar{H}^3(\tilde Y,\tilde\Sigma),
\end{align}
where $\bar{H}^3 (\tilde Y,\tilde\Sigma)$ is the direct limit of $H^3(\tilde{Y}, U)$ where $U$ ranges over all open neighborhoods $U$ of $\tilde\Sigma$ in $\tilde{Y}$. By Proposition \ref{p:Clemens}, and since we are using real coefficients, we obtain an isomorphism 
\begin{align}
H^1 (\tilde{Y} \minus \tilde{\Sigma}) \simeq H_3(\tilde Y,\tilde\Sigma).
\end{align}
The long exact sequence for homology groups associated with the pair $(\tilde Y,\tilde\Sigma)$, which is 
\begin{align}
\label{e:4les}
\cdots\lras H_3(\tilde Y) \lras H_3(\tilde Y,\tilde \Sigma)\lras H_2(\tilde \Sigma)
\stackrel{\aaa}\lras H_2(\tilde Y)\lras\cdots.
\end{align}
From Proposition~\ref{prop:b1v}, $b_1(\tilde Y) =0$. Since $\tilde Y$ is a compact smooth 4-manifold, Poincar\'e duality then implies that $H_3(\tilde Y)=0$. So from the exact sequence \eqref{e:4les}, we obtain
\begin{align}
H_3(\tilde Y,\tilde\Sigma)\simeq \Ker
\,[\aaa:H_2(\tilde \Sigma)\lras H_2(\tilde Y)
].
\end{align}
As $\tilde\Sigma$ is a disjoint union of $\tilde F_1,\dots,\tilde F_s$, we have a natural decomposition 
\begin{align}\label{rs3}
H_2(\tilde \Sigma)\simeq \oplus_{i=1}^s H_2(\tilde F_i).
\end{align}
Let $\mf f_i\in H_2(\tilde F_i)$ be the image of the fiber class of $\tilde\pi|_{\tilde N_i}:\tilde N_i\lras \DDD_i$ under the isomorphism $H_2(\tilde N_i)\simeq H_2(\tilde F_i)$ induced by the deformation retraction.
By Proposition \ref{p:Clemens}, if $\tilde F_i = \sum_{j}m_{ij} \tilde F_{ij}$ is the decomposition of the fiber into irreducible components, then $\mf f_i = 
\sum_{j}m_{ij} [\tilde F_{ij}]$.
We will next show that, under the isomorphism \eqref{rs3}, 
\begin{align}\label{rs}
 \Ker
\,[\aaa:H_2(\tilde \Sigma)\to H_2(\tilde Y)
]=\big\{(c_1 \mf f_1, \dots, c_s \mf f_s)
\in \oplus_{i=1}^s H_2(\tilde F_i)\set c_1+ \dots + c_s = 0\big\},
\end{align}
which clearly means that $\dim (\Ker\aaa) = s-1$.
The inclusion ``$\supset$'' is immediate since $\aaa(\mf f_i)$ is the fiber class of $\tilde\pi$ for any $i$.
For the reverse inclusion, take any element $\xi\in \Ker\aaa$.
From~\eqref{rs3}, $\xi$ can be expressed as
$\xi = \sum_{i=1}^s \xi_i$, where $\xi_i\in H_2(\tilde F_i)$.
Then 
$\aaa(\xi) = \sum_{i=1}^s \aaa(\xi_i)$.
For the self-intersection number of $\aaa(\xi)$, since $\tilde F_i\cap \tilde F_j = \emptyset$ if $i\neq j$, 
\begin{align}\label{rs4}
\aaa(\xi)^2 = \sum_{i=1}^s \aaa(\xi_i)^2.
\end{align}
By Zariski's lemma, $\aaa(\xi_i)^2 \le 0$ for any $i$.
Hence, because $\aaa(\xi)^2 = 0$ as $\xi\in\Ker\aaa$,
from \eqref{rs4}, we obtain $\aaa(\xi_i)^2=0$ for any $i$.
Again by Zariski's lemma, this implies $\xi_i = q_i\mf f_i$ for some $q_i\in\QQ$.
Hence, $\xi = \sum_{i=1}^s q_i\mf f_i$, which immediately implies the inclusion ``$\subset$'' in \eqref{rs}. 
Therefore $b_1(Y\minus\Sigma) = s-1$.
 
Finally, we prove \eqref{rs2}. Since $b_1(Z) = b_2(Z) = 0$, the long exact sequence in cohomology for the pair $(Z, Z \minus D)$ implies that 
\begin{align}
H^1(Z \minus D) \simeq H^2(Z, Z \minus D). 
\end{align}
Since $Z$ is an oriented $6$-manifold, from \cite[Lemma~B.2]{Fulton}, we have that
\begin{align}
H^2(Z, Z \minus D) \simeq H_4^{BM}(D), 
\end{align}
where $H_4^{BM}(D)$ denotes the Borel-Moore homology of $D$. By \cite[Lemma~B.4]{Fulton} (the proof of which holds in the analytic case), this is equal to the number of irreducible components of $D$ of dimension $2$,
which proves~\eqref{rs2}. 
\end{proof}

Continuing the estimation of $H^2(Y,Y\minus\Sigma)$, by excision, we have an isomorphism 
\begin{align}\label{rcs0}
H_2(Y,Y\minus\Sigma) \simeq \bigoplus_{i=1}^s H_2 (N_i, N_i\minus F_i),
\end{align}
where the coefficient rings are again $\RR$.
Let $E_i$ denote the exceptional curve of the birational mapping $\mu_i:=\mu|_{\tilde N_i}: \tilde N_i\lras N_i$ (with $E_i = \emptyset$ if $N_i$ is smooth).  
\begin{lemma} 
\label{l:b1}
For each $1 \leq i \leq s$, we have
\begin{align}
\label{e:leq0}
b_1(\tilde F_i) = b_1(F_i) + b_1(E_i) 
\end{align}
\end{lemma}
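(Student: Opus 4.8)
The plan is to analyze the birational morphism $\mu_i : \tilde N_i \to N_i$ and see exactly how the exceptional curve $E_i$ contributes to the first Betti number of the central fiber. Recall from Definition~\ref{d:rsdef} that $\Sigma$ consists of \emph{entire} fibers of $\pi$, and all singularities of $Y$ lie in $\Sigma$; hence the singular locus of $N_i$ is contained in $F_i$, and $\mu_i$ is an isomorphism over $N_i \minus F_i$. In particular $E_i = \mu_i^{-1}(F_i)_{\mathrm{red}} \setminus (\text{strict transform})$ is a union of trees/cycles of rational curves lying in $\tilde F_i$, and $\tilde F_i$ is obtained from $F_i$ by inserting the curves of $E_i$ at the finitely many singular points of $N_i$ along $F_i$. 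The strategy is: first reduce to a statement about dual graphs, then use the formula $b_1 = 2g + b$ (from Proposition~\ref{p:scri}, i.e. \cite[Prop.~2.3.1]{Dimca}) applied to $\tilde F_i$, $F_i$, and each connected component of $E_i$ separately.

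First I would set up the topology. Since $\mu_i$ is a proper modification that is an isomorphism outside $F_i$ and contracts $E_i$ onto the (finitely many) singular points $q_1,\dots,q_m \in F_i$, I would argue that $F_i$ is the space obtained from $\tilde F_i$ by collapsing each connected component of $E_i$ to a point. Concretely, $E_i$ decomposes as a disjoint union $E_i = E_i^{(1)} \sqcup \cdots \sqcup E_i^{(m)}$ with $E_i^{(\ell)} = \mu_i^{-1}(q_\ell)$, and $\mu_i$ restricted to $\tilde F_i$ is the quotient map collapsing each $E_i^{(\ell)}$. Using that $(\tilde F_i, E_i^{(1)} \sqcup\cdots\sqcup E_i^{(m)})$ is a good pair — which follows from Proposition~\ref{p:Lamotke} exactly as in the proof of Proposition~\ref{p:blk3} — the long exact sequence of the pair gives, with $\RR$ coefficients,
\begin{align}
\cdots \lras H_2\big(\textstyle\bigsqcup_\ell E_i^{(\ell)}\big) \lras H_2(\tilde F_i) \lras H_2(F_i) \lras H_1\big(\textstyle\bigsqcup_\ell E_i^{(\ell)}\big) \lras H_1(\tilde F_i) \lras H_1(F_i) \lras 0.
\end{align}
The map $H_2(\bigsqcup_\ell E_i^{(\ell)}) \to H_2(\tilde F_i)$ is injective because the intersection form on the components of each $E_i^{(\ell)}$ is negative definite (they are contractible exceptional curves), so those classes are linearly independent in $H_2(\tilde Y)$ hence in $H_2(\tilde F_i)$; the same argument appears in the proofs of Proposition~\ref{p:blk3} and Proposition~\ref{p:b2}. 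From this, the exact sequence yields $b_1(\tilde F_i) = b_1(F_i) + b_1\big(\bigsqcup_\ell E_i^{(\ell)}\big) = b_1(F_i) + b_1(E_i)$, which is exactly \eqref{e:leq0}.

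The one point requiring care — and the main obstacle — is justifying that no extra $H_2$ or $H_1$ classes are created or destroyed in the collapse beyond the bookkeeping above; that is, controlling the connecting maps in the sequence. The injectivity of $H_2(E_i) \to H_2(\tilde F_i)$ handles the $H_2$ end; for the $H_1$ end one must check that $H_1(E_i) \to H_1(\tilde F_i)$ has the expected behavior, but since $F_i$ is itself (possibly a non-reduced) fiber of an elliptic fibration, its reduction has a dual graph of the standard Kodaira type, and collapsing $E_i^{(\ell)}$ — each of which is a \emph{connected} subcurve sitting at a single point $q_\ell$ of $F_i$ — merges the cycles/genus of $E_i^{(\ell)}$ into $\tilde F_i$ in the evident additive way. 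I expect that once the good-pair structure is in place, the remaining verification is the standard linear-algebra argument with $b_1 = 2g+b$ applied componentwise, and the details are of the same routine character as those the authors have already suppressed in Propositions~\ref{p:blk3} and~\ref{p:b2}.
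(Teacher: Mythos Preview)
Your overall strategy is close to the paper's, but there is a genuine gap in the exact sequence you write down. For a good pair $(X,A)$ one has $H_q(X,A)\simeq \tilde H_q(X/A)$, where $X/A$ collapses \emph{all} of $A$ to a \emph{single} point. When $E_i=E_i^{(1)}\sqcup\cdots\sqcup E_i^{(m)}$ has $m>1$ components, the quotient $\tilde F_i/E_i$ is therefore \emph{not} $F_i$ (where each $E_i^{(\ell)}$ goes to its own point $q_\ell$), but rather the further quotient $\ol F_i:=F_i/\{q_1,\dots,q_m\}$. Consequently your identification $H_1(\tilde F_i,E_i)\simeq H_1(F_i)$ is off by $m-1$: one has $b_1(\tilde F_i,E_i)=b_1(\ol F_i)=b_1(F_i)+(m-1)$. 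Likewise the sequence does not terminate at $0$ after that term; the next map goes to $\tilde H_0(E_i)\simeq\RR^{m-1}$. So the displayed ``long exact sequence of the pair'' with $H_2(F_i)$ and $H_1(F_i)$ in the relative slots, ending in $0$, is not correct as written when $m>1$.

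The paper handles exactly this issue by introducing the intermediate space $\ol F=\tilde F/C$ and running \emph{two} exact sequences: one for the pair $(F,\{p_1,\dots,p_k\})$, giving $b_1(\ol F)=b_1(F)+k-1$, and one for the pair $(\tilde F,C)$, giving $b_1(\tilde F)=b_1(C)+b_1(\ol F)-k+1$; the $k-1$'s cancel and \eqref{e:leq0} follows. Your single-sequence argument can be repaired in the same spirit: keep $H_q(\tilde F_i,E_i)$ honestly as $\tilde H_q(\ol F_i)$, carry the sequence out to $\tilde H_0(E_i)\simeq\RR^{m-1}$, and the two stray $(m-1)$'s again cancel in the alternating sum. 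Also, your injectivity of $H_2(E_i)\to H_2(\tilde F_i)$ is correct but the appeal to negative definiteness is unnecessary here: $\tilde F_i$ is a curve, so $H_2$ is freely generated by irreducible components, and the components of $E_i$ are simply a subset of those of $\tilde F_i$. The paper instead checks the equivalent statement that $H_2(\tilde F)\to H_2(\tilde F,C)$ is surjective, for the same elementary reason.
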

\begin{proof}
In the following, we omit the index $i$ for $N_i,F_i,\tilde F_i, \mu_i$
and also the coefficient ring $\RR$ for homology groups. Let $p_1,\dots, p_k$ be all the singularities of $N$ and
$C_1,\dots, C_k$ the exceptional curves over $p_1,\dots, p_k$ respectively.
For simplicity, we put $C=C_1+\dots+C_k$ (so $E_i = C$ in the notation of the proposition). We denote $\ol F :=\tilde{F}/C$, which is the curve obtained from $\tilde{F}$ by identifying all points of $C$.  The quotient map $\tilde F \rightarrow \ol F$ factors as 
$$
\tilde F \stackrel{\mu}\lras F \lras \ol F,
$$
where the latter mapping is the quotient mapping that identifies $p_1,\dots, p_k$. 
The pair  $(F,\{p_1,\dots, p_k\})$ is a good pair, so from \cite[Proposition~2.22]{Hatcher}, we have the isomorphism
$H_q(F, \{p_1,\dots, p_k\} ) \simeq \tilde{H}_q(\ol F)$ for all $q \geq 0$, where $\tilde{H}_q$ denotes the reduced homology. Then 
the long exact sequence associated with the pair $(F,\{p_1,\dots, p_k\})$ is
\begin{align}
0 \lras H_1(F) \lras H_1 (\ol F) \lras  
\RR^k \lras H_0(F) \lras \tilde{H}_0 (\ol F) \lras 0.
\end{align}  
Since $F$ and $\ol F$ are connected, this yields the equality
\begin{align}
\label{e:leq1}
b_1( \ol{F}) = b_1(F) + k - 1. 
\end{align}
Next, the pair $(\tilde{F},C)$ is a good pair, so from \cite[Proposition~2.22]{Hatcher}, we have that 
$H_q(\tilde{F}, C) \simeq \tilde{H}_q(\ol F)$ for all $q \geq 0$. 
Since the second Betti number of any curve is equal to the number of irreducible components of the curve, the natural mapping from 
\begin{align}
H_2(\tilde{F}) \lras H_2( \tilde{F}, C) \simeq H_2( \ol F)
\end{align}
is clearly surjective. So the long exact sequence in homology for the pair $(\tilde{F},C)$ is 
\begin{align}
0 \lras H_1(C) \lras H_1(\tilde F) \lras H_1( \ol F) 
\lras H_0(C) \lras H_0(\tilde{F}) \lras \tilde{H}_0(\ol F) \lras 0.
\end{align}
Since $\tilde{F}$ and $\ol F$ are connected, This yields the equality 
\begin{align}
\label{e:leq2}
b_1( \tilde{F}) = b_1(C) + b_1( \ol F) -k +1.
\end{align}
Combining \eqref{e:leq1} and \eqref{e:leq2} yields \eqref{e:leq0}. 
\end{proof}
This lemma is used in the proof of the following. 
\begin{proposition}\label{p:b2relest}
For each $1\le i\le s$, 
$b_2(N_i,N_i\minus F_i)$ is at most
\begin{itemize}
\item
$b_2(F_i)$ if $N_i$ is smooth or has only rational tree singularities,
\item
$b_2(F_i) + 1$ if $N_i$ has a unicyclic singularity,
\item
$b_2(F_i) + 2$  if $N_i$ has an elliptic tree singularity.
\end{itemize}

\end{proposition}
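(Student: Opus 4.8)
The plan is to read off $\dim H_2(N_i,N_i\minus F_i;\RR)$ from the long exact homology sequence of the pair, identifying the two maps that matter by means of the resolution $\mu_i:\tilde N_i\to N_i$, Lemma~\ref{l:b2rel}, and Proposition~\ref{p:Clemens}; all coefficients are $\RR$. From the pair $(N_i,N_i\minus F_i)$ one gets
\[
H_2(N_i\minus F_i)\stackrel{\ccc}\lras H_2(N_i)\lras H_2(N_i,N_i\minus F_i)\lras H_1(N_i\minus F_i)\stackrel{\iota}\lras H_1(N_i),
\]
hence $\dim H_2(N_i,N_i\minus F_i)=\dim\Coker(\ccc)+\dim\Ker(\iota)$. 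Since $N_i$ deformation retracts onto $F_i$ by Proposition~\ref{p:Clemens}~(i), we have $\dim H_2(N_i)=b_2(F_i)$ and $\dim H_1(N_i)=b_1(F_i)$; and since $\mu_i$ is a biholomorphism over $N_i\minus F_i$ (because $\mu_i\inv(F_i)=\tilde F_i\supseteq E_i$), both $\ccc$ and $\iota$ factor through $H_*(\tilde N_i)$ via the identification $\tilde N_i\minus\tilde F_i\cong N_i\minus F_i$. For the cokernel, $\ccc$ is the composite $H_2(\tilde N_i\minus\tilde F_i)\to H_2(\tilde N_i)\stackrel{(\mu_i)_*}\lras H_2(N_i)$, whose first arrow has $1$-dimensional image generated by the fiber class of $\tilde\pi$ by Lemma~\ref{l:b2rel}; pushing this class forward gives the fiber class of $\pi$, which under $H_2(N_i)\cong H_2(F_i)$ equals $\sum_j m_j[F_{i,j}]$ by Proposition~\ref{p:Clemens}~(ii) and is nonzero because the fundamental classes of the irreducible components of $F_i$ are linearly independent in $H_2(F_i)$. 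Thus $\Image(\ccc)$ is exactly $1$-dimensional and $\dim\Coker(\ccc)=b_2(F_i)-1$.

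It remains to compute $\dim\Ker(\iota)$, and for this I will show $\iota$ is surjective. It factors as $H_1(\tilde N_i\minus\tilde F_i)\to H_1(\tilde N_i)\stackrel{(\mu_i)_*}\lras H_1(N_i)$. The first arrow is onto: with $\tilde N_i$ a smooth oriented $4$-manifold and $\tilde F_i$ compact of real dimension $2$, Poincar\'e--Alexander--Lefschetz duality \cite[Theorem~VI.8.3]{Bredon} gives $H_1(\tilde N_i,\tilde N_i\minus\tilde F_i)\cong\check H^3(\tilde F_i)=H^3(\tilde F_i)=0$ --- the passage from \v{C}ech to singular cohomology being justified by Proposition~\ref{p:Clemens}~(i) exactly as in Lemma~\ref{l:b2rel} --- so the long exact sequence of the pair forces surjectivity. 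The second arrow $(\mu_i)_*:H_1(\tilde F_i)\to H_1(F_i)$ is onto because $F_i$ is obtained from $\tilde F_i$ by collapsing each connected component of $E_i$ to a point, as in the proof of Lemma~\ref{l:b1}. Hence $\dim\Ker(\iota)=b_1(N_i\minus F_i)-b_1(F_i)$. Finally $N_i\minus F_i=\tilde N_i\minus\tilde F_i$ deformation retracts onto $\partial\tilde N_i$ by Proposition~\ref{p:Clemens}~(iii); since $\partial\tilde N_i$ is a closed oriented $3$-manifold, $b_1(\partial\tilde N_i)=b_2(\partial\tilde N_i)$, which equals $b_1(\tilde F_i)+1$ by the computation in the proof of Lemma~\ref{l:b2rel}. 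With Lemma~\ref{l:b1} this gives $b_1(N_i\minus F_i)=b_1(\tilde F_i)+1=b_1(F_i)+b_1(E_i)+1$, so $\dim\Ker(\iota)=b_1(E_i)+1$ and altogether $\dim H_2(N_i,N_i\minus F_i)=(b_2(F_i)-1)+(b_1(E_i)+1)=b_2(F_i)+b_1(E_i)$.

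To deduce the stated bounds, write $E_i=\bigsqcup_j E_{i,j}$ with $E_{i,j}$ the exceptional curve over the $j$-th singularity of $N_i$, which lies on $F_i$; by Proposition~\ref{p:scri}, $b_1(E_{i,j})$ is $0$, $1$, or $2$ according as that singularity is a rational tree, unicyclic, or elliptic tree singularity. Because $F_i$ is a fiber of the elliptic fibration $\pi$, Kodaira's classification of singular fibers (used as in the proof of Proposition~\ref{p:a1s}) shows that $\tilde F_i$, and hence $E_i$, contains at most one independent cycle and at most one component of positive genus, and never both; consequently $b_1(E_i)=\sum_j b_1(E_{i,j})$ equals $0$ when $N_i$ is smooth or has only rational tree singularities, $1$ when $N_i$ has a unicyclic singularity, and $2$ when $N_i$ has an elliptic tree singularity. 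Substituting into $\dim H_2(N_i,N_i\minus F_i)=b_2(F_i)+b_1(E_i)$ yields the proposition (in fact with equality in each case).

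The step I expect to be most delicate is the surjectivity of $H_1(\tilde N_i\minus\tilde F_i)\to H_1(\tilde N_i)$: one is applying Poincar\'e--Alexander--Lefschetz duality with $\tilde F_i$ a genuinely singular normal-crossing curve rather than a submanifold, so some care is needed to see that the \v{C}ech cohomology appearing there agrees with ordinary cohomology --- which is precisely where the good-neighborhood statements of Propositions~\ref{p:Lamotke} and~\ref{p:Clemens}~(i) enter. This is, however, completely parallel to the duality bookkeeping already carried out in Lemma~\ref{l:b2rel} and in the proof of Proposition~\ref{p:rs}, so I do not anticipate any genuinely new difficulty.
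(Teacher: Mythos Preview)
Your proof is correct and uses the same ingredients as the paper's --- the long exact sequence of the pair $(N_i,N_i\minus F_i)$, the comparison with the resolution $\tilde N_i$, Lemma~\ref{l:b2rel}, Lemma~\ref{l:b1}, and Proposition~\ref{p:Clemens} --- but the organization is slightly different and in fact a little sharper. The paper rewrites the exact sequence as
\[
b_2(N,N\minus F)=b_2(F)-b_1(F)+b_3(N,N\minus F)+b_1(N,N\minus F),
\]
shows $b_1(N,N\minus F)=0$ via a commutative square (equivalent to your surjectivity of $\iota$), and then bounds $b_3(N,N\minus F)\le b_3(\tilde N,\tilde N\minus\tilde F)=b_1(\tilde F)$ by proving the comparison map $H_3(\tilde N,\tilde N\minus\tilde F)\to H_3(N,N\minus F)$ is surjective via the Four Lemma; this yields only the inequality $b_2(N,N\minus F)\le b_2(F)+b_1(E_i)$. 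You instead compute $\dim\Image(\ccc)=1$ directly from Lemma~\ref{l:b2rel} and the nonvanishing of the fiber class in $H_2(F_i)$, and compute $\dim\Ker(\iota)$ exactly from $b_1(N_i\minus F_i)=b_1(\tilde F_i)+1$ and Lemma~\ref{l:b1}, arriving at the \emph{equality} $b_2(N_i,N_i\minus F_i)=b_2(F_i)+b_1(E_i)$. That is a mild sharpening (and avoids the Four Lemma), though only the inequality is used downstream in Corollary~\ref{cor:b2relY}.
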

\begin{proof}
Again, we omit the index $i$ and the coefficient ring $\RR$ for the homology groups.

First, if $N$ is smooth, then by a duality argument as in \eqref{e:PAD}, 
$H_2( N, N\minus F)\simeq H^2(F)$. This means $b_2(N,N\minus F) = b_2(F)$.
So the proof in this case is over. In the sequel, we assume that $N$ is singular and consider the long exact sequence of homology groups associated with the pair $(N,N\minus F)$, which is
\begin{multline}\label{les:NF4}
H_3(N)\lras H_3(N,N\minus F) \lras H_2(N\minus F) \lras H_2(N) \lras H_2(N,N\minus F)\\
\lras H_1(N\minus F) \lras H_1(N) \lras H_1(N,N\minus F)
\lras H_0(N\minus F) \lras H_0(N).
\end{multline}
In a similar way to the proof of Proposition \ref{p:rs}, using Proposition \ref{p:Clemens},
we obtain $H_q(N)\simeq H_q(F)$ for any $q\ge 0$,
$H_q(N\minus F)\simeq H_q(\ptl N)$ for any $q\ge 0$, and $b_2(N\minus F) = b_1(N\minus F)$. 
Further, since $N\minus F$ and $N$ are connected, the homomorphism
$H_0(N\minus F) \rightarrow H_0(N)$ is isomorphic and therefore
the homomorphism
$H_1(N) \rightarrow H_1(N,N\minus F)$ is surjective.
From these, the long exact sequence \eqref{les:NF4} yields
\begin{align}\label{NF2}
b_2(N,N\minus F) = b_2(F) - b_1(F) + b_3(N,N\minus F) + b_1(N,N\minus F).
\end{align}
As before, let $\mu:\tilde N\rightarrow N$ be a very good resolution of all singularities of $N$ and $\tilde F:=\mu\inv(F)$.
Then there is a commutative diagram
\[
  \begin{CD}
H_1(\tilde N\minus\tilde F) @>>> 
H_1(\tilde N) @>>> H_1(\tilde N,\tilde N\minus\tilde F) @>>>0 \\
 @V{\simeq}V{\mu_*}V    @VV{\mu_*}V @VV{\mu_*}V\\
   H_1(N\minus F) @>>> 
H_1(N) @>>> H_1(N,N\minus F) @>>>0
  \end{CD}
\]
where both rows are exact and the left vertical isomorphism is from the obvious isomorphism $\tilde N\minus \tilde F\simeq N\minus F$.
Since $\tilde N$ is smooth, we again have $H_1(\tilde N,\tilde N\minus\tilde F)=0$ from the duality argument.
Further, the middle vertical homomorphism $\mu_*$ is surjective since $\mu$ has connected fibers.
From this, we readily obtain that the homomorphism $H_1(N\minus F) \rightarrow
H_1(N)$ is surjective. From the second row, this means $H_1(N,N\minus F)=0$. Thus, we always have $H_1(N,N\minus F)=0$.

Next, we consider a similar commutative diagram, which is 
\[
  \begin{CD}
0 @>>>H_3 \big(\tilde N, \tilde N\minus\tilde F\big) @>>> 
H_2\big(\tilde N \minus \tilde F\big) @>{\bbb_i}>> H_2 \big(\tilde N\big) \\
@VV{\aaa_0}V @VV{\aaa_1}V    @V{\aaa_2}V{\simeq}V @VV{\aaa_3}V\\
0 @>>>   H_3(N, N\minus F) @>>> 
H_2(N\minus F) @>>> H_2(N) 
  \end{CD}
\]
By Lemma \ref{l:b2rel}, the image of the homomorphism $\bbb_i$ in the diagram is 1-dimensional which is generated by the image of the fiber class of the elliptic fibration.
Let $\tilde F=\sum _{1\le j\le n} m_j \tilde F_{j}$ be the decomposition of the fiber $\tilde F$ into the irreducible components.
By renumbering, we may suppose that for some $n'$ with $1<n'\le n$, the components $\tilde F_{n'},\dots, \tilde F_{n}$ are the ones that are contracted by $\mu:\tilde N\rightarrow N$.
Then since $\aaa_3$ is induced by $\mu$, we obtain 
\begin{align}
\aaa_3\Big( \sum _{1\le j\le n} m_j \big[\tilde F_{j}\big] \Big) = 
\sum _{1\le j\le n'-1} m_j \,\big[\mu (\tilde F_{j})\big].
\end{align}
Since $n'>1$, this means that the composition $\aaa_3\circ \bbb_i$ is not the zero mapping.  Replacing $H_2(\tilde N)$ with the image of $\bbb_i$, the first row in the diagram is still exact and $\aaa_3$ is then injective.
Therefore, by the Four Lemma (see \cite[p.~14]{MacLane}), the homomorphism $\aaa_1$ in the diagram is surjective.
Therefore,
\begin{align}\label{}
b_3(N,N\minus F) \le b_3(\tilde N,\tilde N\minus \tilde F)
= b_1(\tilde F),
\end{align}
where the equality is from duality.
Hence, using $H_1(N,N\minus F)=0$, we obtain from \eqref{NF2}
\begin{align}\label{NF3}
b_2(N,N\minus F) \le b_2(F) - b_1(F) + b_1(\tilde F).
\end{align}

We use this inequality to obtain the desired estimate.
If $N$ has only rational tree singularities, then by Proposition~\ref{p:scri},
every connected component of the exceptional divisor of $\mu$ satisfies $b_1=0$.
Therefore, by Lemma~\ref{l:b1}, $b_1(\tilde F) = b_1(F)$. If $N$ has an elliptic tree singularity, then again by Proposition~\ref{p:scri}, the exceptional divisor over it satisfies $b_1 = 2$. Further, $N$ has at most one elliptic tree singularity and cannot have a unicyclic singularity. By Lemma~\ref{l:b1}, 
because a rational tree singularity does not change $b_1$ as above,
this means  $b_1(\tilde F) = b_1(F) + 2$.
The same argument using Proposition~\ref{p:scri} and Lemma~\ref{l:b1} implies 
that if $N$ has a unicyclic singularity, then
$b_1(\tilde F) = b_1(F)+1$.
From these, the inequality~\eqref{NF3} means the desired estimate.
\end{proof}
From the decomposition \eqref{rcs0} and Proposition~\ref{p:b2relest},
we obtain
\begin{corollary}\label{cor:b2relY}
Let $n_e$ and $n_u$ be as in Definition~\ref{d:nenu}. 
Then we have the estimate
\begin{align}
b_2(Y,Y\minus\Sigma) \le b_2(\Sigma) +  2n_e + n_u.
\end{align}
\end{corollary}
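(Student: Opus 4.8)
The plan is to insert the local estimates of Proposition~\ref{p:b2relest} into the excision isomorphism \eqref{rcs0} and then keep track of the singularities. Since \eqref{rcs0} is an isomorphism with real coefficients,
\[
b_2(Y,Y\minus\Sigma)=\sum_{i=1}^s b_2\big(N_i,N_i\minus F_i\big).
\]
By Proposition~\ref{p:a1s}(ii), every irreducible component of the exceptional divisor of a very good resolution of $Y$ lies in a fiber of $\pi$, so every singular point of $Y$ lies in some fiber of $\pi$; by Definition~\ref{d:rsdef} that fiber must be one of $F_1,\dots,F_s$. As the $F_i$ are pairwise disjoint and $N_i=\pi\inv(\DDD_i)$ with the $\DDD_i$ disjoint, the singularities of $Y$ are partitioned among the $N_i$, with every singularity of $Y$ lying in $N_i$ being a singularity of $F_i$. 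I let $n_e^{(i)}$ and $n_u^{(i)}$ denote the number of elliptic tree and of unicyclic singularities of $Y$ lying in $N_i$, so that $n_e=\sum_i n_e^{(i)}$ and $n_u=\sum_i n_u^{(i)}$.

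The key step is the single inequality
\[
b_2\big(N_i,N_i\minus F_i\big)\le b_2(F_i)+2n_e^{(i)}+n_u^{(i)}
\]
for each $i$. The cleanest route is to read off from the proof of Proposition~\ref{p:b2relest} the sharp bound \eqref{NF3}, that is $b_2(N_i,N_i\minus F_i)\le b_2(F_i)-b_1(F_i)+b_1(\tilde F_i)$, and to combine it with Lemma~\ref{l:b1} in the form $b_1(\tilde F_i)=b_1(F_i)+b_1(E_i)$ and with Proposition~\ref{p:scri}, which gives $b_1(E_i)=2n_e^{(i)}+n_u^{(i)}$ upon summing the contribution $2$, $1$, or $0$ of each connected component of $E_i$ according to its type. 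Alternatively, one can quote the three cases of Proposition~\ref{p:b2relest} verbatim: they are exhaustive since $\tilde F_i$ is a fiber of the elliptic fibration $\tilde\pi$, so by Kodaira's classification $b_1(\tilde F_i)\le 2$ and the dual graph of $\tilde F_i$ carries at most one cycle, which forces $N_i$ to have at most one non-rational-tree singularity, and no unicyclic singularity if that one is an elliptic tree singularity.

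Finally, summing over $i$ and using $b_2(\Sigma)=\sum_{i=1}^s b_2(F_i)$ (as $\Sigma=F_1\sqcup\cdots\sqcup F_s$) together with $n_e=\sum_i n_e^{(i)}$ and $n_u=\sum_i n_u^{(i)}$ yields
\[
b_2(Y,Y\minus\Sigma)\le\sum_{i=1}^s b_2(F_i)+\sum_{i=1}^s\big(2n_e^{(i)}+n_u^{(i)}\big)=b_2(\Sigma)+2n_e+n_u,
\]
which is the assertion. I do not expect any real obstacle here beyond this bookkeeping, since all the substantive work sits in Proposition~\ref{p:b2relest}; the only point that needs a moment's care is the claim that each elliptic tree and each unicyclic singularity of $Y$ is counted in exactly one $N_i$, which holds because the fibers $F_i$ are disjoint and $\Sigma$ was defined to contain every fiber of $\pi$ through a singularity.
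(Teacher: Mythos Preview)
Your proposal is correct and follows the same approach as the paper: the paper simply states that the corollary follows from the excision decomposition \eqref{rcs0} together with Proposition~\ref{p:b2relest}, and you have spelled out precisely that bookkeeping. Your additional remark that each $N_i$ can host at most one non-rational-tree singularity (so that $n_e^{(i)}+n_u^{(i)}\le 1$ and the three cases of Proposition~\ref{p:b2relest} are exhaustive) is exactly the observation used inside the proof of that proposition.
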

Using all these results, we can now prove the main result of this section, which completes the proof of Theorem~\ref{thm:main1}.
\begin{proposition}
\label{p:aneq1}
Let $Z$ be a 3-dimensional compact complex manifold with $b_1(Z) = b_2(Z) = 0$ and $f:Z\rightarrow Y$ a holomorphic mapping with connected fibers to a normal complex surface $Y$ with $a(Y)=1$.
Then $f$ is not surjective.
\end{proposition}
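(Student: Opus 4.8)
The plan is to assume that $f$ is surjective and contradict $a(Y)=1$, by trapping $b_2(Y)$ between an upper bound coming from the Leray spectral sequence of $f$ and a lower bound coming from the elliptic surface structure. Throughout I would keep the elliptic fibration $\pi:Y\to\PP^1$ of Proposition~\ref{p:ell}, the decomposition $\Sigma=F_1\sqcup\cdots\sqcup F_s$ from Definition~\ref{d:rsdef}, a very good resolution $\mu:\tilde Y\to Y$, and the associated data $\tilde\pi=\pi\circ\mu$, $\tilde F_i=\tilde\pi\inv(u_i)$, $E=\mu\inv(\Sing Y)=E_1\sqcup\cdots\sqcup E_s$ with $E_i\subset\tilde F_i$. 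Write $\rho_i$ and $\tilde\rho_i$ for the number of irreducible components of $F_i$ and of $\tilde F_i$, so that $b_2(\Sigma)=\sum_i\rho_i$, and let $R$ denote the total number of ``extra'' irreducible components of the reducible fibers of $\tilde\pi$, that is $R=\sum_v(\#\{\text{components of }\tilde\pi\inv(v)\}-1)$.

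First I would establish the upper bound. Since $\Sigma$ contains the discriminant locus of $f$, the restriction $f'=f|_{Z\minus f\inv(\Sigma)}$ is a smooth $T^2$-bundle over $Y\minus\Sigma$, so exactly as in the proof of Proposition~\ref{p:b2bdd} one has $\dim H^0(Y\minus\Sigma,R^1f'_*\RR)\le 2$. Feeding this bound, Corollary~\ref{cor:b2relY}, and the equality $b_1(Y\minus\Sigma)=s-1$ of Proposition~\ref{p:rs} into inequality~\eqref{b2Y0} of Proposition~\ref{p:b2bdd0} gives
$$b_2(Y)\ \le\ b_2(\Sigma)+2n_e+n_u-s+3.$$

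For the lower bound I would use two ingredients from the proof of Proposition~\ref{p:b2}: the identity $b_2(Y)=b_2(\tilde Y)-b_2(E)+2n_e+n_u$ (equation~\eqref{betti2}, valid since $b_1(\tilde Y)=0$ by Proposition~\ref{prop:b1v}(ii)) and the inequality $b_2(\tilde Y)\ge R+2+2p_g(\tilde Y)$ (inequality~\eqref{betti5}, whose proof uses only Zariski's lemma and the Hodge decomposition and so applies to the non--relatively-minimal surface $\tilde Y$). Since $\mu$ is an isomorphism over $Y_{\reg}$, the irreducible components of $\tilde F_i$ are the proper transforms of the $\rho_i$ components of $F_i$ together with the $b_2(E_i)$ components of $E_i$, so $\tilde\rho_i=\rho_i+b_2(E_i)$ and hence $b_2(E)=\sum_i\tilde\rho_i-b_2(\Sigma)$. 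Combining this with the identity for $b_2(Y)$ and the upper bound above, and using $R\ge\sum_i(\tilde\rho_i-1)=\sum_i\tilde\rho_i-s$ (because the $\tilde F_i$ with $\tilde\rho_i\ge 2$ are reducible fibers of $\tilde\pi$), one obtains $b_2(\tilde Y)\le\sum_i\tilde\rho_i-s+3\le R+3$. Comparing with $b_2(\tilde Y)\ge R+2+2p_g(\tilde Y)$ forces $p_g(\tilde Y)=0$. But $\tilde Y$ is compact K\"ahler (Proposition~\ref{prop:b1v}(ii)) and $h^{2,0}(\tilde Y)=p_g(\tilde Y)=0$, so $H^2(\tilde Y,\RR)=H^{1,1}(\tilde Y,\RR)$; the K\"ahler cone of $\tilde Y$ therefore contains a rational class, and $\tilde Y$ is projective by the Kodaira embedding theorem. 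Hence $a(\tilde Y)=2$, contradicting $a(Y)=a(\tilde Y)=1$. Therefore $f$ cannot be surjective.

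The individual estimates are all in place from Sections~\ref{s:constraints}--\ref{s:elliptic}, so the hard part will be the bookkeeping that makes the term $b_2(\Sigma)$ cancel between the Leray upper bound and the resolution identity $b_2(Y)=b_2(\tilde Y)-b_2(E)+2n_e+n_u$, together with the verification that no ``bonus'' in Corollary~\ref{cor:b2relY} is wasted --- in particular that each fiber $F_i$ of $\pi$ carries at most one non--rational-tree singularity of $Y$, so that Proposition~\ref{p:b2relest} may be applied fiber by fiber with $\sum_i(\text{bonus for }N_i)=2n_e+n_u$. One must also treat the degenerate case $\Sigma=\emptyset$: then $Y=\tilde Y$ is a smooth elliptic surface over $\PP^1$, the relation $b_1(Y\minus\Sigma)=b_1(Y)=0$ replaces $b_1(Y\minus\Sigma)=s-1$, the same chain of inequalities yields $b_2(Y)\le 2$, hence $p_g(Y)=0$, and one concludes as before.
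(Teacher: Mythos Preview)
Your proposal is correct and in fact streamlines the paper's argument. Both proofs start from the same upper bound \eqref{b2Y0} together with Corollary~\ref{cor:b2relY} and $b_1(Y\minus\Sigma)=s-1$, and both finish by showing $p_g(\tilde Y)=0$ and invoking K\"ahlerness plus Kodaira embedding to contradict $a(Y)=1$. The difference is in the middle step. The paper brings in the second five--term sequence \eqref{Lss4} and the count $b_1(Z\minus D)=r$ from Proposition~\ref{p:rs} to pin down $r\in\{s,s+1\}$, and then runs a three--case analysis (image of $\aaa$ one-- or two--dimensional, and in the latter case whether the extra component lies in $\Sigma$ or in $D$), matching each case against either \eqref{2ndbetti1} or the sharper \eqref{2ndbetti2}. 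You bypass all of this by feeding the crude bound $\dim H^0(Y\minus\Sigma,R^1f'_*\RR)\le 2$ directly into \eqref{b2Y0}, then opening up the proof of Proposition~\ref{p:b2} to use the exact identity \eqref{betti2} and the inequality \eqref{betti5}; the observation $b_2(E)=\sum_i\tilde\rho_i-b_2(\Sigma)$ makes $b_2(\Sigma)$ cancel, and $\sum_i(\tilde\rho_i-1)\le R$ yields $b_2(\tilde Y)\le R+3$ versus $b_2(\tilde Y)\ge R+2+2p_g(\tilde Y)$, hence $p_g(\tilde Y)=0$ with no case split. This buys you a shorter argument that never needs \eqref{rs2}, \eqref{Lss4}, or \eqref{2ndbetti2}; what the paper's route buys is that it stays at the level of the \emph{statements} \eqref{2ndbetti1}--\eqref{2ndbetti2} rather than reaching into their proof. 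Your final remarks are apt: the fiberwise bonus $\sum_i(\text{bonus for }N_i)=2n_e+n_u$ is exactly what Proposition~\ref{p:b2relest} and Corollary~\ref{cor:b2relY} already encode (each $\tilde F_i$ has $b_1\le 2$, forcing at most one non--rational-tree singularity per fiber), and your separate treatment of $\Sigma=\emptyset$ is a genuine point, since Proposition~\ref{p:aneq1} as stated does not assume $\chi_{\rm top}(Z)\neq 0$.
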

\begin{proof}
Suppose that $f$ is surjective. From Proposition \ref{p:b2bdd0}, 
putting $f':=f|_{Z\minus D}$, this means 
\begin{align}\label{dir11}
b_2(Y)
\le b_2(Y,Y\minus\Sigma) - b_1(Y\minus\Sigma)
+ \dim H^0\big(Y\minus\Sigma,R^1f'_*\RR \big).
\end{align}
Further, from Corollary \ref{cor:b2relY}, we have
$$
b_2(Y,Y\minus\Sigma) \le b_2(\Sigma)  + 2 n_e + n_u.
$$
Moreover, 
from \eqref{rs1} in Proposition~\ref{p:rs}, we have $b_1(Y\minus\Sigma) = s-1$. Hence, from~\eqref{dir11}, we obtain 
\begin{align}\label{dir3}
b_2(Y) \le  b_2(\Sigma) + 2n_e + n_u -s +1 + \dim H^0(Y\minus\Sigma,R^1f'_*\RR).
\end{align}
Recall that $D \subset Z$ is defined by $D=f\inv(\Sigma)$.
From the 5 term exact sequence associated with the Leray spectral sequence
\begin{align}\label{Lss3}
E_2^{p,q} = H^p(Y\minus\Sigma,R^qf'_*\RR) \Longrightarrow H^{p+q}(Z\minus D,\RR),
\end{align}
we obtain the exact sequence 
\begin{align}\label{Lss4}
0 \lras H^1(Y\minus\Sigma,\RR) \lras H^1(Z\minus D,\RR) \stackrel{\aaa}\lras 
H^0(Y\minus\Sigma,R^1f'_*\RR). 
\end{align}
From \eqref{rs2} in Proposition~\ref{p:rs}, this means $s-1\le b_1(Z\minus D) = r$ and also
\begin{align}\label{sler1}
r-(s-1) \le \dim H^0(Y\minus\Sigma,R^1f'_*\RR).
\end{align}
Since $\dim H^0(Y\minus\Sigma,R^1f'_*\RR)\le 2$ as $\Sigma$ includes the discriminant locus of $f$, combining with $s\le r$ (see \eqref{sler}), we obtain 
\begin{align}\label{sler2}
s\le r \le s+1.
\end{align}

The homomorphism $\aaa$ in \eqref{Lss4} cannot be the zero mapping because 
it would imply $r= s-1$, which contradicts \eqref{sler2}.
So $H^0(Y\minus\Sigma,R^1f'_*\RR)\neq 0$. Suppose that the image of $\aaa$ is 1-dimensional. From \eqref{Lss4} and Proposition \ref{p:rs}, this means $s=r$, which implies that all the entire fibers $\Sigma_1,\dots,\Sigma_s$ are irreducible.
Hence, $b_2(\Sigma) = s$, so from \eqref{dir3}, we obtain
\begin{align}\label{dir4}
b_2(Y) \le  2 n_e + n_u + 1 + \dim H^0(Y\minus\Sigma,R^1f'_*\RR).
\end{align}
On the other hand, letting $\mu:\tilde Y\rightarrow Y$ be the minimal resolution of all singularities of $Y$, from \eqref{2ndbetti1} in Proposition \ref{p:b2},
we always have 
\begin{align}\label{tbp}
b_2(Y)\ge 2p_g(\tilde Y) + 2 n_e +  n_u + 2.
\end{align}
Using \eqref{dir4}, this implies 
$$
2p_g(\tilde Y) + 1 \le \dim H^0(Y\minus\Sigma,R^1f'_*\RR).
$$
Since $ \dim H^0(Y\minus\Sigma,R^1f'_*\RR)\le 2$, this means $p_g(\tilde Y) = 0$.
Using that $\tilde Y$ admits a K\"ahler metric from Proposition \ref{prop:b1v} (ii), by the Kodaira embedding theorem, this implies that the surface $\tilde Y$ is projective algebraic; see \cite{GH}. Hence $a(Y)=2$, which contradicts the assumption $a(Y) = 1$.
Therefore, the image of $\aaa$ in \eqref{Lss4} is more than 1-dimensional.
It follows that $\dim H^0(Y\minus\Sigma,R^1f'_*\RR)=2$ and that $\aaa$ is surjective. So again from \eqref{Lss4}, $r= s+1$.
This means either 
\begin{itemize}
\item
all entire fibers $F_1,\dots, F_s$ in $\Sigma$ are irreducible and $f\inv(F_1),\dots, f\inv(F_s)$ have only one $2$-dimensional component except for one of these, which necessarily has exactly two $2$-dimensional components, or 
\item
all entire fibers $F_1,\dots, F_s$ are irreducible except for one, which consists of two irreducible components.
\end{itemize}

Suppose that the former is the case.
This includes the case where a fiber $f\inv(y)$ of $f$ has a 2-dimensional component for some $1\le i\le s$ and some point $y\in F_i$.
From the assumption, $b_2(\Sigma) = s$.
Hence, from \eqref{dir3}, as $\dim H^0(Y\minus\Sigma,R^1f'_*\RR)=2$ in the present situation,
$$
b_2(Y) \le 2n_e + n_u + 3.
$$
Again from \eqref{tbp}, this readily means $p_g(\tilde Y) = 0$, and this contradicts $a(Y)=1$ as above.

So suppose that the latter is the case.
In this case, since $b_2(\Sigma) = s+1$, \eqref{dir3} means
$$
b_2(Y) \le  2n_e + n_u +4.
$$
Also, as $\pi:Y\rightarrow \PP^1$ has a reducible fiber from the assumption, using the inequality \eqref{2ndbetti2},
\begin{align*}
b_2(Y)\ge 2p_g(\tilde Y)  + 2 n_e + n_u +  3.
\end{align*}
From these two inequalities, we obtain 
$2p_g(\tilde Y) \le 1$. This implies $p_g(\tilde Y) =0$,
which again contradicts $a(Y)=1$.
This means that the mapping $f:Z\rightarrow Y$ cannot be surjective.
\end{proof}

\bibliographystyle{amsalpha}
\bibliography{Fibrations}

\end{document}